\numberwithin{equation}{section}
\newtheorem{theorem}{Theorem}[section] 
\newtheorem{definition}[theorem]{Definition}
\newtheorem{proposition}[theorem]{Proposition}
\newtheorem{lemma}[theorem]{Lemma}
\newtheorem{remark}[theorem]{Remark}
\newcommand{\pt}{\partial_t}
\newcommand{\pnu}{\partial_\nu}
\title{An Insensitizing control problem involving tangential gradient terms for a reaction-diffusion equation with dynamic boundary conditions}
\author{
	Mauricio C. Santos\thanks{Department of Mathematics, Federal University of Para\'iba, UFPB, Jo\~{a}o Pessoa, pb cep 58051-900, Brazil, e-mail: mcsantos@mat.ufpb.br}
	\and 
	Nicol\'as Carre\~{n}o\thanks{Departamento de Matem\'atica, Universidad T\'ecnica Federico Santa Mar\'{\i}a, Casilla 110-V, Valpara\'{\i}so, Chile, e-mail: nicolas.carrenog@usm.cl.}
	\and 
	Roberto Morales\thanks{IMUS, Universidad de Sevilla, Campus Reina Mercedes, 41012 Sevilla, Spain, e-mail: rmorales1@us.es }
	}
\begin{document}
\maketitle

\begin{abstract}
In this article, we study the existence of insensitizing controls for a nonlinear reaction-diffusion equation with dynamic boundary conditions. Here, we have a partially unknown data of the system, and the problem consists in finding controls such that a specific functional is insensitive for small perturbations of the initial data. More precisely, the functional considered here depends on the norm of the state in a subset of the bulk together with the norm of the tangential gradient of the state on the boundary. This problem is equivalent to a (relaxed) null controllability problem for an optimality system of cascade type, with a zeroth-order coupling term in the bulk and a second-order coupling term on the boundary. To achieve this result, we linearize the system around the origin and analyze it by the duality approach and we prove a new Carleman estimate for the corresponding adjoint system. Then, a local null controllability result for the nonlinear system is proven by using an inverse function theorem.
\end{abstract}

\noindent {\bf Keyword:} Controllability, Heat equation, Dynamic boundary conditions.\\
{\bf MSC}(2020) 93B05, 35Q56, 93B07.  
%\tableofcontents

\section{Introduction}
Let $\Omega\subset \mathbb{R}^d$, $(d\geq 2)$ be a bounded domain with boundary $\Gamma:=\partial \Omega$ of class $C^2$. For $T>0$, we set $Q:=\Omega\times (0,T)$ and $\Sigma :=\Gamma\times (0,T)$. In addition, for each subsets $\omega \subset \Omega$ and $\mathcal{G}\subset \Gamma$, we shall write $Q_\omega :=\omega\times(0,T)$ and $\Sigma_\mathcal{G}:=\mathcal{G}\times (0,T)$.

Given the pair $(R,R_\Gamma)\in L^\infty(Q)\times L^\infty(\Sigma)$, we define the operators $L$ and $L_\Gamma$ by
\begin{align*}
	L(y)=\pt y-\Delta y+Ry\quad \text{and}\quad 
	L_\Gamma(y,y_\Gamma)=\pt y_\Gamma +\pnu y-\Delta_\Gamma y_\Gamma +R_\Gamma y_\Gamma ,
\end{align*}
where $\pnu $ denotes the normal derivative associated to the outward normal $\nu$ of $\Omega$ and $\Delta_\Gamma$ is the Laplace-Beltrami operator acting on $\Gamma$.
In this article, we consider the following reaction-diffusion equation with dynamic boundary conditions
\begin{align}
	\label{eq:intro:01}
	\begin{cases}
		L(y) +|y|^{p-1}y=f+\mathbbm{1}_\omega v&\text{ in }Q,\\
		L_\Gamma(y,y_\Gamma)+|y_\Gamma|^{q-1}y_\Gamma =f_\Gamma&\text{ on }\Sigma,\\
		y\big|_\Gamma=y_\Gamma&\text{ on }\Sigma,\\
		y(\cdot,0)=y^0+\tau_1 \tilde{y}^0&\text{ in }\Omega,\\
		y_\Gamma(\cdot,0)=y_\Gamma^0+\tau_2\tilde{y}_\Gamma^0&\text{ on }\Gamma.  
	\end{cases}
\end{align}

Here, $(y,y_\Gamma)$ is the state of the system, with $y_\Gamma$ being the trace of $y$ on $\Sigma$, $p,q>0$ denotes the order of the reaction, $v\in L^2(Q_\omega)$ is a distributed control function $(\omega\subset \Omega)$, $(f,f_\Gamma)\in L^2(Q)\times L^2(\Sigma)$ are known source terms and $(y^0,y_\Gamma^0)$, $(\hat{y}^0,\hat{y}_\Gamma^0)$ are the initial data and a small perturbation on the system, respectively.

We point out that the control $v\in L^2(Q_\omega)$ of the system \eqref{eq:intro:01} is acting only in the equation satisfied in a small subset of the bulk, i.e., the equation on the boundary is indirectly controlled by the side condition $y\big|_\Gamma=y_\Gamma$ on $\Sigma$. 

Given $\mathcal{O}\subseteq \Omega$ and $\mathcal{G}\subseteq \Gamma$ be two nonempty open subsets, we define the functional $\Phi$ (sometimes refered to the \textit{sentinel}) on the set of solutions of \eqref{eq:intro:01} as follows:
\begin{align}
	\label{def:Phi:insen}
	\Phi(y,y_\Gamma):=\dfrac{1}{2}\iint_{Q_\mathcal{O}} | y(x,t;\tau_1,\tau_2,v)|^2\, dx\,dt + \dfrac{1}{2}\iint_{\Sigma_{\mathcal{G}}} |\nabla_\Gamma y_\Gamma(x,t;\tau_1,\tau_2,v)|^2 \,dS\,dt,
\end{align}
where $(y,y_\Gamma)=(y,y_\Gamma)(x,t;\tau_1,\tau_2,v)$ is the corresponding solution of the system \eqref{eq:intro:01} associated to $\tau_1$, $\tau_2$ and $v$. We say that the couple $(\mathcal{O},\mathcal{G})$ is the \textit{observation domain}.

We emphasize that $\Phi$ is a quadratic functional which depends on the state of the system in the bulk and the tangential derivative of the system on the boundary.

This paper aims to establish the existence of insensitizing controls $v\in L^2(Q_\omega)$ with respect to $\Phi$ in the following sense: 
 
\begin{definition}
	%\label{def:insen}
	Let $(y^0,y_\Gamma^0)\in L^2(\Omega)\times L^2(\Gamma)$ and $(f,f_\Gamma)\in L^2(Q)\times L^2(\Sigma)$. We say that a control $u\in L^2(Q_\omega)$ insensitizes the functional $\Phi$ if the following condition holds
	\begin{align}
		\label{cond:insen:Phi}
		\dfrac{\partial \Phi }{\partial \tau_1} (\cdot,\cdot,\tau_1,\tau_2,u)\bigg|_{\tau_1=\tau_2=0}=\dfrac{\partial \Phi}{\partial \tau_2} (\cdot,\cdot,\tau_1,\tau_2,u)\bigg|_{\tau_1=\tau_2=0}=0, \quad \forall\, (\hat{y}^0,\hat{y}_\Gamma^0)\in L^2(\Omega)\times L^2(\Gamma),
	\end{align} 
	with $\|(\tilde{y}^0,\tilde{y}_\Gamma^0)\|_{L^2(\Omega)\times L^2(\Gamma)}=1$.
\end{definition}

We point out that the condition \eqref{cond:insen:Phi} denotes that the sentinel $\Phi$ does not detect small perturbations $(\tau \tilde{y}^0,\tau \tilde{y}_\Gamma^0)$ of the initial data $(y^0,y_\Gamma^0)$ on the observation domain. If such control $v\in L^2(Q_\omega)$ exists, we say that $\Phi$ is locally insensitive to small perturbations in the initial data $(y^0,y_\Gamma^0)$. In this case, we say that $v$ \textit{insensitizes} $\Phi$.

%%%%%%%%%%%%%%%%%%%%%%%%%%%%%%%%%%%%%%%%%%%%%%%%%%%%%

The parabolic equations of reactive-diffusive type and dynamic boundary conditions have been intensively studied for the last 20 years \cite{Anguiano2023On} \cite{Gal2012Sharp}, \cite{Anguiano2014Regularity}, \cite{Coclite2009Stability}, \cite{Favini2006Theheat} and the references therein. For a rigorous derivation of these models, we refer to \cite{goldstein2006Derivation} and \cite{gal2015Coleman-Gurtin}. One of the main characteristics of the model \eqref{eq:intro:01} is the presence of the surface diffusion term $\Delta_\Gamma y_\Gamma$, which acts as a damping force in reducing the complexity of the entire system. For a detailed discussion about this subject, see \cite{gal2015Therole}.  

The existence of insensitizing controls in PDEs was originally introduced by J.-L. Lions in \cite{lions1990quelques} and \cite{lions1992sentinelles} and was extended by several authors in different contexts. We mention \cite{bodart1995controls}, where the approximated insensitizing problem was addressed for a nonhomogeneous heat equation with Dirichlet boundary conditions. Besides, in \cite{deteresa2000Insensitizing}, L. de Teresa proved that the existence of insensitizing controls is not guaranteed for every initial data. Moreover, in this article it was proved the existence of insensitizing controls by using Global Carleman estimates introduced by Fursikov and Imanuvilov \cite{fursikov1996imanuvilov}. The advantage of using this approach lies in the fact that Carleman estimates can be used systematically to prove observability inequalities of coupled systems in cascade. These results were generalized later in \cite{bodart2004Existence}, \cite{bodart2004Alocalresult} and \cite{bodart2004Insensitizing} to nonlinearities with suitable superlinear growth and nonlinear terms depending on the state and its gradient. Furthermore, in \cite{deteresa2009Identification} the authors obtained results concerning the class of initial data that can be insensitized. 

We point out that the articles mentioned before consider only quadratic functionals to insensitize and where only the state of the solution is involved. In contrast, in \cite{guerrero2007null} the author consider a different functional which involes the gradient of the state of the linear heat equation. The main tool used is a suitable Carleman estimate of a coupled system with second-order coupling terms. These ideas have been applied to determine the existence of insensitizing controls with more sophisticatef functionals. We mention the work \cite{guerrero2007controllability} for the Stokes system, Navier-Stokes \cite{carreno2014insensitizing}, \cite{gueye2013insensitizing} and \cite{carreno2023Existence}, Boussinesq system \cite{carreno2015Insensitizing} and \cite{carreno2017Insensitizing}, and Ginzburg-Landau equation \cite{santos2019An}.

% Heat equation with dynamic boundary conditions

Regarding the controllability properties of parabolic equations with dynamic boundary conditions, we refer to \cite{maniar2017null} where the authors studied for the first time controllability properties of systems like \eqref{eq:intro:01} and prove a suitable observability inequality by employing Carleman estimates for such heat operator. After that, several articles have been considered generalizations of such equations     
\cite{chorfi2023boundary}, \cite{khoutaibi2020null}, \cite{khoutaibi2022nullsemilinear}, even in the case of fully discrete parabolic equations, see e.g. \cite{lecaros2023discrete}. Concerning insensitizing control problems for parabolic equations with dynamic boundary conditions, in \cite{zhang2019insen} the authors consider the $L^2$ version of \eqref{def:Phi:insen}, i.e., a functional of the form:
\begin{align}
	\label{def:mathcal:J:insen}
	\mathcal{J}(y,y_\Gamma):=\dfrac{1}{2}\iint_{Q_{\mathcal{O}}} | y(x,t;\tau_1,\tau_2,u)|^2 \,dx\,dt + \dfrac{1}{2}\iint_{\Sigma_{\mathcal{G}}} |y_\Gamma(x,t;\tau_1,\tau_2,u)|^2 \,dS\,dt.
\end{align}

In this case, the authors obtained the existence of insensitizing controls for \eqref{def:mathcal:J:insen} by proving a suitable observability inequality for the adjoint of the optimality system. We point out that in this case, such adjoint system is a coupled system in cascade where both coupling terms (in the bulk and on the boundary) are of zeroth order. Therefore, in that case, it is sufficient to apply the Carleman estimate given in \cite{maniar2017null} to each equation and eliminate the local terms where we do not have information. Unfortunately, when we try to insensitize the functional $\Phi$ defined in \eqref{def:Phi:insen} we cannot apply the results of \cite{zhang2019insen} directly since the coupling terms in the adjoint system associated to the optimality system are not the same, see \eqref{Coupled:system:nonlinear:y}-\eqref{Coupled:system:nonlinear:z}. Nevertheless, we will see that a new Carleman estimate can be performed to deal with such new difficulties.

%%%%%%%%%%%%%%%%%%%%%%%%%%%%%%%%%%%%%%%%%%%%%%%%%%%%%
\subsection{Functional setting}

Before going further, we shall need to introduce the notation and functional spaces used in this paper.

The boundary set $\Gamma=\partial \Omega$ can be seen as a $(d-1)$-dimensional compact Riemannian submanifold equipped by the Riemmanian metric induced by the natural embedding $\Gamma \subset \mathbb{R}^d$. %Moreover, we shall denote $dS$ the $(d-1)$-Lebesgue measure for $\Gamma$.

The tangential gradient $\nabla_\Gamma$ of $y_\Gamma$ at each point $x\in \Gamma$ can be seen as the projection of the standard Euclidean gradient $\nabla y$ on the tangent space of $\Gamma$, where $y_\Gamma$ is the trace of $y$ on $\Gamma$, i.e., 
\begin{align*}
	\nabla_\Gamma y_\Gamma=\nabla y-\nu \pnu y, 
\end{align*}
where $y=y_\Gamma$ on $\Gamma$ and $\pnu y$ is the normal derivative associated to the outward normal $\nu$. In this way, the tangential divergence $\nabla_\Gamma \cdot(\,)$ in $\Gamma$ is defined by
\begin{align*}
	\nabla_\Gamma \cdot(F_\Gamma):H^1(\Gamma) \to \mathbb{R},\quad y_\Gamma \mapsto -\int_\Gamma F_\Gamma \cdot \nabla_\Gamma y_\Gamma \,dS.
\end{align*}

The Laplace-Beltrami operator is given by $\Delta_\Gamma y_\Gamma=\nabla_\Gamma \cdot (\nabla_\Gamma y_\Gamma)$, for all $y_\Gamma \in H^2(\Gamma)$. In particular, we have the following identity, sometimes called the Surface Divergence Theorem:
\begin{align}
	\label{surface:divergence:theorem}
	\int_\Gamma \Delta_\Gamma y_\Gamma z_\Gamma \,dS=-\int_\Gamma \nabla_\Gamma y_\Gamma \cdot \nabla_\Gamma z_\Gamma \,dS,\quad \forall\, y_\Gamma \in H^2(\Gamma),\quad \forall\, z_\Gamma \in H^1(\Gamma).
\end{align}

For $1\leq p\leq +\infty$, we consider the Banach space $\mathbb{L}^p:=L^p(\Omega)\times L^p(\Gamma)$, endowed by the norm 
\begin{align*}
	\|(u,u_\Gamma)\|_{\mathbb{L}^p}:=\left(\|u\|_{L^p(\Omega)}^p + \|u_\Gamma\|_{L^p(\Gamma)}^p\right)^{1/p}\quad \forall \, (u,u_\Gamma)\in \mathbb{L}^p.
\end{align*} 

In particular, $\mathbb{L}^2$ is a Hilbert space equipped with the scalar product 
\begin{align*}
	((u,u_\Gamma),(v,v_\Gamma))_{\mathbb{L}^2}:=\int_\Omega uv\,dx + \int_\Gamma u_\Gamma v_\Gamma \,dS,\quad \forall\, (u,u_\Gamma),(v,v_\Gamma)\in \mathbb{L}^2.
\end{align*}

An immediate consequence of the $L^p$-theory is the following continuous embedding
\begin{align*}
	%\label{inclusion:Lp}
	\mathbb{L}^p \hookrightarrow \mathbb{L}^q\quad \forall\, 1\leq q\leq p\leq \infty.
\end{align*}

In particular, for $0<\theta<1$, let us consider $1\leq q_1<r_1<p_1\leq \infty$ satisfying
\begin{align*}
	\frac{1}{r_1}=\frac{\theta}{p_1}+\dfrac{1-\theta}{q_1}.
\end{align*}

Moreover, consider $1\leq p_2<r_2<q_2\leq \infty$ such that 
\begin{align*}
	\frac{1}{r_2}=\frac{\theta}{p_2}+\dfrac{1-\theta}{q_2}.
\end{align*}

Then, we have the following continuous embedding
\begin{align*}
	%\label{interpolation:Lp}
	L^{r_1}(0,T;\mathbb{L}^{r_2})\hookrightarrow L^{p_1}(0,T;\mathbb{L}^{p_2}) \cap L^{q_1}(0,T; \mathbb{L}^{q_2}).
\end{align*}

Moreover, there exists a constant $C>0$ such that following inequality holds 
\begin{align*}
	%\label{ineq:interpolation:Lp}
	\|(u,u_\Gamma)\|_{L^{r_1}(0,T;\mathbb{L}^{r_2})}\leq C \|(u,u_\Gamma)\|_{L^{p_1}(0,T;\mathbb{L}^{p_2})}^{\theta} \cdot \|(u,u_\Gamma)\|_{L^{q_1}(0,T;\mathbb{L}^{q_2})}^{1-\theta}. 
\end{align*}
for all $(u,u_\Gamma)\in L^{p_1}(0,T;\mathbb{L}^{p_2})\cap L^{p_2}(0,T;\mathbb{L}^{q_2})$.

For $k\in \mathbb{N}$, we also introduce the space 
\begin{align*}
	\mathbb{H}^k:=\{(y,y_\Gamma)\in H^k(\Omega)\times H^k(\Gamma)\,:\, y\big|_\Gamma =y_\Gamma\},
\end{align*}
where $H^k(\Omega)$ and $H^k(\Gamma)$ are the usual Sobolev spaces in $\Omega$ and $\Gamma$, respectively.

Since we are considering parabolic problems, for $T>0$ we introduce the Hilbert space 
\begin{align*}
	\mathbb{E}:=H^1(0,T;\mathbb{L}^2)\cap L^2(0,T;\mathbb{H}^2),
\end{align*}
endowing with his natural inner product. It is well-known that $C^0([0,T];\mathbb{H}^1) \hookrightarrow \mathbb{E}$ continuously. Moreover, $\mathbb{E}$ embeds compactly in $L^2(0,T;\mathbb{L}^2)$ (see for instance \cite[Proposition 2.2]{maniar2017null}).

For our purposes, we also introduce the Banach space 
\begin{align*}
	\mathbb{D}:=C^0([0,T];\mathbb{H}^1)\cap L^2(0,T;\mathbb{H}^2),
\end{align*}
endowed by its natural norm. Besides, we can also deduce interpolation results for the spaces $L^r(0,T;\mathbb{L}^s)$ and $L^r(0,T;\mathbb{H}^m)$ combining the results of Adams \cite[Theorem 4.12]{Adams2003Sobolev} and Lee \cite[Theorem 2.2]{Lee1987Yamabe}. 

On the other hand, in this article we are interested in the following situations:
\begin{itemize}
	\item If $d=2$, then $L^\infty(0,T;\mathbb{L}^r) \cap L^2(0,T;\mathbb{L}^\infty) \hookrightarrow \mathbb{D}$ continuously for all $1\leq r<+\infty$.
	\item If $d=3$, then $L^\infty(0,T;\mathbb{L}^6) \cap L^2(0,T;\mathbb{L}^\infty) \hookrightarrow \mathbb{D}$ continuously.
\end{itemize}  

%%%%%%%%%%%%%%%%%%%%%%%%%%%%%%%%%%%%%%%%%%%%%%%%%%%%%%%%%%%%%%%%%%%%%%%%%%%%%%%%%%%%%
\subsection{Main results}

The following assumptions will be needed throughout the paper:
\begin{itemize}
	\item[{\bf (H1)}] Suppose that $\omega \Subset \Omega$ and $\mathcal{O}\subseteq \Omega$ are open sets satisfying $\omega\cap \mathcal{O}\neq \emptyset$ and $\mathcal{G}\subseteq \Gamma$ is an arbitrary open set.
	\item[{\bf (H2)}] The coefficients $p,q>0$ in equation \eqref{eq:intro:01} satisfies the following conditions:
	\begin{itemize}
		\item[(i)] If $d=2$, then $\frac{5}{2}< p,q <+\infty$.
		\item[(ii)] If $d=3$, then $\frac{5}{2}< p,q\leq 4$.
	\end{itemize}
\end{itemize}

Then, the main result of this paper can be stated as follows:

\begin{theorem}
\label{Thm:main:result}
	Assume that $d=2$ or $d=2$ and suppose that conditions {\bf (H1)} and {\bf (H2)} hold. Assume that $(y^0,y_\Gamma^0)\equiv (0,0)$. Then, there exist constants $M_1>0$ and $\delta>0$ such that for any $(f,f_\Gamma)\in L^2(0,T;\mathbb{L}^2)$ satisfying
	\begin{align}
	\label{cond:f:fgamma}
		\|e^{\frac{M_1}{t}} (f,f_\Gamma)\|_{L^2(0,T;\mathbb{L}^2)}\leq \delta,
	\end{align}
	there is a control $v\in L^2(Q_\omega)$ that insensitizes the functional $\Phi$ defined by \eqref{def:Phi:insen}.
\end{theorem}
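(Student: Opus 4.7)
The plan is to follow the classical strategy of transforming the insensitizing problem into a (relaxed) null controllability problem for a cascade system. Formally differentiating $\Phi$ with respect to $\tau_1,\tau_2$ at zero and applying a duality argument, one introduces an adjoint state $(z,z_\Gamma)$ solving a backward parabolic problem with dynamic boundary conditions. After an integration by parts on $\Gamma$ (via the Surface Divergence Theorem \eqref{surface:divergence:theorem}), the tangential-gradient contribution to $\Phi$ is transferred onto the equation for $z_\Gamma$ as a localized second-order source of the form $-\nabla_\Gamma\!\cdot(\mathbbm{1}_\mathcal{G}\nabla_\Gamma y_\Gamma)$. The insensitizing condition \eqref{cond:insen:Phi} becomes equivalent to requiring $z(\cdot,0)=0$ in $\Omega$ and $z_\Gamma(\cdot,0)=0$ on $\Gamma$. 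Thus the task reduces to finding $v\in L^2(Q_\omega)$ such that the forward-backward optimality system for $(y,y_\Gamma,z,z_\Gamma)$, starting from $(y^0,y_\Gamma^0)=(0,0)$ with source $(f,f_\Gamma)$, satisfies $(z(\cdot,0),z_\Gamma(\cdot,0))=(0,0)$.

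I would next linearize this optimality system around the origin, which erases the nonlinearities $|y|^{p-1}y$ and $|y_\Gamma|^{q-1}y_\Gamma$. The resulting linear cascade carries a zeroth-order coupling from $y$ into $z$ in the bulk and, crucially, the \emph{second-order} boundary coupling noted above. By the classical duality principle, null controllability of this linear cascade is equivalent to an observability inequality for its adjoint, which is itself a cascade evolving in reverse time. The analytic core of the argument is therefore to establish a new Carleman inequality for this adjoint system, providing a global weighted estimate in terms of a single local observation, in $\omega$, of the adjoint variable conjugate to $z$.

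The main obstacle, and the principal novelty, is proving this Carleman estimate. One cannot simply apply the dynamic-boundary Carleman inequality of \cite{maniar2017null} to each equation separately and treat the couplings as perturbations, since the second-order boundary coupling is not a lower-order term relative to the tangential Laplace-Beltrami operator. My plan is to start from a Fursikov-Imanuvilov-type weight adapted to dynamic boundary conditions, whose trace is compatible with $\Delta_\Gamma$, and to carry out a weighted energy computation in which the second-order coupling is integrated by parts on $\Gamma$ using \eqref{surface:divergence:theorem}; the resulting tangential gradient terms are then absorbed into dominant interior terms by taking the Carleman parameters sufficiently large. Additional auxiliary weights and higher-regularity estimates for the second adjoint component are needed so that the $H^1(\Gamma)$-norm generated by the boundary coupling can be controlled, while the final local term is reduced to an observation in $\omega\cap\mathcal{O}$ (nonempty by hypothesis \textbf{(H1)}).

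Once the Carleman estimate is available, it yields null controllability of the linearized cascade in the weighted space dictated by the weight, with the natural source condition $e^{M_1/t}(f,f_\Gamma)\in L^2(0,T;\mathbb{L}^2)$ encoded in \eqref{cond:f:fgamma}. To return to the nonlinear problem I would apply an inverse function theorem of Liusternik-Graves type to a map $\mathcal{F}$ between suitable Banach spaces built on $\mathbb{E}$ and $\mathbb{D}$, whose differential at $0$ is exactly the linear controllability operator just constructed. Hypothesis \textbf{(H2)} on $p,q$ is precisely what ensures $C^1$-regularity of $\mathcal{F}$: using the embeddings of $L^r(0,T;\mathbb{L}^s)$-type stated in the functional-setting subsection (in particular $L^\infty(0,T;\mathbb{L}^r)\cap L^2(0,T;\mathbb{L}^\infty)\hookrightarrow \mathbb{D}$ in dimensions $d=2,3$), the superlinear nonlinearities $|y|^{p-1}y$, $|y_\Gamma|^{q-1}y_\Gamma$ become Fréchet differentiable between the chosen spaces. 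The smallness assumption \eqref{cond:f:fgamma} then supplies the perturbation regime in which the inverse function theorem applies, producing the insensitizing control $v\in L^2(Q_\omega)$ claimed in Theorem \ref{Thm:main:result}.
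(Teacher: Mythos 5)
Your proposal is correct and follows essentially the same route as the paper: reduction of the insensitizing condition to relaxed null controllability of the cascade optimality system \eqref{Coupled:system:nonlinear:y}--\eqref{Coupled:system:nonlinear:z}, linearization around the origin, a new Carleman estimate for the adjoint system \eqref{adjoint:system:psi}--\eqref{adjoint:system:varphi} with a single local observation of $\varphi$ in $\omega$, null controllability of the linear system in weighted spaces via Lax--Milgram, and finally the inverse mapping theorem (Theorem \ref{thm:Alexseev}) using \textbf{(H2)} to get $C^1$-regularity of the nonlinearities. The only (inessential) difference is in how the Carleman step is realized: rather than a from-scratch weighted energy computation absorbing the boundary coupling, the paper applies an existing Carleman estimate for sources in negative-order Sobolev spaces (Lemma \ref{Lemma:Carleman:phi:phi:gamma}) to the $\varphi$-equation, which accommodates the divergence-form term $\nabla_\Gamma\cdot(\mathbbm{1}_{\mathcal{G}}\nabla_\Gamma\psi_\Gamma)$ directly, and then eliminates the local $\psi$-term by integrating by parts against $\varphi$ using the adjoint equation.
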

%
%\begin{remark}
%	We point out that the Theorem \ref{Thm:main:result} provides the existence of insensitizing controls for the reaction-diffusion equation \eqref{eq:intro:01} with the initial condition $(y^0,y_\Gamma^0)\equiv (0,0)$. A study on the initial conditions for which the sentinel $\Phi$ is insensitive can be done following the steps in \cite{deteresa2009Identification}.
%\end{remark}

Now, let us introduce the formal adjoint operators of $L$ and $L_\Gamma$, respectively:
\begin{align*}
	L^*z:=-\pt z-\Delta z+Rz,\quad L_\Gamma^*(z,z_\Gamma):= -\pt z_\Gamma +\pnu z-\Delta_\Gamma z_\Gamma + R_\Gamma z_\Gamma. 
\end{align*}

It is well-known that the existence of insensitizing controls can be analyzed by a (relaxed) controllability problem for a suitable coupled system with a second-order coupling term. More precisely, our problem is equivalent to find a $v\in L^2(Q_\omega)$ such that the (optimality) system
\begin{align}
	\label{Coupled:system:nonlinear:y}
	\begin{cases}
		L(y) + |y|^{p-1}y=f+\mathbbm{1}_\omega v &\text{ in }Q,\\
		L_\Gamma(y,y_\Gamma)+|y_\Gamma|^{q-1}y_\Gamma =f_\Gamma &\text{ on }\Sigma,\\
		y\big|_\Gamma=y_\Gamma &\text{ on }\Sigma,\\
		(y(\cdot,0),y_\Gamma(\cdot,0))=(y^0,y_\Gamma^0) &\text{ in }\Omega\times \Gamma,
	\end{cases}
\end{align}
\begin{align}
	\label{Coupled:system:nonlinear:z}
	\begin{cases}
		L^*(z) + p|y|^{p-2}yz= \mathbbm{1}_\mathcal{O}y &\text{ in }Q,\\
		L_\Gamma^*(z,z_\Gamma) +q|y_\Gamma|^{q-2}y_\Gamma z_\Gamma=\nabla_\Gamma\cdot (\mathbbm{1}_\mathcal{G}\nabla_\Gamma y_\Gamma) &\text{ on }\Sigma,\\
		z\big|_\Gamma=z_\Gamma &\text{ on }\Sigma,\\
		(z(\cdot,T),z_\Gamma(\cdot,T))=(0,0)&\text{ in }\Omega\times \Gamma,
	\end{cases}
\end{align}
drives the state $(z,z_\Gamma)$ to the rest at time $t=0$.

Then, following the arguments presented in \cite[Proposition 1]{bodart1995controls}, we have:
\begin{theorem}
\label{thm:nonlinear:controllability:coupled:system}
Assume that $d=2$ or $d=3$ and suppose that {\bf (H1)} and {\bf (H2)} hold and $(y^0,y_\Gamma^0)=(0,0)$. Moreover, assume that there are constants $M_1,\delta>0$ such that $(f,f_\Gamma)$ satisfies the condition \eqref{cond:f:fgamma}. 
	If there exists a control $v\in L^2(Q_\omega)$ such that the corresponding solution $(y,y_\Gamma,z,z_\Gamma)$ of the coupled system \eqref{Coupled:system:nonlinear:y}-\eqref{Coupled:system:nonlinear:z} verifies 
	\begin{align*}
	%\label{cond:insenziting:01}
	z(\cdot,0)=0\text{ in }\Omega,\quad z_\Gamma(\cdot,0)=0\text{ on }\Gamma,
\end{align*}
then $v\in L^2 (Q_\omega)$ insensitizes the functional $\Phi$ in the sense of definition \eqref{def:Phi:insen}. The converse is also true.
\end{theorem}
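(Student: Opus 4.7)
The plan is to express the sensitivity derivatives $\partial_{\tau_i}\Phi|_{\tau_1=\tau_2=0}$ ($i=1,2$) as duality pairings between $(z(\cdot,0), z_\Gamma(\cdot,0))$ and the admissible perturbations $(\tilde y^0, \tilde y_\Gamma^0)$, so that the insensitizing condition \eqref{cond:insen:Phi} reduces to the null-controllability requirement $z(\cdot,0)\equiv 0$ and $z_\Gamma(\cdot,0)\equiv 0$. This is a Bodart--Fabre type reduction, and the work is to carry it out in the setting of dynamic boundary conditions with a sentinel involving tangential gradients.

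Let $(Y,Y_\Gamma)$ denote the solution of \eqref{Coupled:system:nonlinear:y} at $\tau_1=\tau_2=0$, and for $i=1,2$ set $(w^i,w^i_\Gamma):=\partial_{\tau_i}(y,y_\Gamma)\big|_{\tau_1=\tau_2=0}$. Since $p,q>5/2$ by {\bf (H2)}, the nonlinearities $|y|^{p-1}y$ and $|y_\Gamma|^{q-1}y_\Gamma$ are $C^1$, and formal differentiation of \eqref{eq:intro:01} shows that $(w^i,w^i_\Gamma)$ solves a linear parabolic problem with dynamic boundary conditions whose zeroth-order bulk and surface coefficients are precisely those appearing in \eqref{Coupled:system:nonlinear:z}, with initial data $(\tilde y^0,0)$ if $i=1$ and $(0,\tilde y_\Gamma^0)$ if $i=2$. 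Differentiating \eqref{def:Phi:insen} then gives
\begin{align*}
\frac{\partial\Phi}{\partial\tau_i}\bigg|_0 = \iint_{Q_\mathcal{O}} Y\, w^i\, dx\, dt + \iint_{\Sigma_\mathcal{G}} \nabla_\Gamma Y_\Gamma \cdot \nabla_\Gamma w^i_\Gamma\, dS\, dt,
\end{align*}
and by \eqref{surface:divergence:theorem} the second integral is rewritten as $-\iint_\Sigma \nabla_\Gamma\cdot(\mathbbm{1}_\mathcal{G}\nabla_\Gamma Y_\Gamma)\, w^i_\Gamma\, dS\, dt$, interpreted distributionally on $\Gamma$ in duality with $w^i_\Gamma \in L^2(0,T;H^1(\Gamma))$. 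The source terms of \eqref{Coupled:system:nonlinear:z} now appear on the right-hand side exactly as needed.

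The core is then a duality pairing: multiply the bulk (resp. surface) equation of \eqref{Coupled:system:nonlinear:z} by $w^i$ (resp. $w^i_\Gamma$) and integrate over $Q$ (resp. $\Sigma$). Integration by parts in time uses the terminal conditions $(z(\cdot,T),z_\Gamma(\cdot,T))=(0,0)$, so only the contributions at $t=0$ survive; Green's identity in $\Omega$ produces normal-derivative boundary integrals on $\Sigma$ that cancel against the $\pnu$ pieces already contained in $L_\Gamma$ and $L_\Gamma^*$; and \eqref{surface:divergence:theorem} transfers the Laplace-Beltrami term on $\Gamma$. Substituting the linearized equations for $(w^i,w^i_\Gamma)$ annihilates all volume and surface integrals carrying the zeroth-order coefficients, and the identity collapses to
\begin{align*}
\frac{\partial\Phi}{\partial\tau_i}\bigg|_0 = \int_\Omega z(\cdot,0)\, w^i(\cdot,0)\, dx + \int_\Gamma z_\Gamma(\cdot,0)\, w^i_\Gamma(\cdot,0)\, dS,
\end{align*}
which equals $\int_\Omega z(\cdot,0)\,\tilde y^0\, dx$ for $i=1$ and $\int_\Gamma z_\Gamma(\cdot,0)\,\tilde y_\Gamma^0\, dS$ for $i=2$. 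Consequently \eqref{cond:insen:Phi} holds for every perturbation $(\tilde y^0,\tilde y_\Gamma^0)$ of unit $\mathbb{L}^2$-norm if and only if $z(\cdot,0)=0$ in $\Omega$ and $z_\Gamma(\cdot,0)=0$ on $\Gamma$, delivering both implications of the theorem.

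The main obstacle I expect is the bookkeeping of boundary terms in the duality computation: one has to verify that the normal-derivative integrals generated by Green's formula in the bulk offset \emph{exactly} the $\pnu$ pieces built into the dynamic boundary operators $L_\Gamma$ and $L_\Gamma^*$, so that the pairing between the bulk and surface equations closes without leftover traces on $\Sigma$. A secondary technical point is to make rigorous sense of $\nabla_\Gamma\cdot(\mathbbm{1}_\mathcal{G}\nabla_\Gamma Y_\Gamma)$, which only lives in the dual of $H^1(\Gamma)$; this distributional reading is precisely why the equivalent controllability problem for \eqref{Coupled:system:nonlinear:y}--\eqref{Coupled:system:nonlinear:z} is referred to as \emph{relaxed}.
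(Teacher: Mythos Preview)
Your proposal is correct and is precisely the Bodart--Fabre duality reduction that the paper invokes by reference (the line ``following the arguments presented in \cite[Proposition~1]{bodart1995controls}'' immediately preceding the theorem); the paper does not spell this argument out. Be aware that the proof in Section~\ref{section:proof:main:theorem} carrying the label ``Proof of Theorem~\ref{thm:nonlinear:controllability:coupled:system}'' is in fact the local-inversion argument that establishes the \emph{existence} of a control $v$ driving $(z,z_\Gamma)$ to zero at $t=0$, i.e.\ the content of Theorem~\ref{Thm:main:result}; this is a labeling slip in the paper and not what you were asked to reproduce. Your computation---differentiate the state in $\tau_i$ to obtain the linearized system for $(w^i,w^i_\Gamma)$, differentiate $\Phi$, rewrite the tangential-gradient term via \eqref{surface:divergence:theorem}, and then pair with $(z,z_\Gamma)$ using Green's formula in $\Omega$ together with the built-in $\partial_\nu$ terms in $L_\Gamma,L_\Gamma^*$ so that all $\Sigma$-traces cancel---is exactly the intended adaptation of Bodart--Fabre to dynamic boundary conditions with a tangential-gradient sentinel.
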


The proof of Theorem \ref{thm:nonlinear:controllability:coupled:system} relies on a local inverse mapping theorem in Banach spaces. Therefore, we shall consider a linear system of the form 
\begin{align}
	\label{coupled:linear:system:y}
	\begin{cases}
		L(y)=f^0+\mathbbm{1}_\omega v &\text{ in }Q,\\
		L(y,y_\Gamma) =f_\Gamma^0 &\text{ in }\Sigma,\\
		y\big|_\Gamma=y_\Gamma &\text{ on }\Sigma,\\
		(y(\cdot,0),y_\Gamma(\cdot,0))=(y^0,y_\Gamma^0) &\text{ in }\Omega\times \Gamma,
	\end{cases}
\end{align}
\begin{align}
	\label{coupled:linear:system:z}
	\begin{cases}
		L^*(z)=f^1 + \mathbbm{1}_{\mathcal{O}}y &\text{ in }Q,\\
		L_\Gamma^*(z,z_\Gamma)=f_\Gamma^1 + \nabla_\Gamma\cdot (\mathbbm{1}_\mathcal{G} \nabla_\Gamma y_\Gamma)&\text{ on }\Sigma,\\
		z\big|_\Gamma=z_\Gamma &\text{ on }\Sigma,\\
		(z(\cdot,T),z_\Gamma(\cdot,T))=(0,0) &\text{ in }\Omega\times \Gamma.
	\end{cases}
\end{align}

Then, we prove that for any $(f^0,f_\Gamma^0),(f^1,f_\Gamma^1)$ in a suitable weighted space, the couple system \eqref{coupled:linear:system:y}-\eqref{coupled:linear:system:z} is null controllable at any time $T>0$. To do that, we will prove a new Carleman estimate for an associated state of \eqref{coupled:linear:system:y}-\eqref{coupled:linear:system:z}. Finally, we combine this result with an inverse mapping theorem to conclude the local null controllability of \eqref{Coupled:system:nonlinear:y}-\eqref{Coupled:system:nonlinear:z}.

The rest of the paper is organized as follows. In Section \ref{section:existence:results} we study existence and uniqueness results for systems in \eqref{coupled:linear:system:y}-\eqref{coupled:linear:system:z} and \eqref{Coupled:system:nonlinear:y}-\eqref{Coupled:system:nonlinear:z}. In Section \ref{section:Carleman:inequalities} we prove a new Carleman estimate for a parabolic system with dynamic boundary conditions. In Section \ref{section:Null:controllability}, we prove a null controllability result for the system \eqref{coupled:linear:system:y}-\eqref{coupled:linear:system:z} by duality arguments. In Section \ref{section:proof:main:theorem}, we prove the main result of the paper, i.e., Theorem \ref{Thm:main:result}. Finally, in Section \ref{section:further:comments}, additional comments related to our results are given.

%%%%%%%%%%%%%%%%%%%%%%%%%%%%%%%%%%%%%%%%%%%%%%%%%%%%%%%%%%%%%%%%%%%%%%%%%%%%%%%%%%

\section{Existence and uniqueness results}
\label{section:existence:results}
In this section, we provide some elementary existence and uniqueness results concerning systems of parabolic equations with dynamic boundary conditions.

\subsection{Elementary results}
We start recalling existence and uniqueness result on the linear heat equation with dynamic boundary conditions. Roughly speaking, it relies on the analyticity of the Wentzell Laplacian operator and the standard semigroup theory and it can be found in \cite{maniar2017null}. 

Consider the linear heat equation with dynamic boundary conditions: 
\begin{align}
	\label{eq:existence:01}
	\begin{cases}
		L(y)=h&\text{ in }Q,\\
		L_\Gamma(y,y_\Gamma)=h_\Gamma &\text{ on }\Sigma,\\
		y\big|_\Gamma=y_\Gamma &\text{ on }\Sigma,\\
		(y(\cdot,0),y_\Gamma(\cdot,0))=(y^0,y_\Gamma^0) &\text{ in }\Omega\times \Gamma.
	\end{cases}
\end{align}

We recall the space $\mathbb{E}:=H^1(0,T;\mathbb{L}^2)\cap L^2(0,T;\mathbb{H}^2)$.

\begin{definition}
	Suppose that $(h,h_\Gamma)\in L^2(0,T;\mathbb{L}^2)$ and $(y^0,y_\Gamma^0)\in \mathbb{L}^2$.
	\begin{itemize}
		\item A strong solution of \eqref{eq:existence:01} is a pair $(y,y_\Gamma)\in \mathbb{E}$ satisfying \eqref{eq:existence:01} in $L^2(0,T;\mathbb{L}^2)$.
		\item A distributional solution of \eqref{eq:existence:01} is a pair $(y,y_\Gamma)\in L^2(0,T;\mathbb{L}^2)$ such that for any $(\varphi,\varphi_\Gamma)\in \mathbb{E}$ with $(\varphi(\cdot,T),\varphi_\Gamma(\cdot,T))=(0,0)$, the following identity holds:
		\begin{align*}
			\begin{split}
				&\iint_{Q} yL^*(\varphi)\,dx\,dt + \iint_{\Sigma} y_\Gamma L^*_{\Gamma}(y,y_\Gamma) -\int_\Omega y^0\varphi(\cdot,0)\,dx
				-\int_\Gamma y_\Gamma^0\varphi_\Gamma(\cdot,0)\,dS \\
				&=\iint_{Q} h\varphi \,dx\,dt + \iint_{\Sigma} h_\Gamma \varphi_\Gamma \,dS\,dt
			\end{split}
		\end{align*} 
	\end{itemize}
\end{definition}

The next result establishes the existence and uniqueness of solutions $(y,y_\Gamma)$ of \eqref{eq:existence:01}.
\begin{proposition}[see \cite{maniar2017null}]
	\label{prop:existence:heat:eq}
	Suppose that $(h,h_\Gamma)\in L^2(0,T;\mathbb{L}^2)$.
	\begin{itemize}
		\item If $(y^0,y_\Gamma^0)\in \mathbb{L}^2$, then there exists a unique distributional solution $(y,y_\Gamma)\in C^0([0,T];\mathbb{L}^2)\cap L^2(0,T;\mathbb{H}^1)$ of \eqref{eq:existence:01}. Moreover, there exists a constant $C>0$ such that 
		\begin{align*}
			\begin{split} 
			&\|(y,y_\Gamma)\|_{C^0([0,T];\mathbb{L}^2)} + \|(y,y_\Gamma)\|_{L^2(0,T;\mathbb{H}^1)}\\
			\leq & C\|(y^0,y_\Gamma^0)\|_{\mathbb{L}^2} + C\|(h,h_\Gamma)\|_{L^2(0,T;\mathbb{L}^2)}.
			\end{split}
		\end{align*}
		\item If $(y^0,y_\Gamma^0)\in \mathbb{H}^1$, then there exists a unique strong solution $(y,y_\Gamma)\in \mathbb{E}$. Moreover, there exists a constant $C>0$ such that 
		\begin{align*}
			\|(y,y_\Gamma)\|_{\mathbb{E}}\leq C\|(y^0,y_\Gamma^0)\|_{\mathbb{H}^1} + C\|(h,h_\Gamma)\|_{L^2(0,T;\mathbb{L}^2)}.
		\end{align*}
		\item If $(y,y_\Gamma)\in C^0([0,T];\mathbb{L}^2)$ is a distributional solution and $(y^0,y_\Gamma^0)\in \mathbb{H}^1$, then it is a strong solution and we have $(y,y_\Gamma)\in \mathbb{E}$.
	\end{itemize}
\end{proposition}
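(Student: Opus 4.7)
The plan is to recast the system \eqref{eq:existence:01} as an abstract Cauchy problem $(y,y_\Gamma)'(t) + \mathcal{A}(y,y_\Gamma)(t) = (h,h_\Gamma)(t)$ on the Hilbert space $\mathbb{L}^2$, where the Wentzell-type operator $\mathcal{A}$ is defined by
\begin{equation*}
\mathcal{A}(u,u_\Gamma) := (-\Delta u + Ru,\ \pnu u - \Delta_\Gamma u_\Gamma + R_\Gamma u_\Gamma), \qquad D(\mathcal{A}) = \mathbb{H}^2.
\end{equation*}
The key observation is that $\mathcal{A}$ is the operator associated (in the sense of Kato/Lions) to the continuous sesquilinear form
\begin{equation*}
a((u,u_\Gamma),(v,v_\Gamma)) := \int_\Omega (\nabla u\cdot \nabla v + Ruv)\,dx + \int_\Gamma (\nabla_\Gamma u_\Gamma\cdot \nabla_\Gamma v_\Gamma + R_\Gamma u_\Gamma v_\Gamma)\,dS
\end{equation*}
on the form domain $V = \mathbb{H}^1$, as is seen by integrating by parts in the bulk and invoking the Surface Divergence Theorem \eqref{surface:divergence:theorem} on the boundary.

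Next, I would verify that $a$ is bounded on $\mathbb{H}^1\times \mathbb{H}^1$ (using $R\in L^\infty(Q)$, $R_\Gamma\in L^\infty(\Sigma)$) and satisfies a Gårding inequality $\mathrm{Re}\,a((u,u_\Gamma),(u,u_\Gamma)) + \lambda\|(u,u_\Gamma)\|^2_{\mathbb{L}^2}\geq \alpha\|(u,u_\Gamma)\|^2_{\mathbb{H}^1}$ for suitable $\lambda,\alpha>0$. Standard form theory then gives that $-\mathcal{A}$ is sectorial and generates an analytic $C_0$-semigroup $(e^{-t\mathcal{A}})_{t\geq 0}$ on $\mathbb{L}^2$. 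The mild-solution formula furnishes a candidate solution $(y,y_\Gamma)\in C^0([0,T];\mathbb{L}^2)$ for any $(y^0,y_\Gamma^0)\in \mathbb{L}^2$ and $(h,h_\Gamma)\in L^2(0,T;\mathbb{L}^2)$.

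To promote this to the asserted regularity I would proceed in two steps. For the first bullet, I would test the equation formally against $(y,y_\Gamma)$ and apply \eqref{surface:divergence:theorem} to obtain the energy identity
\begin{equation*}
\tfrac{1}{2}\|(y,y_\Gamma)(t)\|^2_{\mathbb{L}^2} + \int_0^t a((y,y_\Gamma),(y,y_\Gamma))\,ds = \tfrac{1}{2}\|(y^0,y_\Gamma^0)\|^2_{\mathbb{L}^2} + \int_0^t ((h,h_\Gamma),(y,y_\Gamma))_{\mathbb{L}^2}\,ds.
\end{equation*}
Coercivity of $a$ combined with Young and Grönwall yields both the $C^0([0,T];\mathbb{L}^2)$ and the $L^2(0,T;\mathbb{H}^1)$ bounds (a density argument via semigroup smoothing justifies the test). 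For the second bullet, I would invoke the maximal $L^2$-regularity of the analytic semigroup (recognizing $\mathbb{H}^1$ as the real interpolation space $(\mathbb{L}^2,D(\mathcal{A}))_{1/2,2}$), which gives directly $(y,y_\Gamma)\in H^1(0,T;\mathbb{L}^2)\cap L^2(0,T;D(\mathcal{A})) = \mathbb{E}$ with the stated estimate. The third bullet is then a uniqueness statement: given $(y^0,y_\Gamma^0)\in \mathbb{H}^1$, subtract the strong solution provided by the second bullet; the difference is a distributional solution of the homogeneous system with zero data, and testing against an arbitrary $(\varphi,\varphi_\Gamma)\in \mathbb{E}$ constructed by solving the backward strong problem with smooth final data shows the difference vanishes.

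The principal obstacle is verifying that the Wentzell-type operator generates an analytic semigroup on $\mathbb{L}^2$: the boundary condition couples the bulk normal derivative $\pnu y$ with the tangential Laplacian of the trace $y_\Gamma$, so the classical elliptic framework on $L^2(\Omega)$ does not apply and one must work on the product space $\mathbb{L}^2$ with the nontrivial form domain $\mathbb{H}^1$ (including the compatibility $u|_\Gamma = u_\Gamma$). Once sectoriality of the associated form is established through the Gårding estimate and the surface integration by parts, everything else follows from the well-developed machinery of analytic semigroups and sesquilinear forms, exactly as carried out in \cite{maniar2017null}.
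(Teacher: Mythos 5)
The paper offers no proof of this proposition; it simply cites \cite{maniar2017null} and remarks that the result ``relies on the analyticity of the Wentzell Laplacian operator and the standard semigroup theory.'' Your proposal reconstructs exactly that argument: the identification of the Wentzell operator with the sesquilinear form on $\mathbb{H}^1$ via the Surface Divergence Theorem, the G\aa rding estimate, analyticity, the energy identity for the first bullet, and maximal $L^2$-regularity for the second. This is the same route as the cited source, and the form--operator association you describe is correct (the $\pnu u$ boundary terms cancel precisely because the test pair satisfies $v|_\Gamma=v_\Gamma$).

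The one point you should tighten is that $R\in L^\infty(Q)$ and $R_\Gamma\in L^\infty(\Sigma)$ are time-dependent, so the form $a$ is non-autonomous and $-\mathcal{A}$ is not literally the generator of a single analytic semigroup. The standard fix is either to apply Lions' theorem for non-autonomous forms (which yields the first two bullets directly from boundedness and the uniform G\aa rding inequality), or to take the pure Wentzell Laplacian (without $R$, $R_\Gamma$) as the autonomous generator and absorb $(Ry,R_\Gamma y_\Gamma)$ into the source term, closing the estimates with Gr\"onwall. Either repair is routine and does not change the architecture of your argument; with that adjustment the proof is complete and consistent with what the paper delegates to \cite{maniar2017null}.
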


\subsection{The linear coupled system}

For each non-empty open sets $\mathcal{O}\subseteq \Omega$ and $\mathcal{G}\subseteq \Gamma$, consider the following linear coupled system in cascade:
\begin{align}
	\label{coupled:system:existence:linear:y}
	\begin{cases}
		L(y)=h^0&\text{ in }Q,\\
		L_\Gamma(y,y_\Gamma)=h_\Gamma^0&\text{ on }\Sigma,\\
		y\big|_\Gamma=y_\Gamma&\text{ on }\Sigma,\\
		(y(\cdot,0),y_\Gamma(\cdot,0))=(y^0,y_\Gamma^0)&\text{ in }\Omega\times \Gamma,
	\end{cases}
\end{align}

\begin{align}
	\label{coupled:system:existence:linear:z}
	\begin{cases}
		L^*(z)=h^1+\mathbbm{1}_{\mathcal{O}}y&\text{ in }Q,\\
		L_\Gamma^*(z,z_\Gamma)=h_\Gamma^0+\nabla_\Gamma \cdot (\mathbbm{1}_{\mathcal{G}} \nabla_\Gamma y_\Gamma)&\text{ on }\Sigma,\\
		z\big|_\Gamma=z_\Gamma&\text{ on }\Sigma,\\
		(z(\cdot,T),z_\Gamma(\cdot,T))=(z^0,z_\Gamma^0)&\text{ in }\Omega\times \Gamma.
	\end{cases}
\end{align}

\begin{definition}
	Suppose that $(h^0,h_\Gamma^0),(h^1,h_\Gamma^1)\in L^2(0,T;\mathbb{L}^2)$ and $(y^0,y_\Gamma^0),(z^0,z_\Gamma^0)\in \mathbb{L}^2$, we say that
	\begin{itemize}
		\item the couple $(y,y_\Gamma,z,z_\Gamma)\in \mathbb{E}$ is a strong solution if \eqref{coupled:system:existence:linear:y}-\eqref{coupled:system:existence:linear:z} is satisfied in $[L^2(0,T;\mathbb{L}^2)]^2$.
		\item $(y,y_\Gamma,z,z_\Gamma)\in [L^2(0,T;\mathbb{L}^2)]^2$ is a distributional solution of \eqref{coupled:system:existence:linear:y}-\eqref{coupled:system:existence:linear:z} if the following identity holds
		\begin{align*}
			\begin{split}
				&\iint_{Q}(yL^*\varphi + zL(\psi))\,dx\,dt + \iint_{\Sigma} (y_\Gamma L_\Gamma^*(\varphi,\varphi_\Gamma) +z_\Gamma L_\Gamma(\psi,\psi_\Gamma))\,dS\,dt\\
				&-\int_\Omega (y^0\varphi(\cdot,0)-z^0\psi(\cdot,0))\,dx
				-\int_{\Gamma} (y_\Gamma^0\varphi_\Gamma(\cdot,0) - z_\Gamma^0\psi_\Gamma(\cdot,T))\,dS\\
				=&\iint_Q (h^0\varphi + h^1\psi)\,dx\,dt + \iint_{\Sigma} (h_\Gamma^0 \varphi_\Gamma + h_\Gamma^1 \psi_\Gamma)\,dS\,dt\\
				&+\iint_{Q} y\mathbbm{1}_{\mathcal{O}} \psi \,dx\,dt + \iint_{\Sigma} y_\Gamma \nabla_\Gamma \cdot (\mathbbm{1}_{\mathcal{G}} \nabla_\Gamma \psi_\Gamma)\,dS\,dt
			\end{split}
		\end{align*} 
		holds for all $(\varphi,\varphi_\Gamma,\psi,\psi_\Gamma)\in \mathbb{E}^2$ with $(\varphi(\cdot,T),\varphi_\Gamma(\cdot,T))=(0,0)$ and $(\psi(\cdot,0),\psi_\Gamma(\cdot,0))=(0,0)$.
	\end{itemize}	
\end{definition}

Then, we have the following result:

\begin{proposition}
	\label{prop:existence:coupled:system}
	Suppose that $(h^0,h_\Gamma^0),(h^1,h_\Gamma^1)\in L^2(0,T;\mathbb{L}^2)$. In addition, let $\Omega\subseteq \Omega$ and $\mathcal{G}\subseteq \Gamma$ be two non-empty open sets.
	\begin{itemize}
		\item If $(y^0,y_\Gamma^0),(z^0,z_\Gamma^0)\in \mathbb{L}^2$, then there exists a unique distributional solution $(y,y_\Gamma,z,z_\Gamma)\in [C^0([0,T];\mathbb{L}^2)\cap L^2(0,T;\mathbb{H}^1)]^2$ of \eqref{coupled:system:existence:linear:y}-\eqref{coupled:system:existence:linear:z}. Moreover, there exists a constant $C>0$ such that 
		\begin{align*}
			&\|(y,y_\Gamma)\|_{C([0,T];\mathbb{L}^2)} + \|(z,z_\Gamma)\|_{C([0,T];\mathbb{L}^2)}\\
			\leq  &C\|(h^0,h_\Gamma^0)\|_{L^2(0,T;\mathbb{L}^2)} + C\|(h^1,h_\Gamma^1)\|_{L^2(0,T;\mathbb{L}^2)}+C\|(y^0,y_\Gamma^0)\|_{\mathbb{L}^2} + C\|(z^0,z_\Gamma^0)\|_{\mathbb{L}^2}.
		\end{align*}
		\item If $(y^0,y_\Gamma^0),(z^0,z_\Gamma^0)\in \mathbb{H}^1$, then there exists a unique weak solution $(y,y_\Gamma,z,z_\Gamma)\in \mathbb{E}^2$. Moreover, there exists a constant $C>0$ such that 
		\begin{align}
			\label{energy:estimate:H1}
			\begin{split} 
			&\|(y,y_\Gamma)\|_{\mathbb{E}} + \|(z,z_\Gamma)\|_{\mathbb{E}}\\
			\leq  &C\|(h^0,h_\Gamma^0)\|_{L^2(0,T;\mathbb{L}^2)} + C\|(h^1,h_\Gamma^1)\|_{L^2(0,T;\mathbb{L}^2)}+C\|(y^0,y_\Gamma^0)\|_{\mathbb{H}^1} + C\|(z^0,z_\Gamma^0)\|_{\mathbb{H}^1}.
			\end{split} 
		\end{align}
		\item If $(y,y_\Gamma,z,z_\Gamma)$ is a distributional solution and $(y^0,y_\Gamma^0),(z^0,z_\Gamma^0)\in \mathbb{H}^1$, then it is a strong solution with $(y,y_\Gamma,z,z_\Gamma)\in \mathbb{E}^2$. 
	\end{itemize}
\end{proposition}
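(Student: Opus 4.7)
The system \eqref{coupled:system:existence:linear:y}--\eqref{coupled:system:existence:linear:z} has a \emph{one-way cascade} structure: equation \eqref{coupled:system:existence:linear:y} is decoupled from $(z,z_\Gamma)$, while equation \eqref{coupled:system:existence:linear:z} depends on $(y,y_\Gamma)$ only as a source. The plan is to solve it in two sequential steps. First, I would apply Proposition~\ref{prop:existence:heat:eq} directly to \eqref{coupled:system:existence:linear:y}, which immediately yields the announced regularity and quantitative bounds for $(y,y_\Gamma)$ in both cases (distributional solution in $C^0([0,T];\mathbb{L}^2)\cap L^2(0,T;\mathbb{H}^1)$ for $\mathbb{L}^2$-data, strong solution in $\mathbb{E}$ for $\mathbb{H}^1$-data).

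Second, I would treat \eqref{coupled:system:existence:linear:z} with $(y,y_\Gamma)$ now fixed as data. After the time reversal $t\mapsto T-t$ this becomes a forward parabolic problem; the bulk source $h^1+\mathbbm{1}_\mathcal{O}y$ lies in $L^2(0,T;L^2(\Omega))$ by Step~1 and causes no difficulty. The main obstacle is the boundary source $\nabla_\Gamma\cdot(\mathbbm{1}_\mathcal{G}\nabla_\Gamma y_\Gamma)$: even when $y_\Gamma\in L^2(0,T;H^2(\Gamma))$, the jump of $\mathbbm{1}_\mathcal{G}$ across $\partial\mathcal{G}$ prevents $\mathbbm{1}_\mathcal{G}\nabla_\Gamma y_\Gamma$ from belonging to $L^2(0,T;H^1(\Gamma))$, so its tangential divergence is only an element of the dual of $L^2(0,T;H^1(\Gamma))$. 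Proposition~\ref{prop:existence:heat:eq} does not apply off the shelf. I would bypass this by regularizing $\mathbbm{1}_\mathcal{G}$ by smooth cutoffs $\chi_n\in C^\infty(\Gamma)$ with $0\le\chi_n\le 1$ and $\chi_n\to\mathbbm{1}_\mathcal{G}$ pointwise; for each $n$ the boundary source lies in $L^2(0,T;L^2(\Gamma))$, so Proposition~\ref{prop:existence:heat:eq} produces approximate solutions $(z^n,z_\Gamma^n)$ with the claimed regularity.

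To close the argument I would derive $n$-uniform estimates by a standard multiplier argument: multiply the bulk equation by $z^n$, the boundary equation by $z_\Gamma^n$, add, and integrate on $\Omega\times(t,T)$, using the trace matching $z^n|_\Gamma=z_\Gamma^n$, the Green identity, and the surface divergence theorem \eqref{surface:divergence:theorem}. The critical boundary term becomes
\[
\iint_\Sigma z_\Gamma^n\,\nabla_\Gamma\cdot(\chi_n\nabla_\Gamma y_\Gamma)\,dS\,dt
=-\iint_\Sigma \chi_n\,\nabla_\Gamma y_\Gamma\cdot\nabla_\Gamma z_\Gamma^n\,dS\,dt,
\]
which is controlled by $\varepsilon\|\nabla_\Gamma z_\Gamma^n\|_{L^2(\Sigma)}^2+C_\varepsilon\|\nabla_\Gamma y_\Gamma\|_{L^2(\Sigma)}^2$ independently of $n$, so the first piece can be absorbed into the dissipation coming from $-\Delta_\Gamma z_\Gamma^n$. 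The remaining terms are standard and Gronwall's inequality yields a uniform bound in $C^0([0,T];\mathbb{L}^2)\cap L^2(0,T;\mathbb{H}^1)$. Weak compactness, together with dominated convergence on $\chi_n$, then lets me pass to the limit and identify a distributional solution satisfying the estimate of the first bullet.

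For the second bullet in the $\mathbb{H}^1$ setting, I would rerun the same regularization with a higher-order test (against $\pt z_\Gamma^n$, or equivalently an energy argument at the $\mathbb{H}^1$-level) that exploits the improved regularity $y_\Gamma\in L^2(0,T;H^2(\Gamma))$ from Step~1 to reach the $\mathbb{E}^2$-bound \eqref{energy:estimate:H1}. Uniqueness in both cases follows from linearity: the difference of two solutions satisfies the homogeneous system, and the Gronwall estimate above forces it to vanish. The third bullet then is an immediate consequence of this uniqueness, since any distributional solution with $\mathbb{H}^1$-data must coincide with the strong one just constructed. I expect the main technical point to be the $\mathbb{E}^2$-regularity in the second bullet: the boundary source is genuinely distributional, and pushing the energy estimate up to the $\mathbb{H}^2$-level requires that the sensitive pairing $\chi_n\nabla_\Gamma y_\Gamma\cdot\nabla_\Gamma z_\Gamma^n$ always be absorbable against the dissipation produced by the Laplace--Beltrami term, uniformly in the approximation parameter.
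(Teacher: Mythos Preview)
Your approach is essentially the same as the paper's: exploit the cascade structure to solve \eqref{coupled:system:existence:linear:y} by Proposition~\ref{prop:existence:heat:eq}, then feed $(y,y_\Gamma)$ as data into \eqref{coupled:system:existence:linear:z} and use the surface divergence theorem \eqref{surface:divergence:theorem} to handle the boundary coupling term. The paper's proof is only this one-sentence sketch (``details are left to the reader''); your regularization of $\mathbbm{1}_\mathcal{G}$ and the ensuing uniform energy estimates are a sensible way to supply those details, and your caution about the $\mathbb{E}^2$-bound in the second bullet---where the boundary source is genuinely distributional---is well placed, since the paper does not comment on this point.
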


The proof of Proposition \ref{prop:existence:coupled:system} follows easily from Proposition \ref{prop:existence:heat:eq}, the cascade coupling structure of \eqref{coupled:system:existence:linear:y}-\eqref{coupled:system:existence:linear:z} and the use of the Surface Divergence Theorem \eqref{surface:divergence:theorem}. The details are left to the reader.

\subsection{The nonlinear coupled system}

The remainder of this section will be devoted to establish existence and uniqueness results of the nonlinear heat system of reactive-diffusive type with dynamic boundary conditions. 

Now, consider a nonlinear system of the form
\begin{align}
	\label{system:nonlinear:01}
	\begin{cases} 
	L(y)+|y|^{\sigma-1}y=h^0&\text{ in }Q,\\
	L_\Gamma(y,y_\Gamma)+|y|^{\kappa-1}y_\Gamma=h_\Gamma^0&\text{ on }\Sigma,\\
	y\big|_\Gamma=y_\Gamma&\text{ on }\Sigma,\\
	(y(\cdot,0),y_\Gamma(\cdot,0)) =(y^0,y_\Gamma^0)&\text{ in }\Omega\times \Gamma,
	\end{cases}
\end{align}
\begin{align}
	\label{system:nonlinear:02}
	\begin{cases}
		L^*(z)+|y|^{\sigma-2}yz=h^1+\mathbbm{1}_{\mathcal{O}}y&\text{ in }Q,\\
		L_\Gamma^*(z,z_\Gamma)+|y_\Gamma|^{\kappa-2}y_\Gamma z_\Gamma=h_\Gamma^1 + \nabla_\Gamma\cdot \left(\mathbbm{1}_{\mathcal{G}}\nabla_\Gamma y_\Gamma  \right)&\text{ on }\Sigma,\\
		z\big|_\Gamma=z_\Gamma&\text{ on }\Sigma,\\
		(z(\cdot,T),z_\Gamma(\cdot,T))=(z^0,z_\Gamma^0)&\text{ in }\Omega\times \Gamma.
	\end{cases}
\end{align}

For our purposes, we consider the following conditions:
\begin{itemize}
	\item[{\bf (A1)}] The sets $\mathcal{O}\subseteq \Omega$ and $\mathcal{G}\subseteq \Gamma$ are two non-empty open subsets.
	\item[{\bf (A2)}] The parameters $\sigma$ and $\kappa$ satisfies the following conditions:
	\begin{itemize}
		\item[(i)] If $d=2$, then $ \frac{9}{4}\leq \sigma,\kappa< \infty$.
		\item[(ii)] If $d=3$, then $\frac{9}{4}\leq \sigma,\kappa\leq 4$. 
	\end{itemize} 
\end{itemize}

We recall that $\mathbb{D}:=C^0([0,T];\mathbb{H}^1)\cap L^2(0,T;\mathbb{H}^2)$. Then, we have the following result:

\begin{proposition}
	\label{prop:existence:nonlinear}
	Assume that $d=2$ or $d=3$.  and suppose that {\bf (A1)} and {\bf (A2)} hold. Then, there exists $r>0$ and $\epsilon>0$ such that for every $(h^0,h_\Gamma^1),(h^1,h_\Gamma^1)\in L^2(0,T;\mathbb{L}^2)$ and $(y^0,y_\Gamma^0),(z^0,z_\Gamma^0)\in \mathbb{H}^1$ satisfying
	\begin{align}
		\label{assumption:smallness:data}
		\|(h^0,h_\Gamma^0)\|_{L^2(0,T;\mathbb{L}^2)}+\|(h^1,h_\Gamma^1)\|_{L^2(0,T;\mathbb{L}^2)} + \|(y^0,y_\Gamma^0)\|_{\mathbb{H}^1}+\|(z^0,z_\Gamma^0)\|_{\mathbb{L}^2}\leq \epsilon,
	\end{align}
	there exists a unique solution $(y,y_\Gamma,z,z_\Gamma)\in \mathbb{D}^2$ of \eqref{system:nonlinear:01}-\eqref{system:nonlinear:02} with $\|(y,y_\Gamma,z,z_\Gamma)\|_{\mathbb{D}^2}\leq r$.
\end{proposition}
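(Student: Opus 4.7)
The plan is a Banach fixed-point argument in a small closed ball of $\mathbb{D}^2$. Treating the nonlinearities of \eqref{system:nonlinear:01}--\eqref{system:nonlinear:02} as inhomogeneous source terms, I would define a map $\Lambda:\mathbb{D}^2\to \mathbb{D}^2$ by $\Lambda(\bar y,\bar y_\Gamma,\bar z,\bar z_\Gamma):=(y,y_\Gamma,z,z_\Gamma)$, where $(y,y_\Gamma,z,z_\Gamma)$ is the unique strong solution (provided by Proposition \ref{prop:existence:coupled:system} thanks to the $\mathbb{H}^1$ regularity of the initial data) of the linear cascade system obtained from \eqref{system:nonlinear:01}--\eqref{system:nonlinear:02} by replacing $|y|^{\sigma-1}y$, $|y_\Gamma|^{\kappa-1}y_\Gamma$, $|y|^{\sigma-2}yz$ and $|y_\Gamma|^{\kappa-2}y_\Gamma z_\Gamma$ by their frozen versions $|\bar y|^{\sigma-1}\bar y$, $|\bar y_\Gamma|^{\kappa-1}\bar y_\Gamma$, $|\bar y|^{\sigma-2}\bar y\,\bar z$ and $|\bar y_\Gamma|^{\kappa-2}\bar y_\Gamma\, \bar z_\Gamma$. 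Any fixed point of $\Lambda$ is the sought solution of \eqref{system:nonlinear:01}--\eqref{system:nonlinear:02}.

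The first task is to bound these nonlinear sources in $L^2(0,T;\mathbb{L}^2)$ by powers of the $\mathbb{D}^2$-norm of the input. I would rely on the embeddings recalled at the end of Section 1.1, namely $\mathbb{D}\hookrightarrow L^\infty(0,T;\mathbb{L}^r)\cap L^2(0,T;\mathbb{L}^\infty)$ (any finite $r$ if $d=2$ and $r=6$ if $d=3$), together with the interpolation inequality of the functional setting. In dimension three the critical case is $\sigma=\kappa=4$, where interpolation yields $\mathbb{D}\hookrightarrow L^{8}(0,T;\mathbb{L}^{8})$, precisely the regularity placing $|\bar y|^{\sigma-1}\bar y$ in $L^2(Q)$; condition \textbf{(A2)} is designed for this threshold, and a symmetric argument on $\Sigma$ handles $|\bar y_\Gamma|^{\kappa-1}\bar y_\Gamma$. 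For the cross term, a H\"older splitting combined with the same interpolation yields
\[
\||\bar y|^{\sigma-2}\bar y\,\bar z\|_{L^2(Q)}\le C\,\|\bar y\|_{\mathbb{D}}^{\sigma-1}\|\bar z\|_{\mathbb{D}},
\]
and the analogue on $\Sigma$, the lower bound $\sigma,\kappa\ge 9/4$ being used to keep all H\"older exponents within the admissible range.

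Combining these estimates with the linear stability estimate \eqref{energy:estimate:H1} gives
\[
\|\Lambda(\bar y,\bar y_\Gamma,\bar z,\bar z_\Gamma)\|_{\mathbb{D}^2}\le C\bigl(\epsilon+ r^{\sigma}+r^{\kappa}+r^{\sigma}+r^{\kappa}\bigr)
\]
whenever $\|(\bar y,\bar y_\Gamma,\bar z,\bar z_\Gamma)\|_{\mathbb{D}^2}\le r$, with $\epsilon$ controlling the data as in \eqref{assumption:smallness:data}. Since $\sigma,\kappa>1$, first choosing $r$ small and then $\epsilon$ small ensures the right-hand side is $\le r$, so $\Lambda$ stabilizes the closed ball $B_r\subset\mathbb{D}^2$. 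For the contraction estimate I would use the elementary pointwise bounds $\bigl||a|^{s}a-|b|^{s}b\bigr|\le C_s(|a|^{s}+|b|^{s})|a-b|$, split the difference of the cross terms as $|a|^{s-1}a\, w_1-|b|^{s-1}b\, w_2=(|a|^{s-1}a-|b|^{s-1}b)w_1+|b|^{s-1}b(w_1-w_2)$ (legitimate since $\sigma,\kappa\ge 2$), and rerun the same H\"older-interpolation estimates on the differences. This produces a Lipschitz constant of order $r^{\min(\sigma,\kappa)-1}$, which becomes strictly smaller than $1/2$ after shrinking $r$ once more; Banach's fixed-point theorem then yields a unique $(y,y_\Gamma,z,z_\Gamma)\in B_r$.

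The main obstacle is the cross term $|\bar y|^{\sigma-2}\bar y\,\bar z$ and its boundary counterpart, which couples the two unknowns of the cascade already at the fixed-point level. In contrast with the $L^2$ sentinel treated in \cite{zhang2019insen}, whose optimality system only has zeroth-order couplings and hence no such quadratic interaction in the iteration, here one must split the interpolated regularity of $\bar y$ between the factor $|\bar y|^{\sigma-1}$ and a paired companion exponent for $\bar z$, and it is exactly this balance that dictates the narrow window $9/4\le \sigma,\kappa\le 4$ when $d=3$ in hypothesis \textbf{(A2)}.
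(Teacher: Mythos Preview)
Your proposal is correct and follows essentially the same route as the paper: a Banach fixed-point argument on a small ball of $\mathbb{D}^2$, with the nonlinearities frozen and thrown to the right-hand side, the linear estimate \eqref{energy:estimate:H1} invoked, and the sources controlled via the embedding $\mathbb{D}\hookrightarrow L^{8}(0,T;\mathbb{L}^{8})$ together with H\"older splittings (exponents $8/3$ and $8$ for the cross term, and $4$, $8$, $8$ for its difference). Your contraction estimate, the telescoping of the cross term, and the resulting Lipschitz constant of order $r^{\min(\sigma,\kappa)-1}$ all match the paper's proof; the lower bound $\sigma,\kappa\ge 9/4$ indeed enters exactly where you indicate, to keep $4(\sigma-2)\ge 1$ when estimating $\||\bar y|^{\sigma-2}\|_{L^4(Q)}$.
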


\begin{proof}
	We will argue using fixed-point arguments. Given $(u,u_\Gamma,v,v_\Gamma)\in \mathbb{D}^2$, we consider the linear system:
	\begin{align}
		\label{system:nonlinear:03}
		\begin{cases}
			L(y)=h^0-|u|^{\sigma-1}u &\text{ in }Q,\\
			L_\Gamma(y,y_\Gamma)=h_\Gamma^0-|u_\Gamma|^{\kappa-1}u_\Gamma &\text{ on }\Sigma,\\
			y\big|_\Gamma=y_\Gamma&\text{ on }\Sigma,\\
			(y(\cdot,0),y_\Gamma(\cdot,0))=(y^0,y_\Gamma^0)&\text{ in }\Omega\times \Gamma.
		\end{cases}
	\end{align}
	\begin{align}
		\label{system:nonlinear:04}
		\begin{cases}
			L^*(z)=h^1+\mathbbm{1}_{\mathcal{O}}y-|u|^{\sigma-2}uv&\text{ in }Q,\\
			L_\Gamma^*(z,z_\Gamma)=h_\Gamma^1+\nabla_\Gamma\cdot(\mathbbm{1}_{\mathcal{G}} \nabla_\Gamma y_\Gamma)-|u_\Gamma|^{\kappa-2}u_\Gamma v_\Gamma&\text{ on }\Sigma,\\
			z\big|_\Gamma=z_\Gamma&\text{ on }\Sigma,\\
			(z(\cdot,T),z_\Gamma(\cdot,T))=(z^0,z_\Gamma^0)&\text{ in }\Omega\times \Gamma.
		\end{cases}
	\end{align} 
	
	In addition, let us define the mapping $\mathcal{F}:\mathbb{D}^2\to \mathbb{D}^2$ by
	\begin{align*}
		\mathcal{F}(u,u_\Gamma,v,v_\Gamma)=(y,y_\Gamma,z,z_\Gamma),\quad \forall\, (u,u_\Gamma,v,v_\Gamma)\in \mathbb{D}^2,
	\end{align*}
	where $(y,y_\Gamma,z,z_\Gamma)$ is the unique solution of \eqref{system:nonlinear:03}-\eqref{system:nonlinear:04} associated to $(u,u_\Gamma,v,v_\Gamma)$. 
	Now, it is clear that $(y,y_\Gamma,z,z_\Gamma)\in \mathbb{D}^2$ is a solution of \eqref{system:nonlinear:03}-\eqref{system:nonlinear:04} if and only if it is a fixed point of $\mathcal{F}$.
	
	Our next task is to show that there exists $0<r<1$ such that the restriction of $\mathcal{F}$ to the closed ball $\mathbb{B}_r:=\{(y,y_\Gamma,z,z_\Gamma)\in \mathbb{D}^2\,:\, \|(y,y_\Gamma,z,z_\Gamma)\|_{\mathbb{D}^2}\leq r \}$ is a contraction from $\mathbb{B}_r$ into $\mathbb{B}_r$.
	
	In the following computations, we just consider the case $d=3$. The case $d=2$ is analogous and is left to the reader. Let $(u,u_\Gamma,v,v_\Gamma)\in \mathbb{D}^2$. Then, by Proposition \ref{prop:existence:coupled:system}, there is a unique solution $(y,y_\Gamma,z,z_\Gamma)$ of \eqref{system:nonlinear:03}-\eqref{system:nonlinear:04}. Moreover, from the energy estimate \eqref{energy:estimate:H1}, we have 
	\begin{align}
		\label{estimate:mathcal:F}
		\begin{split} 
		&\|\mathcal{F}(u,u_\Gamma,v,v_\Gamma)\|_{\mathbb{D}^2}\\
		\leq & C(\|(h^0,h_\Gamma^0)\|_{L^2(0,T;\mathbb{L}^2)} + \|(h^1,h_\Gamma^1)\|_{L^2(0,T;\mathbb{L}^2)} + \|(y^0,y_\Gamma^0)\|_{\mathbb{H}^1} + \|(z^0,z_\Gamma^0)\|_{\mathbb{H}^1}\\
		&+\|(|u|^\sigma,|u_\Gamma|^\kappa)\|_{L^2(0,T;\mathbb{L}^2)} + \|(|u|^{\sigma-1}v,|u_\Gamma|^{\kappa-1}v_\Gamma)\|_{L^2(0,T;\mathbb{L}^2)})
		\end{split}
	\end{align}
	
	Let us estimate the last two terms of \eqref{estimate:mathcal:F}. By {\bf (A2)}, following continuous embedding:
	\begin{align*}
		L^8(Q) \hookrightarrow L^{2\sigma}(Q),\text{ and } L^{8}(\Sigma)\hookrightarrow L^{2\kappa}(\Sigma).
	\end{align*}
	
	Then, we have
	\begin{align}
		\label{estimate:mathcal:F:02} 
		\|(|u|^\sigma,|u_\Gamma|^{\kappa})\|_{L^2(0,T;\mathbb{L}^2)}%:=& \|u\|_{L^{2\sigma}(Q)}^{\sigma} + \|u_\Gamma\|_{L^{2\kappa}(\Sigma)}^{\kappa}\\
		\leq  C \left(\|u\|_{L^8(Q)}^\sigma + \|u\|_{L^8(\Sigma)}^{\kappa} \right) 
		%\leq & C\|u\|_{L^\infty(0,T;L^6(\Omega))}^{2\sigma} + C\|u_\Gamma\|_{L^\infty(0,T;L^6(\Gamma))}^{2\kappa}
		%\leq &C\|(u,u_\Gamma)\|_{\mathbb{D}}^{2\max\{\sigma,\kappa\}}.
	\end{align}
	
	On the other hand, by H\"older's inequality, the last term of the right-hand side can be computed as 
	\begin{align*}
		\|(|u|^{\sigma-1}v,|u_\Gamma|^{\kappa-1}v_\Gamma)\|_{L^2(0,T;\mathbb{L}^2)}%=&\|u^{\sigma-1}v\|_{L^2(0,T;L^2(\Omega))} + \|u_\Gamma^{\kappa-1}v_\Gamma\|_{L^2(0,T;L^2(\Gamma))}\\
		\leq & \|u^{\sigma-1}\|_{L^{8/3}(Q)} \|v\|_{L^8(Q)} + \|u_\Gamma^{\kappa-1}\|_{L^{8/3}(\Sigma)} \|v_\Gamma\|_{L^8(\Sigma)}\\
		=& \|u\|_{L^{\frac{8}{3}(\sigma-1)}(Q)}^{\sigma-1}\|v\|_{L^8(Q)} + \|u_\Gamma\|_{L^{\frac{8}{3}(\kappa-1)}(\Sigma)}^{\kappa-1}\|v_\Gamma\|_{L^8(\Sigma)}.
	\end{align*}
	
	Once again, {\bf (A2)} implies the continuous embeddings:
	\begin{align}
		\label{Sobolev:embedding:01}
		L^8(Q) \hookrightarrow L^{\frac{8}{3}(\sigma-1)}(Q),\text{ and } L^8(\Sigma) \hookrightarrow L^{\frac{8}{3}(\kappa-1)}(\Sigma).
	\end{align}
	
	Then, for all $(u,u_\Gamma,v,v_\Gamma)\in \mathbb{B}_r$ with $0<r<1$, we have
	\begin{align}
		\nonumber 
		\|(|u|^{\sigma-1}v,|u_\Gamma|^{\kappa-1}v_\Gamma)\|_{L^2(0,T;\mathbb{L}^2)}=&\|u^{\sigma-1}v\|_{L^2(0,T;L^2(\Omega))} + \|u_\Gamma^{\kappa-1}v_\Gamma\|_{L^2(0,T;L^2(\Gamma))}\\
		\nonumber 
		\leq & \|(u,u_\Gamma)\|_{\mathbb{D}}^{\min\{\sigma,\kappa\}-1} \cdot \|(v,v_\Gamma)\|_{\mathbb{D}}\\
		\label{estimate:mathcal:F:03}
		\leq & C \|(u,u_\Gamma,v,v_\Gamma)\|_{\mathbb{D}^2}^{\min\{\sigma,\kappa\}}
	\end{align}
	
	Thus, combining \eqref{estimate:mathcal:F:02} and \eqref{estimate:mathcal:F:03} with \eqref{estimate:mathcal:F}, using the assumption \eqref{assumption:smallness:data} and the fact that $(u,u_\Gamma,v,v_\Gamma)\in \mathbb{B}_r$, we deduce the existence of a constant $C_1>0$ such that 
	\begin{align}
		\label{estimate:F:00}
		\|\mathcal{F}(u,u_\Gamma,v,v_\Gamma)\|_{\mathbb{D}^2}\leq C_1(\epsilon +  \|(u,u_\Gamma, v,v_\Gamma)\|_{\mathbb{D}^2}^{\min\{\sigma,\kappa\}}). 
	\end{align}
	
	Now, let us consider $(u,u_\Gamma,v,v_\Gamma),(\tilde{u},\tilde{u}_\Gamma,\tilde{v},\tilde{v}_\Gamma)\in \mathbb{D}^2$. Then, we have 
	\begin{align}
	\label{estimate:cont:F:00}
	\begin{split} 
		&\|\mathcal{F}(u,u_\Gamma,v,v_\Gamma) -\mathcal{F}(\tilde{u},\tilde{u}_\Gamma,\tilde{v},\tilde{v}_\Gamma)\|_{\mathbb{D}^2}\\
		\leq &C\|(|u|^{\sigma-1}u-|\tilde{u}|^{\sigma-1}u)\|_{L^2(0,T;\mathbb{L}^2)} \\
		&+ C\|(|u|^{\sigma-2}uv-|\tilde{u}|^{\sigma-2}\tilde{u}\tilde{v},|u_\Gamma|^{\kappa-2}u_\Gamma v_\Gamma -|\tilde{u}_\Gamma|^{\kappa-2}\tilde{u}_\Gamma \tilde{v}_\Gamma)\|_{L^2(0,T;\mathbb{L}^2)}.
	\end{split}
	\end{align}
	
	Let us estimate the terms of the right-hand side of \eqref{estimate:cont:F:00}. Firstly, we notice that
	\begin{align}
		\nonumber 
		&\||u|^{\sigma-1}u-|\tilde{u}|^{\sigma-1}\tilde{u}\|_{L^2(Q)}\\
		\nonumber 
		\leq &C\|(|u|^{\sigma-1}+|\tilde{u}|^{\sigma-1})(u-\tilde{u})\|_{L^2(Q)}
		\\
		%\leq &C \iint_Q (|u|^{2(\sigma-1)}+|\tilde{u}|^{2(\sigma-1)})|u-\tilde{u}|^2\,dx\,dt\\
		%\nonumber 
		%\leq & C \|u^{\sigma-1}\|_{L^{8/3}(Q)} \|u-\tilde{u}\|_{L^8(Q)} + C\|\tilde{u}^{\sigma-1}\|_{L^{8/3}(Q)}\|u-\tilde{u}\|_{L^8(Q)}\\
		\nonumber 
		=& C\left( \|u\|_{L^{\frac{8}{3}(\sigma-1)}(Q)}^{\sigma-1}  + C\|\tilde{u}\|_{L^{\frac{8}{3}(\sigma-1)}(Q)}^{\sigma-1}\right)\|u-\tilde{u}\|_{L^8(Q)}\\
		\label{estimate:cont:F:01}
		\leq & C\left( \|u\|_{L^8(Q)}^{\sigma-1} + \|\tilde{u}\|_{L^8(Q)}^{\sigma-1} \right) \|u-\tilde{u}\|_{L^8(Q)},
	\end{align}
	
	where we have used \eqref{Sobolev:embedding:01}. In the same manner, we can easily check that 
	\begin{align*}
		%\label{estimate:cont:F02}
		\begin{split} 
		\||u_\Gamma|^{\kappa-1}u_\Gamma-|\tilde{u}_\Gamma|^{\kappa-1}\tilde{u}_\Gamma\|_{L^2(\Sigma)} \leq C\left( \|u_\Gamma\|_{L^8(\Sigma))}^{\kappa-1} + \|\tilde{u}_\Gamma\|_{L^8(\Sigma)}^{\kappa-1} \right)\|u_\Gamma -\tilde{u}_\Gamma\|_{L^8(\Sigma)}.
		\end{split}
	\end{align*}
	
	Secondly, we see that 
	\begin{align}
		\nonumber 
		&\||u|^{\sigma-2}uv-|\tilde{u}|^{\sigma-2}\tilde{u}\tilde{v}\|_{L^2(Q)}\\
		\nonumber 
		=&\||u|^{\sigma-2}u(v-\tilde{v}) + (|u|^{\sigma-2}u-\tilde{u}^{\sigma-2}\tilde{u})\tilde{v}\|_{L^2(Q)}\\
		\label{estimate:cont:F03}
		\leq & C \||u|^{\sigma-1}\|_{L^{8/3}(Q)} \|v-\tilde{v}\|_{L^8(Q)} + C\|(|u|^{\sigma-2}u-|\tilde{u}|^{\sigma-2}\tilde{u})\tilde{v}\|_{L^2(Q)}^2.
	\end{align}
	
	Now, by interpolation of $L^p$ spaces and assumption {\bf (A2)}, it is easy to see that 
	\begin{align}
		\label{estimate:cont:F04}
		\||u|^{\sigma-1}\|_{L^{8/3}(Q)} \leq C\|u\|_{L^8(Q)}^{\sigma-1}.
	\end{align}
	
	Moreover, similar to \eqref{estimate:cont:F:01} and using H\"older's inequality we get
	\begin{align}
		\nonumber 
		&\|(|u|^{\sigma-2}u-|\tilde{u}|^{\sigma-2}\tilde{u})\tilde{v}\|_{L^2(Q)}\\
		\nonumber 
		\leq & C \|(|u|^{\sigma-2}+|\tilde{u}|^{\sigma-2})\tilde{v}(u-\tilde{u})\|_{L^2(Q)}\\
		\nonumber 
		\leq & C(\||u|^{\sigma-2}\|_{L^4(Q)} +\||\tilde{u}|^{\sigma-2}\|_{L^4(Q)}\|)\|\tilde{v}\|_{L^8(Q)}\|u-\tilde{u}\|_{L^8(Q)}\\
		\label{estimate:cont:F05}
		\leq & C \left( \|u\|_{L^8(Q)}^{\sigma-2} + \|\tilde{u}\|_{L^8(Q)}^{\sigma-2} \right) \|\tilde{v}\|_{L^8(Q)} \|u-\tilde{u}\|_{L^8(Q)}. 
	\end{align}
	where we have used the fact that 
	\begin{align*}
		L^8(Q)\hookrightarrow L^{4(\sigma-2)}(Q)
	\end{align*}
	by assumption {\bf (A2)}. Then, combining \eqref{estimate:cont:F04} and \eqref{estimate:cont:F05} with \eqref{estimate:cont:F03}, we have that  
	\begin{align}
	\label{estimate:cont:F06}
	\begin{split} 
		&\||u|^{\sigma-2} uv-|\tilde{u}|^{\sigma-2} \tilde{u}\tilde{v}\|_{L^2(Q)}\\
		\leq &  C( \|u\|_{L^8(Q)}^{\sigma-2} + \|\tilde{u}\|_{L^8(Q)}^{\sigma-2} )\|\tilde{v}\|_{L^8(Q)} \|u-\tilde{u}\|_{L^8(Q)}+C\|u\|_{L^8(Q)}^{\sigma-1}\|v-\tilde{v}\|_{L^8(Q)}
	\end{split}
	\end{align}
	
	Analogously, we have the following estimate
	\begin{align}
		\label{estimate:cont:F07}
		\begin{split}
			&\||u_\Gamma|^{\kappa-2}u_\Gamma v_\Gamma -|\tilde{u}_\Gamma|^{\kappa-2}\tilde{u}_\Gamma \tilde{v}_\Gamma\|_{L^2(\Sigma)}\\
			\leq & C\left(\|u_\Gamma\|_{L^8(\Sigma)}^{\kappa-2} + \|\tilde{u}_\Gamma\|_{L^8(\Sigma)}^{\kappa-2}  \right) \|\tilde{v}_\Gamma\|_{L^8(\Sigma)} \|u_\Gamma-\tilde{u}_\Gamma\|_{L^8(\Sigma)}\\ 
			&+C\|u_\Gamma\|_{L^8(\Sigma)}^{\kappa-1} \|v_\Gamma-\tilde{v}_\Gamma\|_{L^8(\Sigma)}
		\end{split}
	\end{align}
	
	Then, combining \eqref{estimate:cont:F06} and \eqref{estimate:cont:F07} with \eqref{estimate:cont:F:00} and using the fact that $(u,u_\Gamma,v,v_\Gamma),(\tilde{u},\tilde{u}_\Gamma,\tilde{v},\tilde{v}_\Gamma)$ belongs to $\mathbb{B}_r$ (with $0<r<1$), we deduce the existence of a constant $C_2>0$ such that 
	\begin{align*}
		%\label{estimate:cont:F08}
		\begin{split}
			&\|\mathcal{F}(u,u_\Gamma,v,v_\Gamma)-\mathcal{F}(\tilde{u},\tilde{u}_\Gamma,\tilde{v},\tilde{v}_\Gamma)\|_{\mathbb{D}^2}\\
			\leq &C_2 r^{\min\{\sigma,\kappa\}-1}\|(u,u_\Gamma,v,v_\Gamma)-(\tilde{u},\tilde{u}_\Gamma,\tilde{v},\tilde{v}_\Gamma)\|_{\mathbb{D}^2}.
		\end{split}
	\end{align*}
	
	To conclude, let us choose $0<r<1$ and $\epsilon>0$ such that 
	\begin{align}
		\label{estimate:cont:F09}
		r^{\min\{\sigma,\kappa\}-1}\leq \min \left\{\dfrac{1}{2C_1},\dfrac{1}{C_2} \right\}<1, \quad \epsilon \leq \dfrac{1}{2C_1}r.
	\end{align}
	
	Then, using the conditions \eqref{estimate:cont:F09} in \eqref{estimate:F:00} and \eqref{estimate:cont:F09} implies that $\mathcal{F}:\mathbb{B}_r\to \mathbb{B}_r$ is a contraction mapping. Thus, by Banach's fixed point Theorem, the equation 
	\begin{align*}
		\mathcal{F}(u,u_\Gamma,v,v_\Gamma)=(u,u_\Gamma,v,v_\Gamma)
	\end{align*}
	has a unique solution $(u,u_\Gamma,v,v_\Gamma)\in \mathbb{B}_r$. This concludes the proof of the Proposition \ref{prop:existence:nonlinear}. 
\end{proof}

%%%%%%%%%%%%%%%%%%%%%%%%%%%%%%%%%%%%%%%%%%%%%%%%%%%%%%%%%%%%%%%%%%%%%%%%%%%%%%%%%%%%%%
\section{Carleman inequalities for parabolic systems with dynamic boundary conditions}
\label{section:Carleman:inequalities}
In this section, we prove new Carleman estimates for parabolic systems related to the existence of insensitizing controls of the functional $\Phi$ defined in \eqref{def:Phi:insen}.

%The purpose of this section is to prove an observability result for the system

To fix ideas, we introduce the adjoint system
\begin{align}
	\label{adjoint:system:psi}
	\begin{cases}
		L(\psi) =g^1&\text{ in }Q,\\
		L_\Gamma(\psi,\psi_\Gamma)=g_\Gamma^1&\text{ on }\Sigma,\\
		\psi\big|_\Gamma=\psi_\Gamma &\text{ on }\Sigma,\\
		(\psi(\cdot,0),\psi_\Gamma(\cdot,0))=(\psi^0,\psi_\Gamma^0)&\text{ in }\Omega\times \Gamma,
	\end{cases}
\end{align}
\begin{align}
	\label{adjoint:system:varphi}
	\begin{cases}
		L^*(\varphi) =g^0+\mathbbm{1}_\mathcal{O} \psi &\text{ in }Q,\\
		L_\Gamma^*(\varphi,\varphi_\Gamma) =g_\Gamma^0+\nabla_\Gamma \cdot (\mathbbm{1}_{\mathcal{G}} \nabla_\Gamma \psi_\Gamma)&\text{ on }\Sigma,\\
		\varphi\big|_\Gamma=\varphi_\Gamma&\text{ on }\Sigma,\\
		(\varphi(\cdot,T),\varphi_\Gamma(\cdot,T))=(0,0)&\text{ in }\Omega\times \Gamma.
	\end{cases}
\end{align}

Then, the main task of this section is to prove an inequality of the form
%More precisely, we shall prove an observability inequality of the form 
\begin{align}
	\label{observability:ineq}
	\begin{split}
	&\iint_{Q} \rho_1(t)(|\varphi|^2 + |\nabla \varphi|^2 +| \psi|^2 + |\nabla \psi|^2)\,dx\,dt\\
	 &+ \iint_\Sigma \rho_2(t)(|\varphi_\Gamma|^2 + |\nabla_\Gamma \varphi_\Gamma|^2 +|\psi_\Gamma|^2 +|\nabla_\Gamma \varphi|^2)\,dS\,dt\\
	\leq & C \iint_Q \rho_3(t) (|g^0|^2+ |g^1|^2)\,dx\,dt +C \iint_\Sigma \rho_4(t)(|g_\Gamma^0|^2 + |g_\Gamma^1|^2)\,dS\,dt\\
	&+C\iint_{Q_\omega} \rho_5(t) |\varphi|^2\,dx\,dt, 
	\end{split} 
\end{align} 
for each solution of  \eqref{adjoint:system:psi}-\eqref{adjoint:system:varphi}, where $\rho_i$ $(i=1,\ldots,5)$ are weight functions which blow up at $t=0$ and $C>0$ only depends on $\Omega,\omega,\mathcal{O},\mathcal{G}$ and $T$.

We emphasize that \eqref{observability:ineq} only depends on the local integral terms of the variables $(\varphi,\varphi_\Gamma)$ on the right-hand side. %Moreover, since \eqref{adjoint:system:psi}-\eqref{adjoint:system:varphi} has a local second order coupling term on the boundary, we cannot apply the results obtained in \cite{zhang2019insen}. 
%%%%%%%%%%%%%%%%%%%%%%%%%%%%%%%%%%%%%%%%%%%%%%%%%%%%%%%%%%%%%%%%%%%%%%%%%%%%%%%%%%%%%%

\subsection{First Carleman estimates for the heat equation with dynamic boundary conditions}
%\label{subsection:2.1}

Now, we will prove a Carleman estimate for parabolic equations with dynamic boundary conditions with source terms in $L^2(0,T;\mathbb{L}^2)$. More precisely, we will consider the system

\begin{align}
		\label{heat:eq:zeta:linear}
		\begin{cases}
			L^*(\zeta) =h&\text{ in }Q,\\
			L_\Gamma^*(\zeta,\zeta_\Gamma)=h_\Gamma &\text{ on }\Sigma,\\
			\zeta\big|_\Gamma=\zeta_\Gamma &\text{ on }\Sigma,\\
			(\zeta(\cdot,T),\zeta_\Gamma (\cdot,T))=(\zeta^T,\zeta_\Gamma^T) &\text{ in }\Omega\times \Gamma,
		\end{cases}
\end{align}
%	with $(h,h_\Gamma)\in L^2(0,T;\mathbb{L}^2)$, $(\zeta^T,\zeta_\Gamma^T)\in \mathbb{L}^2$ and $(R,R_\Gamma)\in L^\infty(Q) \times L^\infty(\Sigma)$.

We use the classical weight functions used in Carleman estimates for the heat equation introduced by Fursikov and Imanuvilov. For each $\omega \Subset \Omega$, define $\eta^0\in C^2(\overline{\Omega})$ with the following requirements:
\begin{align*}
\eta^0>0\text{ in }\Omega,\quad \eta^0=0\text{ on }\partial\Omega\quad \text{ and }|\nabla \eta^0|\geq C>0\text{ in }\overline{\Omega\setminus \omega}.
\end{align*}
The existence of $\eta^0$ is guaranteed see for instance \cite{fursikov1996imanuvilov}. Now, we define the weight functions
\begin{align}
	\label{def:weight:alpha:xi}
	\xi(x,t):=\dfrac{e^{\lambda \eta^0(x)}}{t(T-t)}\quad \text{ and }\quad  \alpha(x,t):=\dfrac{e^{2\lambda \|\eta^0\|_\infty}-e^{\lambda \eta^0(x)}}{t(T-t)}\quad \forall (x,t)\in \overline{\Omega}\times (0,T).
\end{align}

For $m\geq 0$, $(\zeta,\zeta_\Gamma)\in \mathbb{E}$ and $s,\lambda>0$ we shall use the notation
\begin{align*}
		I(m,\zeta,\zeta_\Gamma)
		:=&\iint_Q e^{-2s\alpha} (s^{m+3} \lambda^{m+4} \xi^{m+3} |\zeta|^2 + s^{m+1} \lambda^{m+2} \xi^{m+1} |\nabla \zeta|^2) \,dx\,dt\\
		&+\iint_Q e^{-2s\alpha} (s^{m-1} \lambda^{m} \xi^{m-1} (|\pt \zeta|^2 +|\Delta \zeta|^2  ))\,dx\,dt\\
			&+\iint_\Sigma e^{-2s\alpha}(s^{m+3}\lambda^{m+3}\xi^{m+3} |\zeta_\Gamma|^2+s^{m+1}\lambda^{m+1} \xi^{m+1}(|\pnu \zeta|^2 +|\nabla_\Gamma \zeta_\Gamma|^2) )\,dS\,dt\\
			 &+s^{m-1}\lambda^m \iint_\Sigma e^{-2s\alpha}\xi^{m-1}(|\pt \zeta|^2 +|\Delta_\Gamma \zeta_\Gamma|^2)\,dS\,dt
\end{align*}

We can now state the Carleman estimate for the heat equation with dynamic boundary conditions with source terms in $L^2(0,T;\mathbb{L}^2)$.

\begin{proposition}
	\label{proposition:maniar}
	Let $\omega\Subset \Omega$ and $\omega'\subset \Omega$ such that $\omega'\Subset \omega$. Define $\eta^0$, $\alpha$ and $\xi$ associated to $\omega'$ as in \eqref{def:weight:alpha:xi}. Then, there exist constants $C_1,\lambda_0,s_0>0$ such that for all $\lambda \geq \lambda_0$ and $s\geq s_0$, we have 
	\begin{align}
		\label{Carleman:m:heat:eq}
		\begin{split}
			 I(m,\zeta,\zeta_\Gamma) \leq & C_1 s^m \lambda^m \iint_Q e^{-2s\alpha} \xi^m |h|^2 dxdt + C_1s^m\lambda^m \iint_\Sigma e^{-2s\alpha}\xi^m |h_\Gamma|^2 dSdt\\
			&C_1 s^{m+3}\lambda^{m+4}\iint_{Q_\omega} e^{-2s\alpha} \xi^{m+3}|\zeta|^2 dxdt,
		\end{split}
	\end{align}
	where each $(\zeta,\zeta_\Gamma)$ solution of \eqref{heat:eq:zeta:linear} with $(h,h_\Gamma)\in L^2(0,T;\mathbb{L}^2)$, $(R,R_\Gamma)\in L^\infty(0,T;\mathbb{L}^\infty)$ and final data  $(\zeta^T,\zeta_\Gamma^T)\in \mathbb{L}^2$.
\end{proposition}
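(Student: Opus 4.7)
The plan is to derive \eqref{Carleman:m:heat:eq} by reduction to the case $m=0$, which is precisely the Carleman inequality for the backward heat equation with dynamic boundary conditions already established in \cite{maniar2017null}. Indeed, when $m=0$ the powers of $s$, $\lambda$, $\xi$ appearing in $I(0,\zeta,\zeta_\Gamma)$ match term by term the standard Fursikov--Imanuvilov bulk terms $s^3\lambda^4\xi^3|\zeta|^2$, $s\lambda^2\xi|\nabla\zeta|^2$, $s^{-1}\xi^{-1}(|\pt\zeta|^2+|\Delta\zeta|^2)$ together with the dynamic-boundary terms $s^3\lambda^3\xi^3|\zeta_\Gamma|^2$, $s\lambda\xi(|\pnu\zeta|^2+|\nabla_\Gamma\zeta_\Gamma|^2)$, $s^{-1}\xi^{-1}(|\pt\zeta|^2+|\Delta_\Gamma\zeta_\Gamma|^2)$ proved by Maniar--Meyries--Schnaubelt. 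I take this as the starting point.

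To pass to a general $m\geq 0$, I would introduce the weighted unknown
\begin{align*}
\hat{\zeta} := (s\lambda\xi)^{m/2}\zeta, \quad \hat{\zeta}_\Gamma := (s\lambda\xi)^{m/2}\zeta_\Gamma.
\end{align*}
Using the classical pointwise bounds $|\pt\xi|\leq CT\xi^2$, $|\nabla\xi|\leq C\lambda\xi$, $|\Delta\xi|\leq C\lambda^2\xi$ together with the analogous tangential bounds on $\Gamma$, a direct computation shows that $(\hat{\zeta},\hat{\zeta}_\Gamma)$ solves a system of the same form as \eqref{heat:eq:zeta:linear} with sources $(\hat h,\hat h_\Gamma):=(s\lambda\xi)^{m/2}(h,h_\Gamma)$ plus remainder terms $\mathcal{R}$ in $Q$ and $\mathcal{R}_\Gamma$ on $\Sigma$. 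These remainders are linear combinations of $\zeta,\nabla\zeta,\pnu\zeta,\zeta_\Gamma,\nabla_\Gamma\zeta_\Gamma$ whose coefficients are of the form (integer power of $\lambda$)$\,\times (s\lambda\xi)^{m/2}$ with an extra factor of $\xi$ or $T\xi$. Applying the $m=0$ estimate to $(\hat\zeta,\hat\zeta_\Gamma)$ and then expressing $\hat\zeta$ back in terms of $\zeta$ yields an inequality whose left-hand side equals $I(m,\zeta,\zeta_\Gamma)$ up to commutator errors, and whose right-hand side equals the three main terms of \eqref{Carleman:m:heat:eq} plus $L^2$ norms of $\mathcal{R}$ and $\mathcal{R}_\Gamma$.

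The main obstacle is then absorbing $\mathcal{R}$ and $\mathcal{R}_\Gamma$ into $I(m,\zeta,\zeta_\Gamma)$, which is a matter of careful scale counting. A representative bulk contribution is the cross term $\lambda(s\lambda\xi)^{m/2}(\nabla\eta^0)\cdot\nabla\zeta$ coming from $\Delta\bigl((s\lambda\xi)^{m/2}\zeta\bigr)$; squared, this produces $Cs^m\lambda^{m+2}\xi^m|\nabla\zeta|^2$, which is a factor $(s\xi)^{-1}$ smaller than the left-hand side term $s^{m+1}\lambda^{m+2}\xi^{m+1}|\nabla\zeta|^2$ and hence absorbed once $s\xi\geq s_0(T+T^2)$. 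A representative boundary contribution is $\lambda(s\lambda\xi)^{m/2}(\nabla\eta^0\cdot\nu)\zeta$ from $\pnu\bigl((s\lambda\xi)^{m/2}\zeta\bigr)$, which after squaring gives $Cs^m\lambda^{m+2}\xi^{m+2}|\zeta|^2$ on $\Sigma$, a factor $(s\xi\lambda)^{-1}$ smaller than $s^{m+3}\lambda^{m+3}\xi^{m+3}|\zeta_\Gamma|^2$. Analogous bookkeeping disposes of the tangential remainders (involving $\nabla_\Gamma\xi,\Delta_\Gamma\xi$) and the time remainders (involving $\pt\xi$). This delivers \eqref{Carleman:m:heat:eq} for all $m\geq 0$ after fixing $s_0$ and $\lambda_0$ sufficiently large.
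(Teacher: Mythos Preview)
Your proposal is correct and follows essentially the same route as the paper: the paper also cites \cite{maniar2017null} for $m=0$ and, for $m>0$, applies the $m=0$ estimate to $(\xi^{m/2}\zeta,\xi^{m/2}\zeta_\Gamma)$, absorbing the commutator remainders by taking $s,\lambda$ large. Your use of $(s\lambda\xi)^{m/2}$ instead of $\xi^{m/2}$ is immaterial since $s,\lambda$ are constants, and the detailed scale-counting you provide (modulo a harmless miscount in the boundary power of $\xi$) is precisely the bookkeeping the paper leaves to the reader.
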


If $m=0$, \eqref{Carleman:m:heat:eq} is just the Carleman estimate for the heat equation with dynamic boundary conditions previously obtained in \cite{maniar2017null}. If $m>0$, we apply \eqref{Carleman:m:heat:eq} (with $m=0$) to the pair $(\xi^{m/2}\zeta,\xi^{m/2}\zeta_\Gamma)$ and take $s$ and $\lambda$ large enough to absorb the remainder terms.

%%%%%%%%%%%%%%%%%%%%%%%%%%%%%%%%%%%%%%%%%%%%%%%%%%%%%
\subsection{A Carleman estimate for the heat equation in Sobolev spaces of negative order}

Now, we restrict our attention to deduce a Carleman inequality for a heat equation with dynamic boundary conditions with source terms in $L^2(0,T;(\mathbb{H}^1)')$.

Let $(\phi,\phi_\Gamma)$ be the solution of 
	\begin{align}
		\label{system:phi:phi:gamma}
		\begin{cases}
			L^*(\phi) =h^0+\nabla\cdot h^1&\text{ in }Q,\\
			L^*_\Gamma(\phi,\phi_\Gamma) =h_\Gamma^0+\nabla_\Gamma \cdot h_\Gamma^1&\text{ on }\Sigma,\\
			\phi\big|_\Gamma=\phi_\Gamma&\text{ on }\Sigma,\\
			(\phi,\phi_\Gamma)(\cdot,T)=(\phi^T,\phi_\Gamma^T)&\text{ in }\Omega\times \Gamma.
		\end{cases}
	\end{align}

We point out that for $(\phi^T,\phi_\Gamma^T)\in \mathbb{L}^2$, $(h^0,h_\Gamma^0)\in L^2(0,T;\mathbb{L}^2)$ and $(h^1,h_\Gamma^1)\in L^2(0,T;\mathbb{L}^2)$, the solution $(\phi,\phi_\Gamma) $ of \eqref{system:phi:phi:gamma} exists in the distributional sense. Moreover, $(\phi,\phi_\Gamma)\in C^0([0,T];L^2)$ and there exists a constant $C>0$ such that
\begin{align*}
	\|(\phi,\phi_\Gamma)\|_{C^0([0,T];\mathbb{L}^2)}\leq C (\|(\phi^T,\phi_\Gamma^T)\|_{\mathbb{L}^2}+ \|(h^0,h_\Gamma^0)\|_{L^2(0,T;\mathbb{L}^2)} + \|(h^1,h_\Gamma^1)\|_{L^2(0,T;\mathbb{L}^2)}).
\end{align*}

For $\omega \Subset \Omega$, define the weight functions $\xi$ and $\alpha$ in \eqref{def:weight:alpha:xi}. Besides, for $n\geq 0$, $(\phi,\phi_\Gamma)\in L^2(0,T;\mathbb{H}^1)$ and $s,\lambda>0$, we write 
\begin{align*}
	\begin{split}
		J(n,\phi,\phi_\Gamma)&:=\iint_Q e^{-2s\alpha} (s^{n+3}\lambda^{n+4}\xi^{n+3} |\phi|^2 + s^{n+1}\lambda^{n+2} \xi^{n+1} |\nabla \phi|^2)dxdt\\
		 &+ \iint_\Sigma e^{-2s\alpha} (s^{n+3}\lambda^{n+3}\xi^{n+3} |\phi_\Gamma|^2+s^{n+1}\lambda^{n+2} \xi^{n+1} |\nabla_\Gamma \phi_\Gamma|^2 )dxdt\\
	\end{split}
\end{align*}

Additionally, we introduce the spaces
\begin{align*}
	X:=\{r\in [L^2(\Omega)]^d \,:\, r\cdot \nu \in L^2(\Gamma) \},\,\text{ and } Y:=[L^2(\Gamma)]^d.
\end{align*}

We point out that $H^1(\Omega) \subsetneq X$.

\begin{lemma}
	\label{Lemma:Carleman:phi:phi:gamma}
	Let $\omega\Subset \Omega$ and $\omega'\Subset \omega$ and define the weights $\alpha$ and $\xi$ in \eqref{def:weight:alpha:xi} associated to $\omega'$. 
	Moreover, let $(h^0,h_\Gamma^0)\in L^2(0,T;\mathbb{L}^2)$, $(h^1,h_\Gamma^1)\in L^2(0,T;X\times Y)$, $(\phi^T,\phi_\Gamma^T)\in \mathbb{L}^2$ and $(R,R_\Gamma)\in L^\infty(0,T;\mathbb{L}^\infty)$.
	Then, there exist constants $C,s_1,\lambda_1>0$ such that for all $\lambda \geq\lambda _1$ and $s\geq s_1$, we have	\begin{align}
		\label{Carleman:phi:phi:gamma}
		\begin{split}
			J(n,\phi,\phi_\Gamma)
			\leq & C\iint_Q e^{-2s\alpha } (\lambda^n s^n \xi^n |h^0|^2 +s^{n+2}\lambda^{n+2} \xi^{n+2} |h^1|^2)\,dx\,dt \\
			&+ C\iint_\Sigma e^{-2s\alpha}(s^n\lambda^{n+1}\xi^n|h_\Gamma^0|^2 + s^{n+2}\lambda^{n+2} \xi^{n+2} |h_\Gamma^1|^2 + s^n\lambda^{n+1}\xi^{n} |h^1\cdot\nu|^2)\,dS\,dt\\
			&+Cs^{n+3}\lambda^{n+4}\iint_{Q_\omega} e^{-2s\alpha} \xi^{n+3} |\phi|^2 \,dx\,dt, 
		\end{split}
	\end{align}
	for each solution $(\phi,\phi_\Gamma)$ of \eqref{system:phi:phi:gamma}.
\end{lemma}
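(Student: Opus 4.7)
The strategy is to split the problem according to the regularity of the source and handle each piece separately. By linearity, write $(\phi,\phi_\Gamma)=(\phi_1,\phi_{1,\Gamma})+(\phi_2,\phi_{2,\Gamma})$, where $(\phi_2,\phi_{2,\Gamma})$ solves the system with regular right-hand side $(h^0,h_\Gamma^0)$ and terminal data $(\phi^T,\phi_\Gamma^T)$, while $(\phi_1,\phi_{1,\Gamma})$ solves the analogous system with distributional right-hand side $(\nabla\cdot h^1,\nabla_\Gamma\cdot h_\Gamma^1)$ and zero terminal data. Both solutions are well defined (the second in the distributional sense suggested by Proposition 2.2 and the cascade structure), and the compatibility $\phi_i|_\Gamma=\phi_{i,\Gamma}$ is preserved.

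The regular piece is handled directly by Proposition 3.1 applied with $m=n$ to $(\phi_2,\phi_{2,\Gamma})$: since the integrand of $J$ is dominated termwise by that of $I$, this yields the $s^n\lambda^n\xi^n|h^0|^2$ and $s^n\lambda^{n+1}\xi^n|h_\Gamma^0|^2$ contributions on the right-hand side of \eqref{Carleman:phi:phi:gamma}, together with a local term $s^{n+3}\lambda^{n+4}\iint_{Q_\omega}e^{-2s\alpha}\xi^{n+3}|\phi_2|^2$. The latter is converted to the desired local term in $\phi$ through the triangle inequality $|\phi_2|^2\leq 2|\phi|^2+2|\phi_1|^2$, the $|\phi_1|^2$ contribution being swallowed by the global weighted estimate for $\phi_1$ derived in the next step.

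The core of the argument is a weighted energy estimate for $(\phi_1,\phi_{1,\Gamma})$. I would multiply the bulk equation by $e^{-2s\alpha}\xi^{n+3}\phi_1$ and the boundary equation by $e^{-2s\alpha}\xi^{n+3}\phi_{1,\Gamma}$, integrate over $Q$ and $\Sigma$ respectively, and integrate by parts. The terminal condition $(\phi_1,\phi_{1,\Gamma})(\cdot,T)=(0,0)$ eliminates the time endpoints; the Laplacian in $Q$ generates the good-sign bulk energy $\iint_Q|\nabla\phi_1|^2 e^{-2s\alpha}\xi^{n+3}$ plus a surface flux $-\int_\Sigma \pnu\phi_1\,e^{-2s\alpha}\xi^{n+3}\phi_{1,\Gamma}$, which combines with the boundary equation $\pnu\phi_1=\pt\phi_{1,\Gamma}+\Delta_\Gamma\phi_{1,\Gamma}-R_\Gamma\phi_{1,\Gamma}+\nabla_\Gamma\cdot h_\Gamma^1$ and, through the Surface Divergence Theorem \eqref{surface:divergence:theorem}, produces the good-sign tangential energy $\int_\Sigma|\nabla_\Gamma\phi_{1,\Gamma}|^2 e^{-2s\alpha}\xi^{n+3}$. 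Meanwhile, the divergence source integrates as
\[
\iint_Q (\nabla\cdot h^1)\,e^{-2s\alpha}\xi^{n+3}\phi_1\,dx\,dt=-\iint_Q h^1\cdot\nabla(e^{-2s\alpha}\xi^{n+3}\phi_1)\,dx\,dt+\int_\Sigma (h^1\cdot\nu)\,e^{-2s\alpha}\xi^{n+3}\phi_{1,\Gamma}\,dS\,dt,
\]
and analogously for the tangential divergence source on $\Sigma$, where integration by parts replaces $\nabla_\Gamma\cdot h_\Gamma^1$ by $-h_\Gamma^1\cdot\nabla_\Gamma(e^{-2s\alpha}\xi^{n+3}\phi_{1,\Gamma})$. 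Young's inequality with small parameter then absorbs all $|\nabla\phi_1|^2$, $|\nabla_\Gamma\phi_{1,\Gamma}|^2$, $|\phi_1|^2$, $|\phi_{1,\Gamma}|^2$ cross terms into the corresponding LHS quantities, and the powers of $s,\lambda$ on each source weight are determined by counting how many derivatives fall on the weight $e^{-2s\alpha}\xi^{n+3}$ (each contributing at most $Cs\lambda\xi$), producing precisely the $s^{n+2}\lambda^{n+2}\xi^{n+2}$ coefficients on $|h^1|^2$ and $|h_\Gamma^1|^2$ and the $s^n\lambda^{n+1}\xi^n$ coefficient on $|h^1\cdot\nu|^2$. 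Choosing $s,\lambda$ large enough closes the estimate and yields the desired bound on $J(n,\phi_1,\phi_{1,\Gamma})$.

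The main obstacle is the delicate bookkeeping on $\Sigma$ in the last step: the normal flux $\pnu\phi_1$ must be eliminated through the boundary PDE, which produces fresh surface integrals entangled with $\nabla_\Gamma\cdot h_\Gamma^1$ and $\pt\phi_{1,\Gamma}$; one must verify that these combine consistently, that every cross term is absorbable for $s,\lambda$ sufficiently large, and that the exact powers of $s,\lambda,\xi$ emerging from successive applications of Young's inequality match those stated in \eqref{Carleman:phi:phi:gamma}. The asymmetry between the $s^n\lambda^{n+1}\xi^n$ coefficient on the normal flux $h^1\cdot\nu$ and the $s^{n+2}\lambda^{n+2}\xi^{n+2}$ coefficient on the full bulk vector $h^1$ in particular must arise naturally from pairing the surface flux integral against the boundary LHS term $s^{n+3}\lambda^{n+3}\xi^{n+3}|\phi_{1,\Gamma}|^2$, rather than against the bulk gradient.
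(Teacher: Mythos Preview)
Your decomposition $(\phi,\phi_\Gamma)=(\phi_1,\phi_{1,\Gamma})+(\phi_2,\phi_{2,\Gamma})$ and the treatment of the regular piece via Proposition~\ref{proposition:maniar} are reasonable, but the core step --- the weighted energy estimate for $(\phi_1,\phi_{1,\Gamma})$ --- does not close. Multiplying the bulk equation by $w\phi_1$ with $w=e^{-2s\alpha}\xi^{n+3}$ and integrating by parts produces on the left only $\iint_Q w|\nabla\phi_1|^2$ (coefficient $1$) and its surface analogue; it does \emph{not} produce the term $s^{n+3}\lambda^{n+4}\iint_Q e^{-2s\alpha}\xi^{n+3}|\phi_1|^2$ that sits in $J(n,\phi_1,\phi_{1,\Gamma})$. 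Worse, the cross term $\iint_Q\phi_1\,\nabla w\cdot\nabla\phi_1=-\tfrac12\iint_Q(\Delta w)|\phi_1|^2+\text{boundary}$ has leading part $\Delta w\sim 4s^2\lambda^2|\nabla\eta^0|^2\xi^{n+5}e^{-2s\alpha}>0$, so it contributes a term of order $s^2\lambda^2\iint_Q e^{-2s\alpha}\xi^{n+5}|\phi_1|^2$ on the \emph{right-hand side}. Nothing on your left can absorb it, and taking $s,\lambda$ large only makes it grow. This is exactly the obstruction that the Carleman conjugation $\phi\mapsto e^{-s\alpha}\phi$ and the associated commutator analysis are designed to overcome; a single multiplication by a weighted test function cannot replace that mechanism, so ``choosing $s,\lambda$ large enough'' does not close the estimate.

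The paper's argument is entirely different and much shorter: it does not prove the inequality from scratch but quotes the case $n=0$ from \cite{Boutaayamou2021Thecost} (where the full conjugation method is carried out for divergence-form sources under dynamic boundary conditions), and then obtains $n>0$ by applying the $n=0$ inequality to $(\xi^{n/2}\phi,\xi^{n/2}\phi_\Gamma)$ and absorbing the resulting lower-order terms for $s,\lambda$ large --- the same device used to pass from $m=0$ to $m>0$ in Proposition~\ref{proposition:maniar}. If you want a self-contained proof, you must either reproduce the conjugation argument adapted to divergence-form right-hand sides, or deduce the estimate from Proposition~\ref{proposition:maniar} by a duality or regularization argument; the direct energy method you sketch is not enough.
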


We point out that \eqref{Carleman:phi:phi:gamma} coincides with the Carleman estimate obtained in \cite{Boutaayamou2021Thecost} (see also \cite{Boutaayamou2022Stackelberg}) in the case $n=0$. To deduce the general case $n>0$, we apply \eqref{Carleman:phi:phi:gamma} to the couple $(\xi^{n/2}\phi,\xi^{n/2}\phi_\Gamma)$ and absorb the rest of the terms taking $s$ and $\lambda$ large enough.

%%%%%%%%%%%%%%%%%%%%%%%%%%%%%%%%%%%%%%%%%%%%%%%%%%%%%%%%%%%%%%%%%%%%%%%%%%%%%%%%

\subsection{A new Carleman estimate for the adjoint coupled system}

In this section, we prove a Carleman estimate for the adjoint system \eqref{adjoint:system:psi}-\eqref{adjoint:system:varphi}.

\begin{theorem}
	%\label{Thm:Carleman:coupled:phi:psi}
	Let $(g^0,g_\Gamma^0), (g^1,g_\Gamma^1)\in L^2(0,T;\mathbb{L}^2)$. Assume {\bf (H1)} and consider $\omega''\Subset \omega\cap \mathcal{O}$ to define $\eta^0,\alpha$ and $\xi$ with respect to $\omega''$. Then, there exist constants $C_3,\lambda_3,s_3>0$ such that for all $\lambda\geq \lambda_3$ and $s\geq s_3$, we have
	\begin{align}
		\label{Carleman:coupled:phi:psi}
		\begin{split}
			&I(1,\psi,\psi_\Gamma)+J(0,\varphi,\varphi_\Gamma)\\
			\leq & C_3 \iint_Q e^{-2s\alpha} (s^4\lambda^5 \xi^4 |g^0|^2 + s\lambda \xi^5 |g^1|^2) \,dx\,dt + C_3\iint_\Sigma e^{-2s\alpha}(\lambda |g_\Gamma^0|^2 + s\lambda \xi^2 |g_\Gamma^1|^2)\,dS\,dt\\
			&+C_3 s^9\lambda^{11}\iint_{Q_\omega} e^{-2s\alpha} \xi^9 |\varphi|^2 \,dx\,dt,
		\end{split}
	\end{align}
	for all $(\varphi,\varphi_\Gamma,\psi,\psi_\Gamma)$ solution of \eqref{adjoint:system:psi}-\eqref{adjoint:system:varphi} with $(g^0,g_\Gamma^0),(g^1,g_\Gamma^1)\in L^2(0,T;\mathbb{L}^2)$ and $(\psi^0,\psi_\Gamma^0)\in \mathbb{L}^2$. 
\end{theorem}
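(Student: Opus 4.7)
The plan is to combine the Carleman estimate of Proposition \ref{proposition:maniar} applied to the forward equation for $(\psi,\psi_\Gamma)$ with the estimate of Lemma \ref{Lemma:Carleman:phi:phi:gamma} applied to the backward equation for $(\varphi,\varphi_\Gamma)$, treating the coupling terms $\mathbbm{1}_\mathcal{O}\psi$ and $\nabla_\Gamma\cdot(\mathbbm{1}_\mathcal{G}\nabla_\Gamma\psi_\Gamma)$ as source terms for $\varphi$, and then eliminating the remaining local integral of $\psi$ in $\omega$ through the coupling itself. Since the weights defined in \eqref{def:weight:alpha:xi} are symmetric in $t$, Proposition \ref{proposition:maniar} applies to the forward system \eqref{adjoint:system:psi} by time reversal.

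First, I apply Proposition \ref{proposition:maniar} with $m=1$ to $(\psi,\psi_\Gamma)$ and bound $I(1,\psi,\psi_\Gamma)$ by $Cs\lambda\iint_Q e^{-2s\alpha}\xi|g^1|^2\,dx\,dt + Cs\lambda\iint_\Sigma e^{-2s\alpha}\xi|g_\Gamma^1|^2\,dS\,dt + Cs^4\lambda^5\iint_{Q_\omega}e^{-2s\alpha}\xi^4|\psi|^2\,dx\,dt$. Next, I apply Lemma \ref{Lemma:Carleman:phi:phi:gamma} with $n=0$ to $(\varphi,\varphi_\Gamma)$, identifying $h^0 = g^0 + \mathbbm{1}_\mathcal{O}\psi$, $h^1 = 0$, $h_\Gamma^0 = g_\Gamma^0$ and $h_\Gamma^1 = \mathbbm{1}_\mathcal{G}\nabla_\Gamma\psi_\Gamma$. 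This controls $J(0,\varphi,\varphi_\Gamma)$ by the prescribed source terms $|g^0|^2$ and $\lambda|g_\Gamma^0|^2$, by the two coupling contributions $C\iint_Q e^{-2s\alpha}\mathbbm{1}_\mathcal{O}|\psi|^2\,dx\,dt$ and $Cs^2\lambda^2\iint_\Sigma e^{-2s\alpha}\xi^2\mathbbm{1}_\mathcal{G}|\nabla_\Gamma\psi_\Gamma|^2\,dS\,dt$, and by a local $|\varphi|^2$ term in $\omega$. Adding the two estimates, the bulk coupling is absorbed into the $s^4\lambda^5\xi^4|\psi|^2$ part of $I(1,\psi,\psi_\Gamma)$ for $s,\lambda$ large. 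The tangential-gradient contribution on $\Sigma_\mathcal{G}$ is more subtle, since its coefficient $s^2\lambda^2\xi^2$ matches that of the corresponding boundary term of $I(1,\psi,\psi_\Gamma)$; to gain the extra order I apply Proposition \ref{proposition:maniar} instead with $m=2$ (which dominates the $m=1$ estimate once $s\lambda\xi\geq 1$, using that $\xi$ is bounded below) to upgrade the boundary LHS coefficient to $s^3\lambda^3\xi^3$ and absorb the boundary coupling for $\lambda$ large.

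To remove the remaining local term $Cs^4\lambda^5\iint_{Q_\omega}e^{-2s\alpha}\xi^4|\psi|^2\,dx\,dt$, I use assumption \textbf{(H1)} to fix $\omega''\Subset \omega\cap\mathcal{O}$ and a cutoff $\theta\in C_c^\infty(\omega\cap\mathcal{O})$ with $\theta\equiv 1$ on $\omega''$. Because $\mathbbm{1}_\mathcal{O}\theta = \theta$, the equation for $\varphi$ yields the pointwise identity $\theta\psi = \theta(L^*\varphi - g^0)$ in $Q$. Multiplying by $s^4\lambda^5\xi^4 e^{-2s\alpha}\theta\psi$ and integrating over $Q$, I split into a $\theta g^0$ piece, handled by Young's inequality and producing the announced $s^4\lambda^5\xi^4|g^0|^2$ term on the right-hand side, and a $\theta L^*\varphi$ piece, handled by integration by parts in $x$ and $t$. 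The compact support of $\theta$ in $\omega$ and the vanishing of $e^{-2s\alpha}$ at $t=0,T$ make every boundary contribution disappear; the derivatives fall on the weight and on $\psi$, and the resulting terms containing $\nabla\psi,\pt\psi,\Delta\psi$ are reabsorbed into $I(1,\psi,\psi_\Gamma)$ by Young's inequality, at the expense of a local $|\varphi|^2$ term in $Q_\omega$ whose exponent $s^9\lambda^{11}\xi^9$ matches \eqref{Carleman:coupled:phi:psi}.

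The main obstacle, and what genuinely distinguishes this estimate from the zeroth-order coupling treated in \cite{zhang2019insen}, is the second-order boundary source $\nabla_\Gamma\cdot(\mathbbm{1}_\mathcal{G}\nabla_\Gamma\psi_\Gamma)$: it forces the use of the dual-norm Carleman estimate of Lemma \ref{Lemma:Carleman:phi:phi:gamma} for $\varphi$, whose boundary contribution $s^{n+2}\lambda^{n+2}\xi^{n+2}|h_\Gamma^1|^2$ has, for $n=0$, exactly the same exponents as the boundary term $s^{m+1}\lambda^{m+1}\xi^{m+1}|\nabla_\Gamma\psi_\Gamma|^2$ on the LHS of Proposition \ref{proposition:maniar} with $m=1$. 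Resolving this matching by the shift $m\mapsto m+1$ and then carefully tracking the extra powers of $s,\lambda,\xi$ produced by the integration by parts of the previous step is the technical heart of the proof and accounts for the unusual exponents appearing on the right-hand side of \eqref{Carleman:coupled:phi:psi}.
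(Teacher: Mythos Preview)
Your approach is essentially the same as the paper's—combine the $m=1$ Carleman for $(\psi,\psi_\Gamma)$, the $n=0$ dual Carleman for $(\varphi,\varphi_\Gamma)$, and eliminate the local $\psi$ term via a cutoff $\theta\in C_c^\infty(\omega\cap\mathcal{O})$ and the equation $\mathbbm{1}_{\mathcal{O}}\psi=L^*\varphi-g^0$. The integration-by-parts computation you outline is exactly what the paper carries out.

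The one substantive difference is the \emph{order} of the steps, and this is why you encounter the boundary exponent-matching obstacle that you then work around by shifting $m\mapsto 2$. The paper does not need this shift: it performs the local-term elimination for $\psi$ \emph{before} bringing in the $\varphi$-estimate. Concretely, the paper first combines Proposition~\ref{proposition:maniar} with the cutoff argument to obtain a clean bound
\[
I(1,\psi,\psi_\Gamma)\ \le\ C\!\iint_Q e^{-2s\alpha}(s^4\lambda^5\xi^4|g^0|^2+s\lambda\xi|g^1|^2)+C\!\iint_\Sigma e^{-2s\alpha}s\lambda\xi|g_\Gamma^1|^2+Cs^9\lambda^{11}\!\iint_{Q_\omega}\! e^{-2s\alpha}\xi^9|\varphi|^2,
\]
with no $\psi$ on the right. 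Only \emph{then} is Lemma~\ref{Lemma:Carleman:phi:phi:gamma} applied to $(\varphi,\varphi_\Gamma)$; the coupling contributions $C\iint_Q e^{-2s\alpha}\mathbbm{1}_{\mathcal{O}}|\psi|^2$ and $Cs^2\lambda^2\iint_\Sigma e^{-2s\alpha}\xi^2\mathbbm{1}_{\mathcal{G}}|\nabla_\Gamma\psi_\Gamma|^2$ are both dominated by $I(1,\psi,\psi_\Gamma)$ (with constant, not with a gain in $s,\lambda$), and one simply \emph{substitutes} the displayed bound above rather than attempting an absorption. Adding the two resulting inequalities gives \eqref{Carleman:coupled:phi:psi} directly. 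In particular, the ``matching exponents'' issue you describe is an artifact of adding before eliminating; with the paper's order there is nothing to absorb, so $m=1$ suffices and the exponents of the theorem are obtained exactly. Your $m=2$ variant would still produce a valid Carleman estimate, but with slightly larger powers on the $g^0,g^1,g_\Gamma^1$ terms than stated.
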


\begin{proof}
	We divide the proof into three steps:
	
	\noindent $\bullet$ \textit{ Step 1: Global estimates for $(\psi,\psi_\Gamma)$.} In this step, we perform an initial estimate for the variables $(\psi,\psi_\Gamma)$. To do this, we apply the Proposition \ref{proposition:maniar} to $(\psi,\psi_\Gamma)$ in  \eqref{adjoint:system:psi}:
	\begin{align}
		\label{Carleman:estimate:global:01}
		\begin{split} 
		&I(1,\psi,\psi_\Gamma)\\
		\leq & Cs\lambda \iint_{Q} e^{-2s\alpha}\xi |g^1|^2\,dx\,dt + Cs\lambda \iint_\Sigma e^{-2s\alpha}\xi |g_\Gamma ^1|^2\,dS\,dt\\
		&+Cs^4\lambda^5 \iint_{Q_{\omega''}} e^{-2s\alpha}\xi^4|\psi|^2\,dx\,dt\quad \forall s\geq s_1,\, \forall \lambda\geq\lambda_1.
		\end{split}
	\end{align}
	Our next task is to estimate the local term of $\psi$ in the right-hand side of \eqref{Carleman:estimate:global:01}.
	
	\noindent $\bullet$ \textit{ Step 2: Estimates of the local term of $\psi$.} Let $\omega'\subset \Omega$ such that $\omega''\Subset \omega'\Subset \omega\cap \mathcal{O}$. Now, define a function $\theta \in C^\infty(\overline{\Omega})$ such that 
	\begin{align*}
		\theta=1\text{ in }\omega'',\quad 0\leq \theta\leq 1\text{ in }\omega'\quad \text{ and }\theta\equiv 0\text{ in }\Omega\setminus \overline{\omega'}.
	\end{align*}
	
	In addition, we assume that $\theta$ satisfies 
	\begin{align}
		\label{assumptions:theta}
		\dfrac{|\nabla \theta|}{\theta^{1/2}} \in L^\infty(\omega')\quad \text{ and }\dfrac{\Delta \theta}{\theta^{1/2}}\in L^\infty(\omega').
	\end{align}
	
	Then, using the equation \eqref{adjoint:system:varphi} and integrating by parts in time and space, we see that 
	\begin{align}
		\nonumber 
		&s^4\lambda^5 \iint_{Q_{\omega'}} e^{-2s\alpha} \xi^4\theta |\psi|^2\,dx\,dt\\
		%\nonumber 
		%=&s^4\lambda^5 \iint_{Q_{\omega'}} e^{-2s\alpha} \xi^4 \theta \psi (L^*(\varphi)-g^0)\,dx\,dt\\
		\nonumber 
		=&s^4\lambda^5\iint_{Q_{\omega'}} e^{-2s\alpha} \xi^4 \theta g^1 \varphi\,dx\,dt -s^4\lambda^5 \iint_{Q_{\omega}'}e^{-2s\alpha} \xi^4 \theta \psi g^0\,dx\,dt \\
		\nonumber 
		&+s^4\lambda^5\iint_{Q_{\omega'}} \pt(e^{-2s\alpha} \xi^4 \theta)\psi \varphi\,dx\,dt+2s^4\lambda^5 \iint_{Q_{\omega'}} \nabla \left(e^{-2s\alpha}\xi^4 \theta \right) \cdot \nabla \psi \varphi \,dx\,dt\\
		\nonumber 
		&+s^4\lambda^5 \iint_{Q_\omega'}\Delta (e^{-2s\alpha} \xi^4 \theta)\psi \varphi\,dx\,dt\\
		\label{Carleman:estimate:global:02}
		=& I_1+I_2+I_3+I_4+I_5.
	\end{align}
	
	Now, we focus on estimating the terms $I_i$, $1\leq i\leq 5$. Firstly, we notice that 
	\begin{align*}
		I_1\leq Cs^8\lambda^{10} \iint_{Q_{\omega'}} e^{-2s\alpha} \xi^8 \theta |\varphi|^2\,dx\,dt + C\iint_{Q_{\omega'}} e^{-2s\alpha} \theta |g^1|^2\,dx\,dt.
	\end{align*}
	
	Secondly, by Young's inequality, for each $\delta_1>0$, there exists $C(\delta_1)>0$ such that 
	\begin{align*}
		I_2\leq C(\delta^1)s^4\lambda^5 \iint_{Q_{\omega'}} e^{-2s\alpha} \xi^4 \theta |g^0|^2\,dx\,dt +\dfrac{\delta_1}{2} s^4\lambda^5 \iint_{Q_{\omega'}} e^{-2s\alpha} \xi^4 \theta |\psi|^2\,dx\,dt.
	\end{align*}
	
	To estimate $I_3$, we take into account that $|\pt \alpha|\leq C\lambda \xi^2\text{ and }|\pt \xi|\leq C\lambda\xi^2$. In particular, this implies that $|\pt (e^{-2s\alpha}\xi^4 \theta )|\leq Cs\lambda e^{-2s\alpha}\xi^6$. Then for all $\delta_2>0$, there exists $C(\delta_2)>0$ such taht 
	\begin{align*}
		I_3\leq C(\delta_2) s^6\lambda^7 \iint_{Q_{\omega'}} e^{-2s\alpha} \xi^4 \theta |\varphi|^2\,dx\,dt +\dfrac{\delta_2}{2}s^4\lambda^5 \iint_{Q_{\omega'}} e^{-2s\alpha} \xi^4 \theta |\psi|^2\,dx\,dt.
	\end{align*}
	
	Besides, $I_4$ can be estimated in the following way
	\begin{align*}
		I_4\leq & Cs^9\lambda^{11} \iint_{Q_{\omega'}} e^{-2s\alpha} \xi^9 \theta |\varphi|^2\,dx\,dt +Cs^7\lambda^9 \iint_{Q_{\omega'}} e^{-2s\alpha} \xi^7 \dfrac{|\nabla \theta|^2}{\theta} |\varphi|^2\,dx\,dt\\
		&+Cs\lambda \iint_{Q_{\omega'}} e^{-2s\alpha} \xi \theta |\nabla \psi|^2\,dx\,dt.
	\end{align*}
	
	Now, notice that 
	\begin{align*}
		|\Delta (e^{-2s\alpha} \xi^4 \theta)|\leq Cs^2\lambda^2 e^{-2s\alpha} \xi^6 \theta +Cs\lambda e^{-2s\alpha} \xi^5 |\nabla \theta| + e^{-2s\alpha} \xi^4 |\Delta \theta|.
	\end{align*}
	
	Then, for all $\delta_3>0$, there exists $C(\delta_3)>0$ such that 
	\begin{align*}
		I_5 \leq & C(\delta_3) s^8 \lambda^9 \iint_{Q_{\omega'}} e^{-2s\alpha} \xi^8 \theta |\varphi|^2 \,dx\,dt 
		+C(\delta_3) s^6 \lambda^7\iint_{Q_{\omega'}} e^{-2s\alpha} \xi^6 \frac{|\nabla \theta|^2}{\theta} |\varphi|^2\,dx\,dt
		\\
		&+ C(\delta_3) s^4\lambda^5 \iint_{Q_{\omega'}} e^{-2s\alpha} \xi^4 \dfrac{|\Delta \theta|^2}{\theta} |\varphi|^2\,dx\,dt + \dfrac{\delta_3}{2} s^4 \lambda^5 \iint_{Q_{\omega'}} e^{-2s\alpha} \xi^4\theta |\psi|^2\,dx\,dt.
	\end{align*}
	
	Then, combining these estimates with \eqref{Carleman:estimate:global:02}, using the assumption \eqref{assumptions:theta}, choosing the $\delta_i's$ such that $\sum_{i=1}^3 \delta_i=1$, and using the fact that $\theta\equiv 1$ in $\omega''$ we obtain
	\begin{align}
		\label{Carleman:estimate:global:03}
		\begin{split} 
		&s^4\lambda^5\iint_{Q_{\omega''}} e^{-2s\alpha} \xi^4 |\psi|^2\,dx\,dt\\
		\leq & Cs^9\lambda^{11} \iint_{Q_{\omega''}} e^{-2s\alpha} \xi^9 \theta |\varphi|^2\,dx\,dt + C\iint_{Q_{\omega'}} e^{-2s\alpha} \theta |g^1|^2\,dx\,dt\\ 
		&+ Cs^4\lambda^5 \iint_{Q_{\omega'}} e^{-2s\alpha} \xi^4 \theta |g^0|^2\,dx\,dt +Cs\lambda \iint_{Q_{\omega'}} e^{-2s\alpha} \xi |\nabla \psi|^2\,dx\,dt.
		\end{split}
	\end{align} 
		
	Now, combining \eqref{Carleman:estimate:global:03} with \eqref{Carleman:estimate:global:01} and taking $s$ and $\lambda$ large enough, we see that 
	\begin{align}
		\label{Carleman:estimate:global:04}
		\begin{split}
			&I(1,\psi,\psi_\Gamma)\\
			\leq & Cs^4\lambda^5 \iint_{Q} e^{-2s\alpha} \xi^4 |g^0|^2\,dx\,dt +Cs\lambda \iint_{Q} e^{-2s\alpha} \xi |g^1|^2\,dx\,dt\\
			&+Cs\lambda \iint_{\Sigma} e^{-2s\alpha}\xi |g_\Gamma^1|^2\,dS\,dt + Cs^9\lambda^{11} \iint_{Q_{\omega'}} e^{-2s\alpha} \xi^9 |\varphi|^2\,dx\,dt
		\end{split}
	\end{align}
	for all $s\geq s_2$ and $\lambda\geq \lambda_2$. 
	
	\noindent $\bullet$ \textit{ Step 3: Global estimates for $(\varphi,\varphi_\Gamma)$.} Now, we apply the Lemma \ref{Lemma:Carleman:phi:phi:gamma} to $(\varphi,\varphi_\Gamma)$ solution of \eqref{adjoint:system:varphi}:
	 \begin{align}
	 	\label{Carleman:estimate:global:05}
	 	\begin{split} 
	 	&J(0,\varphi,\varphi_\Gamma)\\
	 	\leq & C\iint_{Q} e^{-2s\alpha} (|g^0|^2+ \mathbbm{1}_{\mathcal{O}} |\psi|^2)\,dx\,dt + C\iint_{\Sigma} e^{-2s\alpha} (\lambda |g_\Gamma^0|^2 + s^2\lambda^2 \xi^2 \mathbbm{1}_{\mathcal{G}} |\nabla_\Gamma \psi_\Gamma|^2 )\,dS\,dt\\
	 	&+Cs^3\lambda^4 \iint_{Q_{\omega''}} e^{-2s\alpha} \xi^3 |\varphi|^2\,dx\,dt.
	 	\end{split} 
	 \end{align}
	 
	 Then, combining \eqref{Carleman:estimate:global:04} and \eqref{Carleman:estimate:global:05}, we have 
	 \begin{align}
	 \label{Carleman:estimate:global:06}
	 	\begin{split}
	 	&J(0,\varphi,\varphi_\Gamma)\\
	 	\leq & C\iint_{Q} e^{-2s\alpha}(s^4\lambda^5 \xi^4 |g^0|^2+s\lambda \xi |g^1|^2)\,dx\,dt + C\iint_{\Sigma} e^{-2s\alpha} (\lambda |g_\Gamma^0|^2 + s\lambda |g_\Gamma^1|^2)\,dS\,dt\\
	 	&+Cs^9\lambda^{10}\iint_{Q_{\omega'}} e^{-2s\alpha} \xi^9 |\varphi|^2\,dx\,dt,\quad \forall s\geq s_2,\, \forall \lambda\geq \lambda_2,
	 	\end{split} 
	 \end{align}
	 for all solutions of the adjoint system \eqref{adjoint:system:psi}-\eqref{adjoint:system:varphi}. Finally, adding \eqref{Carleman:estimate:global:04} and \eqref{Carleman:estimate:global:06}, taking $s_3=\max\{s_1,s_2\}$ and $\lambda_3:=\max\{\lambda_1,\lambda_2\}$ we obtain \eqref{Carleman:coupled:phi:psi}.
\end{proof}

%%%%%%%%%%%%%%%%%%%%%%%%%%%%%%%%%%%%%%%%%%%%%%%%%%%%%
\section{Null controllability of the linear system}
\label{section:Null:controllability}
In this section, we devote to proving the null controllability of the linear coupled system \eqref{coupled:linear:system:y}-\eqref{coupled:linear:system:z}. 

\subsection{Observability inequality}

We start with a suitable observability inequality with weights that blow up only at $t\to 0$. Consider
\begin{align*}
	l(t):=\begin{cases}
		t(T-t) &\text{ if }0\leq t\leq T/2,\\
		T^2/4 &\text{ if }T/2<t\leq T.
	\end{cases}
\end{align*}

Now, define the new weight functions 
\begin{align*}
	\beta(x,t):=\dfrac{e^{2\lambda \|\eta^0\|_\infty}-e^{\lambda \eta^0(x)}}{l(t)},\quad \gamma(x,t):=\dfrac{e^{\lambda \eta^0(x)}}{l(t)},\quad \forall (x,t)\in \overline{\Omega}\times (0,T).
\end{align*} 

Moreover, we set 
\begin{align*}
	\beta^*(t):=\max_{x\in \overline{\Omega}} \beta(x,t),\quad \gamma^*(t):=\min_{x\in \overline{\Omega}} \gamma(x,t),\quad t\in (0,T)\\
	\hat{\beta}(t):=\min_{x\in \overline{\Omega}} \beta(x,t),\quad \hat{\gamma}(t):=\max_{x\in \overline{\Omega}}\gamma(x,t),\quad t\in (0,T).
\end{align*}

Then, we have the following observability inequality with weight functions that blow up only at $t=0$.
 
\begin{proposition}
\label{proposition:observability}
	Consider the condition {\bf (H1)} and define the weights $\beta$ and $\gamma$ corresponding to $\omega''\Subset \omega\cap \mathcal{O}$. Then, there exists a constant $C_{obs}>0$ such that
	\begin{align}
		\label{observability:varphi:psi}
		\begin{split}
			&\iint_Q e^{-2s\beta^*} ((\gamma^*)^3 |\varphi|^2+\gamma^* |\nabla \varphi|^2 + (\gamma^*)^4 |\psi|^2 + (\gamma^*)^2 |\nabla \psi|^2)\,dx\,dt\\
			&+\iint_\Sigma e^{-2s\beta^*} ((\gamma^*)^3 |\varphi_\Gamma|^2 + \gamma^* |\nabla_\Gamma \varphi_\Gamma|^2 + (\gamma^*)^4 |\psi_\Gamma|^2 + (\gamma^*)^2 |\nabla \psi_\Gamma|^2 )\,dS\,dt\\
			\leq & C_{\text{obs}} \iint_{Q} e^{-2s\hat{\beta}} (\hat{\gamma}^4 |g^0|^2 + \hat{\gamma}|g^1|^2)\,dx\,dt + C_{\text{obs}}\iint_\Sigma e^{-2s\hat{\beta}}(|g_\Gamma^0|^2 + |g_\Gamma^1|^2)\,dS\,dt\\
			 &+ C_{\text{obs}}\iint_{Q_\omega} e^{-2s\hat{\beta}} \hat{\gamma}^9 |\varphi|^2\,dx\,dt 
		\end{split}
	\end{align}
	for all solutions of \eqref{adjoint:system:psi}-\eqref{adjoint:system:varphi} with $(g^0,g_\Gamma^0),(g^1,g_\Gamma^1)\in L^2(',T;\mathbb{L}^2)$ and $(\psi^0,\psi_\Gamma^0)\in \mathbb{L}^2$.
\end{proposition}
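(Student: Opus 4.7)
The plan is to deduce the observability inequality \eqref{observability:varphi:psi} from the Carleman estimate \eqref{Carleman:coupled:phi:psi} by the classical two-step procedure. The weights $\alpha,\xi$ in \eqref{Carleman:coupled:phi:psi} blow up at both $t=0$ and $t=T$, whereas the weights $\beta,\gamma$ in \eqref{observability:varphi:psi} only blow up at $t=0$, so we split $(0,T)=(0,T/2)\cup(T/2,T)$ according to the definition of $l(t)$. On $(0,T/2)$ one has $l(t)=t(T-t)$, hence $\beta(x,t)=\alpha(x,t)$ and $\gamma(x,t)=\xi(x,t)$ pointwise; combining this with the definitional bounds $\beta^*(t)\ge\beta(x,t)$, $\gamma^*(t)\le\gamma(x,t)$, $\hat\beta(t)\le\beta(x,t)$, $\hat\gamma(t)\ge\gamma(x,t)$, every integrand on the LHS of \eqref{observability:varphi:psi} restricted to $t\in(0,T/2)$ is pointwise dominated by the corresponding integrand of $I(1,\psi,\psi_\Gamma)+J(0,\varphi,\varphi_\Gamma)$, so this portion of the LHS is immediately controlled by \eqref{Carleman:coupled:phi:psi}.

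\textbf{The interval $(T/2,T)$.} On this subinterval $\beta^*,\gamma^*,\hat\beta,\hat\gamma$ are constants in $t$, so it is enough to bound $\|(\varphi,\varphi_\Gamma)\|^2_{L^2(T/2,T;\mathbb{H}^1)}+\|(\psi,\psi_\Gamma)\|^2_{L^2(T/2,T;\mathbb{H}^1)}$ by the RHS of \eqref{observability:varphi:psi}. I pick a cutoff $\chi\in C^\infty([0,T])$ with $\chi\equiv 0$ on $[0,T/4]$ and $\chi\equiv 1$ on $[T/2,T]$, so that $(\chi\psi,\chi\psi_\Gamma)$ solves system \eqref{adjoint:system:psi} with source $(\chi'\psi+\chi g^1,\chi'\psi_\Gamma+\chi g_\Gamma^1)$ and vanishing initial data at $t=T/4$. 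Proposition \ref{prop:existence:heat:eq} then yields
\begin{equation*}
\|(\psi,\psi_\Gamma)\|^2_{L^2(T/2,T;\mathbb{H}^1)}\le C\|(\psi,\psi_\Gamma)\|^2_{L^2(T/4,T/2;\mathbb{L}^2)}+C\|(g^1,g_\Gamma^1)\|^2_{L^2(T/4,T;\mathbb{L}^2)},
\end{equation*}
and the first term on the right is absorbed by the contribution $\iint_Q e^{-2s\alpha}s^4\lambda^5\xi^4|\psi|^2\,dx\,dt$ that already appears in $I(1,\psi,\psi_\Gamma)$, since the weight $e^{-2s\alpha}\xi^4$ is bounded below by a positive constant on the slab $(T/4,T/2)$. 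A symmetric backward-in-time estimate gives the analogous bound for $(\varphi,\varphi_\Gamma)$, starting from its zero final data at $t=T$.

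\textbf{Main obstacle and conclusion.} The delicate point in the backward energy estimate for $(\varphi,\varphi_\Gamma)$ is the second-order boundary source $\nabla_\Gamma\!\cdot\!(\mathbbm{1}_{\mathcal{G}}\nabla_\Gamma\psi_\Gamma)$ in \eqref{adjoint:system:varphi}, which fails to lie in $L^2(\Sigma)$. I handle it in the weak sense using the Surface Divergence Theorem \eqref{surface:divergence:theorem}:
\begin{equation*}
\int_{T/2}^{T}\!\!\int_\Gamma \nabla_\Gamma\!\cdot\!(\mathbbm{1}_{\mathcal{G}}\nabla_\Gamma\psi_\Gamma)\,\varphi_\Gamma\,dS\,dt=-\int_{T/2}^{T}\!\!\int_{\mathcal{G}}\nabla_\Gamma\psi_\Gamma\cdot\nabla_\Gamma\varphi_\Gamma\,dS\,dt,
\end{equation*}
followed by Cauchy--Schwarz and Young's inequality: the resulting $\|\nabla_\Gamma\varphi_\Gamma\|^2_{L^2(T/2,T;L^2(\Gamma))}$ is absorbed into the $\mathbb{H}^1$ energy on the left, while $\|\nabla_\Gamma\psi_\Gamma\|^2_{L^2(T/2,T;L^2(\mathcal{G}))}$ was already controlled in the previous paragraph. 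Finally, every source term on the RHS of \eqref{Carleman:coupled:phi:psi} is brought into the form required on the RHS of \eqref{observability:varphi:psi} through the pointwise comparison $e^{-2s\alpha}\xi^k\le C\,e^{-2s\hat\beta}\hat\gamma^k$, which is immediate on $(0,T/2)$ and which on $(T/2,T)$ follows because $e^{-2s\alpha}$ decays faster than any polynomial as $t\to T$ while $\hat\beta,\hat\gamma$ remain constant; the analogous comparison holds on the boundary. Summing the Step-1 and Step-2 contributions then yields \eqref{observability:varphi:psi} with a constant $C_{\mathrm{obs}}$ depending on $s,\lambda$ (fixed once the Carleman parameters are chosen), on $T,\Omega,\omega,\mathcal{O},\mathcal{G}$, and on $\|R\|_{L^\infty(Q)}+\|R_\Gamma\|_{L^\infty(\Sigma)}$.
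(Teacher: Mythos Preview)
Your proposal is correct and follows essentially the same route as the paper: split $(0,T)$ at $T/2$, use $\alpha=\beta$, $\xi=\gamma$ on $(0,T/2)$ to invoke the Carleman estimate \eqref{Carleman:coupled:phi:psi} directly, and on $(T/2,T)$ use a cutoff $\chi$ vanishing on $[0,T/4]$ together with energy estimates to reduce to $L^2(T/4,T/2)$-norms that are then absorbed by the Carleman left-hand side. Your treatment of the second-order boundary source $\nabla_\Gamma\!\cdot(\mathbbm{1}_{\mathcal G}\nabla_\Gamma\psi_\Gamma)$ via the Surface Divergence Theorem is in fact more explicit than the paper, which simply records the resulting energy inequality \eqref{observability:01} and refers implicitly to Proposition~\ref{prop:existence:coupled:system}.
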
 

\begin{proof}
	Firstly, using the cut-off function $\chi \in C^2([0,T])$ such that $0\leq \chi \leq 1$ and 
	\begin{align*}
		\chi(t)=\begin{cases}
			0&\text{ if }t\in [0,T/4],\\
			1&\text{ if }t\in [T/2,T].
		\end{cases}
	\end{align*}
	it is easy to see that each solution $(\varphi,\varphi_\Gamma,\psi,\psi_\Gamma)$ of \eqref{adjoint:system:psi}-\eqref{adjoint:system:varphi} satisfies 
	\begin{align}
		\label{observability:01}
		\begin{split}
			&\|(\varphi,\varphi_\Gamma)\|_{L^\infty(T/2,T;\mathbb{L}^2)}^2 + \|(\varphi,\varphi_\Gamma)\|_{L^2(T/2,T;\mathbb{H}^1)}^2 +\|(\psi,\psi_\Gamma)\|_{L^\infty(T/2,T;\mathbb{L}^2)}^2\\
			 &+ \|(\psi,\psi_\Gamma)\|_{L^2(T/2,T;\mathbb{H}^1)}^2\\
			\leq & C\|(g^0,g_\Gamma^0)\|_{L^2(T/2,T;\mathbb{L}^2)}^2 + C\|(g^1,g_\Gamma^1)\|_{L^2(T/4,T/2;\mathbb{L}^2)}^2+ C\|(\varphi,\varphi_\Gamma)\|_{L^2(T/4,T/2;\mathbb{L}^2)}^2\\
			&+C\|(\psi,\psi_\Gamma)\|_{L^2(T/4,T/2;\mathbb{L}^2)}^2.
		\end{split}
	\end{align}
	
	Now, we focus on writing the Carleman estimate \eqref{Carleman:coupled:phi:psi} in terms of the new weights.
	
	Firstly, since $\alpha=\beta$ and since $l(t)=T^2/4$ in $[T/2,T]$ we easily see that the right-hand side of \eqref{Carleman:coupled:phi:psi} can be bounded as follows:
	\begin{align}
		\label{observability:02}
		\begin{split}
			&\iint_{Q} e^{-2s\alpha}(\xi^4 |g^0|^2+\xi |g^1|^2)\,dx\,dt + \iint_{\Sigma} e^{-2s\alpha}(|g^0|^2+|g_\Gamma^1|^2)\,dS\,dt + \iint_{Q_{\omega}} e^{-2s\alpha} \xi^9 |\varphi|^2\,dx\,dt\\
			\leq & C\iint_{Q} e^{-2s\beta} (\gamma^4 |g^0|^2+\gamma |g^1|^2)\,dx\,dt + C\iint_{\Sigma} e^{-2s\beta} (|g_\Gamma^0|^2+|g_\Gamma^1|^2)\,dS\,dt\\
			 &+ C\iint_{Q_{\omega}} e^{-2s\beta }\gamma^9 |\varphi|^2\,dx\,dt.
		\end{split}
	\end{align}
	
	Secondly, in $[0,T/2]$ the have $\alpha=\beta$ and $\xi=\gamma$ and therefore we have 
	\begin{align}
		\label{observability:03}
		\begin{split}
			&\int_0^{T/2}\int_\Omega e^{-2s\beta} (\gamma^3 |\varphi|^2+\gamma |\nabla \varphi|^2 + \gamma^4 |\psi|^2 + \gamma^2 |\nabla \psi|^2 )\,dx\,dt\\
			&+\int_0^{T/2}\int_{\Gamma} e^{-2s\beta}\left( \gamma^3 |\varphi_\Gamma|^2 + \gamma |\nabla_\Gamma \varphi_\Gamma|^2 +\gamma^4 |\psi_\Gamma|^2 + \gamma^2 |\nabla_\Gamma \psi_\Gamma|^2 \right)\,dS\,dt\\
			=&\int_0^{T/2}\int_\Omega e^{-2s\alpha}(\xi^3 |\varphi|^2+\xi |\nabla \varphi|^2 + \xi^4 |\psi|^2 + \xi^2 |\nabla \psi|^2)\,dx\,dt \\
			+&\int_0^{T/2}\int_{\Gamma} e^{-2s\alpha}(\xi^3 |\varphi_\Gamma|^2 + \xi |\nabla \varphi_\Gamma|^2 + \xi^4 |\psi_\Gamma|^2 + \xi^2|\nabla_\Gamma \psi_\Gamma|^2 )\,dS\,dt. 
		\end{split}
	\end{align}
	
	Besides, in $[T/2,T]$ we use \eqref{observability:01} to deduce that 
	\begin{align}
		\nonumber 
		&\int_{T/2}^T \int_\Omega e^{-2s\beta}(\gamma^3 |\varphi|^2 + \gamma |\nabla \varphi|^2 +\gamma^4 |\psi|^2+ \gamma^2 |\nabla \psi|^2 )\,dx\,dt \\
		\nonumber 
		&+\int_{T/2}^{T} \int_\Gamma (|\varphi_\Gamma|^2 +|\nabla_\Gamma \varphi_\Gamma|^2 + |\psi_\Gamma|^2 +|\nabla_\Gamma \psi_\Gamma|^2 )\,dS\,dt\\
		\nonumber 
		\leq & C \|(\varphi,\varphi_\Gamma)\|_{L^2(T/2,T;\mathbb{H}^1)} ^2 + \|(\psi,\psi_\Gamma)\|_{L^2(T/2,T;\mathbb{H}^1)}^2\\ 
		\label{observability:04}
		\begin{split}
		\leq & C \int_{T/2}^T \int_\Omega e^{-2s\beta} (\gamma^4 |g^0|^2 + \gamma |g^1|^2)\,dx\,dt + C\int_{T/2}^T \int_\Gamma e^{-2s\beta} (|g_\Gamma^0|^2 + |g_\Gamma^0|^2)\,dS\,dt\\
		&+ C\int_{T/4}^{T/2} \int_\Omega e^{-2s\beta}(\gamma^3 |\varphi|^2 +\gamma |\nabla \varphi|^2)\,dx\,dt + C\int_{T/4}^{T/2}\int_\Gamma e^{-2s\beta} (\gamma^3 |\varphi_\Gamma|^2 + |\nabla_\Gamma \varphi_\Gamma|^2 )\,dS\,dt\\
		&+ C\int_{T/4}^{T/2} \int_\Omega e^{-2s\beta}(\gamma^4 |\psi|^2 +\gamma^2 |\nabla \psi|^2)\,dx\,dt + C\int_{T/4}^{T/2}\int_\Gamma e^{-2s\beta} (\gamma^4 |\psi_\Gamma|^2 + |\nabla_\Gamma \psi_\Gamma|^2 )\,dS\,dt.
		\end{split} 
	\end{align}
	
	Now, combining \eqref{observability:03},\eqref{observability:04}, \eqref{Carleman:coupled:phi:psi} together (with $s$ and $\lambda$ fixed) \eqref{Carleman:estimate:global:02} we obtain
	\begin{align*}
		%\label{observability:05}
		\begin{split}
			&\iint_Q e^{-2s\beta} (\gamma^3 |\varphi|^2+\gamma |\nabla \varphi|^2 + \gamma^4 |\psi|^2 + \gamma^2 |\nabla \psi|^2)\,dx\,dt\\
			&+\iint_\Sigma e^{-2s\beta} (\gamma^3 |\varphi_\Gamma|^2 + \gamma |\nabla_\Gamma \varphi_\Gamma|^2 + \gamma^4 |\psi_\Gamma|^2 + \gamma^2 |\nabla \psi_\Gamma|^2 )\,dS\,dt\\
			\leq & C \iint_{Q} e^{-2s\beta} (\gamma^4 |g^0|^2 + \gamma|g^1|^2)\,dx\,dt + C\iint_\Sigma e^{-2s\beta}(|g_\Gamma^0|^2 + |g_\Gamma^1|^2)\,dS\,dt\\
			 &+ C\iint_{Q_\omega} e^{-2s\beta} \gamma^9 |\varphi|^2\,dx\,dt 
		\end{split}
	\end{align*}
	
	Finally, we take the minimum and maximum in the corresponding weight functions and we obtain \eqref{observability:varphi:psi}. This finishes the proof of Proposition \ref{proposition:observability}.
	
\end{proof}

\subsection{Null controllability}

Given the observability inequality , we restrict our attention to deduce a null controllability result of the linearized coupled system \eqref{coupled:linear:system:y}-\eqref{coupled:linear:system:z}.% For $r\in [1,+\infty]$, a linear space $\mathcal{H}$ and a measurable function $\rho:(0,T)\to \mathbb{R}$, we adopt the notation
%\begin{align*}
%	L^r(\rho(0,T);\mathcal{H}):=\{y\in L^r(0,T;\mathcal{H})\,:\, \rho y\in L^r(0,T;\mathcal{H})\}.
%\end{align*}

Given $\sigma,\kappa\geq \frac{5}{2}$, we define $r_\star:=\min\{\sigma,\kappa\}$ and $r^\star:=\max\{\sigma,\kappa\}$. Then, let us define the functions
\begin{align*}
	\mu_1(t):=\exp \left(\frac{s\beta^*}{2(r_\star-2)} \right) (\gamma^*)^{-\frac{1}{r^\star-2}}\quad \mu_2(t):=\exp\left(\dfrac{3s\beta^*}{4(r_\star-1)} \right)(\gamma^*)^{-\frac{9}{8(r^\star-1)}}\quad \forall \,t\in (0,T).
\end{align*} 

We introduce the Banach space
\begin{align*}
	\mathcal{C}_{\sigma,\kappa}:=\{ (y,y_\Gamma ,z,z_\Gamma ,v)\,:\, & (e^{s\hat{\beta}}(\hat{\gamma})^{-2} y, e^{s\hat{\beta}} y_\Gamma)\in L^2(0,T;\mathbb{L}^2),\, (e^{s\hat{\beta}} (\hat{\gamma})^{-1/2} z, e^{s\hat{\beta}} z_\Gamma)\in L^2(0,T;\mathbb{L}^2), \\
	& \left. e^{s\beta^*}(\gamma^*)^{-3/2}(L(y)-\mathbbm{1}_\omega v, L_\Gamma (y,y_\Gamma))\in L^2(0,T;\mathbb{L}^2), \right.\\
	& \left. e^{s\beta^*}(\gamma^*)^{-2}(L^*(z)-\mathbbm{1}_{\mathcal{O}}y,L_\Gamma^*(z,z_\Gamma)-\nabla_\Gamma (\mathbbm{1}_{\mathcal{G}} \nabla_\Gamma y_\Gamma))\in L^2(0,T;\mathbb{L}^2), \right.\\
	& \left. e^{s\hat{\beta}}\hat{\gamma}^{-9/2} v\in L^2(Q_\omega),\, (z(\cdot,T),z_\Gamma(\cdot,T))=(0,0)\text{ in }\Omega\times \Gamma, \right.\\
	&\mu_i\cdot (y,y_\Gamma,z,z_\Gamma)\in \mathbb{D}^2,\,i=1,2\}
	\end{align*}
endowed by its natural norm.

\begin{remark}
Let us point out that if $(y,y_\Gamma,z,z_\Gamma) \in \mathcal{C}_{\sigma,\kappa}$, then we necesarily have  $$(z(\cdot,0),z_\Gamma(\cdot,0))=(0,0)\text{ in }\Omega\times \Gamma.$$

This is due that the weight $e^{s\hat{\beta}}$ blows up at $t\to 0^+$. 
\end{remark}

We have the following null controllability result for the linearized system:

\begin{proposition}
	\label{proposition:null:control:C} Let $\sigma,\kappa> \frac{5}{2}$.
	Suppose that 
	\begin{align}
		\label{conditions:sources:f}
		e^{s\beta^*}(\gamma^*)^{-3/2}(f^0,f_\Gamma^0)\in L^2(0,T;\mathbb{L}^2),\quad e^{s\beta^*}(\gamma^*)^{-2}(f^1,f_\Gamma^1)\in L^2(0,T;\mathbb{L}^2).
	\end{align}
	
	There exists $v\in L^2(Q_\omega)$ such that if $(y,y_\Gamma,z,z_\Gamma)$ is the solution of \eqref{coupled:linear:system:y}-\eqref{coupled:linear:system:z} with control $v$ and source terms $(f^0,f_\Gamma^0)$ and $(f^1,f_\Gamma^1)$, then $(y,y_\Gamma,z,z_\Gamma)$ belongs to $\mathcal{C}_{\sigma,\kappa}$.
\end{proposition}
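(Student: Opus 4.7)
The plan is to derive Proposition \ref{proposition:null:control:C} from the observability inequality \eqref{observability:varphi:psi} through the classical Fursikov--Imanuvilov duality scheme, and then to upgrade the regularity of the controlled trajectory via a multiplier argument based on the weights $\mu_1,\mu_2$.

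\textbf{Step 1 (Control via duality).} I would introduce the vector space $P_0$ of smooth quadruples $(\varphi,\varphi_\Gamma,\psi,\psi_\Gamma)$ solving the adjoint system \eqref{adjoint:system:psi}-\eqref{adjoint:system:varphi} with sufficiently regular data, and define on $P_0$ the symmetric bilinear form
\begin{align*}
a((\varphi,\psi),(\tilde\varphi,\tilde\psi))
:=&\iint_Q e^{-2s\hat\beta}\bigl(\hat\gamma^{4}g^0\tilde g^0+\hat\gamma g^1\tilde g^1\bigr)\,dxdt
+\iint_\Sigma e^{-2s\hat\beta}\bigl(g_\Gamma^0\tilde g_\Gamma^0+g_\Gamma^1\tilde g_\Gamma^1\bigr)\,dSdt\\
&+\iint_{Q_\omega}e^{-2s\hat\beta}\hat\gamma^{9}\varphi\tilde\varphi\,dxdt,
\end{align*}
which is an inner product on $P_0$ by \eqref{observability:varphi:psi}. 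Letting $P$ be its completion, the linear form
\[
\ell(\varphi,\psi):=\iint_Q(f^0\varphi+f^1\psi)\,dxdt+\iint_\Sigma(f_\Gamma^0\varphi_\Gamma+f_\Gamma^1\psi_\Gamma)\,dSdt
\]
is continuous on $P$ by Cauchy--Schwarz combined with \eqref{conditions:sources:f} and the left-hand side of \eqref{observability:varphi:psi}. Riesz representation yields a unique $(\hat\varphi,\hat\psi)\in P$ with $a((\hat\varphi,\hat\psi),\cdot)=\ell$, and I would set $v:=\mathbbm{1}_\omega e^{-2s\hat\beta}\hat\gamma^{9}\hat\varphi$. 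The identity $a((\hat\varphi,\hat\psi),\cdot)=\ell(\cdot)$ is then the transposition formulation of \eqref{coupled:linear:system:y}-\eqref{coupled:linear:system:z} driven by $v$ with vanishing initial data, and defines the solution $(y,y_\Gamma,z,z_\Gamma)$.

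\textbf{Step 2 (Weighted $L^2$ bounds and null trace at $t=0$).} The inner product $a$ directly gives $e^{s\hat\beta}\hat\gamma^{-9/2}v\in L^2(Q_\omega)$. Testing the variational identity against cleverly chosen data $(g^0,g_\Gamma^0,g^1,g_\Gamma^1)$ then produces the bounds $e^{s\hat\beta}\hat\gamma^{-2}y$, $e^{s\hat\beta}y_\Gamma$, $e^{s\hat\beta}\hat\gamma^{-1/2}z$, $e^{s\hat\beta}z_\Gamma\in L^2(0,T;\mathbb L^2)$, the powers of $\hat\gamma$ being dual to the powers of $\gamma^*$ on the left-hand side of \eqref{observability:varphi:psi}. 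The weighted integrability of $L(y)-\mathbbm{1}_\omega v=f^0$, $L_\Gamma(y,y_\Gamma)=f_\Gamma^0$, $L^*(z)-\mathbbm{1}_{\mathcal O}y=f^1$ and $L_\Gamma^*(z,z_\Gamma)-\nabla_\Gamma\!\cdot\!(\mathbbm{1}_{\mathcal G}\nabla_\Gamma y_\Gamma)=f_\Gamma^1$ then reduces to \eqref{conditions:sources:f}. Finally, the blow-up of $e^{s\hat\beta}$ as $t\to 0^+$ forces the null-controllability condition $(z(\cdot,0),z_\Gamma(\cdot,0))=(0,0)$ highlighted in the remark preceding the proposition.

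\textbf{Step 3 ($\mathbb{D}^2$-regularity via $\mu_i$).} The decisive remaining point is to prove $\mu_i(y,y_\Gamma,z,z_\Gamma)\in\mathbb{D}^2$ for $i=1,2$. I would apply Proposition \ref{prop:existence:coupled:system} to $(Y,Y_\Gamma,Z,Z_\Gamma):=\mu_i(y,y_\Gamma,z,z_\Gamma)$, which satisfies a coupled system with the same structure as \eqref{coupled:system:existence:linear:y}-\eqref{coupled:system:existence:linear:z}, vanishing initial data at $t=0$, and source terms of the form $\mu_i f^0+\mu_i'y+\mu_i\mathbbm{1}_\omega v$, $\mu_i f_\Gamma^0$, $\mu_i f^1+\mu_i'z$, $\mu_i f_\Gamma^1$, together with the extra coupling $\mu_i\nabla_\Gamma\!\cdot\!(\mathbbm{1}_{\mathcal G}\nabla_\Gamma y_\Gamma)$ on $\Sigma$. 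The exponents $1/(r^\star-2)$ and $9/(8(r^\star-1))$ appearing in $\mu_1,\mu_2$ are calibrated precisely so that $|\mu_i|$ and $|\mu_i'|$ are dominated pointwise in $t$ by the weights already controlling $(y,y_\Gamma,z,z_\Gamma,v)$ via Step 2; the right-hand sides therefore belong to $L^2(0,T;\mathbb L^2)$, and Proposition \ref{prop:existence:coupled:system} delivers $\mathbb E$-regularity, hence $\mathbb D$-regularity via the embedding $\mathbb E\hookrightarrow\mathbb D$.

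The main obstacle I expect is precisely this exponent bookkeeping in Step 3: the powers inside $\mu_1,\mu_2$ must be compatible simultaneously with (i) the weighted $L^2$ output of Steps 1--2, (ii) the parabolic regularity afforded by Proposition \ref{prop:existence:coupled:system}, and (iii) the presence of the second-order boundary coupling $\nabla_\Gamma\!\cdot\!(\mathbbm{1}_{\mathcal G}\nabla_\Gamma y_\Gamma)$, which must be absorbed using the weighted bound on $\nabla_\Gamma\psi_\Gamma$ built into the observability inequality. This is where the lower bound $\sigma,\kappa>\tfrac52$ enters, fixing the smallest admissible exponents in $\mu_1,\mu_2$ and ultimately dictating the restriction (H2) on the nonlinearities.
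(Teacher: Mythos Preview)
Your proposal is correct and follows essentially the same Fursikov--Imanuvilov duality scheme as the paper. Two minor differences are worth noting. First, the paper parametrizes $P_0$ not as adjoint solutions but as generic quadruples $(\varphi,\varphi_\Gamma,\psi,\psi_\Gamma)$ for which $L^*(\varphi)-\mathbbm{1}_{\mathcal O}\psi$, $L_\Gamma^*(\varphi,\varphi_\Gamma)-\nabla_\Gamma\!\cdot(\mathbbm{1}_{\mathcal G}\nabla_\Gamma\psi_\Gamma)$, $L(\psi)$, $L_\Gamma(\psi,\psi_\Gamma)$ lie in $L^2$; it then \emph{defines} $\hat y,\hat y_\Gamma,\hat z,\hat z_\Gamma$ explicitly as these weighted expressions evaluated at the Lax--Milgram minimizer, so the weighted $L^2$ bounds in your Step~2 drop out immediately from $a((\hat\varphi,\hat\psi),(\hat\varphi,\hat\psi))<\infty$ without any ``clever testing''. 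Second, the obstacle you anticipate in Step~3 concerning the boundary coupling is illusory: since $\mu_i$ depends only on $t$, one has $\mu_i\,\nabla_\Gamma\!\cdot(\mathbbm{1}_{\mathcal G}\nabla_\Gamma y_\Gamma)=\nabla_\Gamma\!\cdot(\mathbbm{1}_{\mathcal G}\nabla_\Gamma Y_\Gamma)$, which is exactly the structural coupling already absorbed by Proposition~\ref{prop:existence:coupled:system}; the only genuine check is the pointwise bound $|\mu_i'|\le C e^{s\hat\beta}$, which is where $\sigma,\kappa>\tfrac52$ enters.
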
 
\begin{proof}
	Let us define the space
	\begin{align*}
		P_0=\{(\varphi,\varphi_\Gamma,\psi,\psi_\Gamma)\,:&\, (\varphi,\varphi_\Gamma),(\psi,\psi_\Gamma)\in L^2(0,T;\mathbb{H}^1)\,,\, (L^*(\varphi)-\psi \mathbbm{1}_{\mathcal{O}}) \in L^2(Q), \\
		 & L_\Gamma^*(\varphi,\varphi_\Gamma)-\nabla_\Gamma (\mathbbm{1}_{\mathcal{G}} \nabla_\Gamma \psi_\Gamma) \in L^2(\Sigma) \, , \, L(\psi)\in L^2(Q), \\
		 & L_\Gamma(\psi,\psi_\Gamma)\in L^2(\Sigma)\}
	\end{align*}
	
	Consider the bilinear form $a:P_0\times P_0 \to \mathbb{R}$ given by 
	\begin{align*}
		&a((\hat{\varphi},\hat{\varphi}_\Gamma,\hat{\psi},\hat{\psi}_\Gamma),(\varphi,\varphi_\Gamma,\psi,\psi_\Gamma))\\
		=& \iint_{Q} e^{-2s\hat{\beta}} \hat{\gamma}^4 (L^*(\hat{\varphi})- \mathbbm{1}_{\mathcal{O}} \hat{\psi}) (L^*(\varphi)-\mathbbm{1}_\mathcal{O} \psi)\,dx\,dt+\iint_{Q} e^{-2s\hat{\beta}}\hat{\gamma} L(\hat{\psi}) L(\psi)\,dx\,dt\\
		&+\iint_\Sigma e^{-2s\hat{\beta}} (L_\Gamma^*(\hat{\varphi},\hat{\varphi}_\Gamma) -\nabla_\Gamma (\mathbbm{1}_\mathcal{G} \nabla_\Gamma \hat{\psi}_\Gamma))(L_\Gamma^*(\varphi,\varphi_\Gamma) -\nabla_\Gamma (\mathbbm{1}_\mathcal{G} \nabla_\Gamma \psi_\Gamma))\,dS\,dt\\
		&+\iint_\Sigma e^{-2s\hat{\beta}} L_\Gamma (\hat{\psi},\hat{\psi}_\Gamma)L_\Gamma (\psi,\psi_\Gamma)\,dS\,dt + \iint_{Q_\omega} e^{-2s\hat{\beta}} \hat{\gamma}^9 \hat{\varphi}\varphi \,dx\,dt.
	\end{align*}
	
	Moreover, define the linear form $G:P_0\to \mathbb{R}$ by 
	\begin{align*}
		G(\varphi,\varphi_\Gamma,\psi,\psi_\Gamma):=\iint_{Q} (f^0\varphi + f^1\psi)\,dx\,dt + \iint_{\Sigma} (f_\Gamma^0 \varphi_\Gamma +f_\Gamma^1 \psi_\Gamma)\,dSdt.
	\end{align*}
	
	Thanks to observability inequality \eqref{observability:varphi:psi}, it is easy to see that $$\|(\varphi,\varphi_\Gamma,\psi,\psi_\Gamma)\|_P:= [a((\varphi,\varphi_\Gamma,\psi,\psi_\Gamma),(\varphi,\varphi_\Gamma,\psi,\psi_\Gamma))]^{1/2}$$
	defines a norm on $P_0$.
	
	Let $P$ be the completion of $P_0$ by the norm $\|\cdot\|_P$. Besides, it is clear that $a(\cdot,\cdot)$ is continuous and coercive on $P\times P$. We also notice that for each $(\varphi,\varphi_\Gamma,\psi,\psi_\Gamma)\in P_0$, we have 
	\begin{align*}
		G(\varphi,\varphi_\Gamma,\psi,\psi_\Gamma)\leq M \|(\varphi,\varphi_\Gamma,\psi,\psi_\Gamma)\|_P,
	\end{align*}
	where $M$ is given by 
	\begin{align*}
		\begin{split} 
		M:=&C_{\text{obs}}^{1/2}\left( \|e^{s\beta^*}(\gamma^*)^{-3/2}(f^0,f_\Gamma^0)\|_{L^2((0,T);\mathbb{L}^2)} + \|e^{s\beta^*}(\gamma^*)^{-2} (f^1,f_\Gamma^1)\|_{L^2()(0,T);\mathbb{L}^2)}  \right), 
		\end{split}
	\end{align*}
	where $C_{\text{obs}}>0$ is the constant used in the Observability inequality \eqref{observability:varphi:psi}. Then, by Hahn-Banach Theorem, $G$ can be extended to a continuous functional on $P$. Consequently, by Lax-Milgram Lemma, the problem: Find $(\hat{\varphi},\hat{\varphi}_\Gamma,\hat{\psi},\hat{\psi}_\Gamma)\in P$ such that 
	\begin{align}
		\label{null:control:01}
		a((\hat{\varphi},\hat{\varphi}_\Gamma,\hat{\psi},\hat{\psi}_\Gamma),(\varphi,\varphi_\Gamma,\psi,\psi_\Gamma))=G(\varphi,\varphi_\Gamma,\psi,\psi_\Gamma)\quad \forall (\varphi,\varphi_\Gamma,\psi,\psi_\Gamma)\in P
	\end{align}
	has a unique solution. Now, define 
	\begin{align*}
		&\hat{y}:=e^{-2s\hat{\beta}}\hat{\gamma}^4(L^*(\hat{\varphi})-\hat{\psi}\mathbbm{1}_{\mathcal{O}}),\quad \hat{z}:=e^{-2s\hat{\beta}}\hat{\gamma} L(\psi),\\
		&\hat{y}_\Gamma:=e^{-2s\hat{\beta}}(L_\Gamma^*(\hat{\varphi},\hat{\varphi}_\Gamma) - \nabla_\Gamma (\mathbbm{1}_{\mathcal{G}} \nabla_\Gamma \hat{\psi}_\Gamma)),\quad \hat{z}_\Gamma:=e^{-2s\hat{\beta}} L_\Gamma (\hat{\psi},\hat{\psi}_\Gamma),\\
		& \hat{v}:= -e^{-2s\hat{\beta}} \hat{\gamma}^9 \hat{\varphi}.
	\end{align*}
	
	Then, the identity \eqref{null:control:01} reads as follows:
	\begin{align*}
		\begin{split}
		&\iint_{Q} (\hat{y}(L^*(\varphi)-\psi\mathbbm{1}_{\mathcal{O}}) +\hat{z} L(\psi))\,dx\,dt\\
		&+\iint_{\Sigma} (\hat{y}_\Gamma (L_\Gamma^*(\varphi,\varphi_\Gamma)-\nabla_\Gamma (\mathbbm{1}_{\mathcal{G}} \nabla_\Gamma \psi_\Gamma)) +\hat{z}_\Gamma L_\Gamma (\psi,\psi_\Gamma) )\,dS\,dt\\
		=&\iint_{Q_\omega}\hat{v}\varphi \,dx\,dt + \iint_{Q} (f^0\varphi +f^1\psi)\,dx\,dt + \iint_{\Sigma }(f_\Gamma^0 \varphi_\Gamma +f_\Gamma^1 \psi_\Gamma)\,dS\,dt,
		\end{split}
	\end{align*}
	which implies that $(\hat{y},\hat{y}_\Gamma,\hat{z},\hat{z}_\Gamma)$ is a distributional solution of \eqref{coupled:linear:system:y}-\eqref{coupled:linear:system:z} where $\hat{v}$ is a control driving the state $(\hat{y},\hat{y}_\Gamma,\hat{z},\hat{z}_\Gamma)$ to the rest at $t=T$. Indeed, by condition \eqref{conditions:sources:f}, we deduce that $(\hat{y},\hat{y}_\Gamma,\hat{z},\hat{z}_\Gamma)\in \mathbb{E}$ is a strong solution. On the other hand, we notice that
	\begin{align}
		\label{regu:sol:01}
		\begin{split} 
		&\iint_{Q} e^{2s\hat{\beta}}((\hat{\gamma})^{-4} |\hat{y}|^2 + (\hat{y})^{-2}|\hat{z}|^2)\,dx\,dt + \iint_{\Sigma} e^{2s\hat{\beta}}(|\hat{y}_\Gamma|^2+|\hat{z}_\Gamma|^2)\,dS\,dt \\
		&+ \iint_{Q_\omega} e^{2s\hat{\beta}}\hat{\gamma}^{9} |\hat{v}|^2\,dx\,dt
		=a((\hat{\varphi},\hat{\varphi}_\Gamma,\psi,\psi_\Gamma),(\hat{\varphi},\hat{\varphi}_\Gamma,\hat{\psi},\hat{\psi}_\Gamma))<+\infty.
		\end{split} 
	\end{align}
	
	It remains to prove that 
	$
		\mu_i\cdot  (\hat{y},\hat{y}_\Gamma,\hat{z},\hat{z}_\Gamma)\in \mathbb{D}^2
	$, for $i=1,2$. To do this, let us define the new variables 
	\begin{align*}
		\begin{split}
		(y^*,y_\Gamma^*,z^*,z_\Gamma^*):=&\mu_1\cdot (\hat{y},\hat{y}_\Gamma,\hat{z},\hat{z}_\Gamma),\quad
		(f_*^0,f_{\Gamma,*}^0)=\mu_1\cdot (f^0+\mathbbm{1}_\omega \hat{v},f_\Gamma^0),\\ &(f_*^1,f_{*,\Gamma}^1)=\mu_1\cdot (f^1,f_\Gamma^1).
		\end{split}
	\end{align*}
	
	Then, it is easy to see that $(y^*,y_\Gamma^*,z^*,z_\Gamma^*)$ is (at least) a distributional solution of the coupled system 
	\begin{align}
		\label{regu:sol:02}
		\begin{cases}
			L(y^*)=f_*^0 +\mu_{1,t} \hat{y}&\text{ in }Q,\\
			L_\Gamma(y^*,y_\Gamma^*)=f_{\Gamma,*}^0 + \mu_{1,t} \hat{y}_\Gamma&\text{ on }\Sigma,\\
			y^*\big|_\Gamma=y_\Gamma^*&\text{ on }\Sigma,\\
			(y^*(\cdot,0),y_\Gamma^*(\cdot,0))=(0,0)&\text{ in }\Omega\times \Gamma.
		\end{cases}
	\end{align}
	
	\begin{align}
		\label{regu:sol:03}
			\begin{cases}
			L^*(z^*)=f_* ^1 + \mathbbm{1}_{\mathcal{O}}y^* + \mu_{1,t} \hat{z}&\text{ in }Q,\\
			L_{\Gamma}^{*}(z^*,z_\Gamma^*)=f_{\Gamma,*}^1 + \nabla_\Gamma \cdot (\mathbbm{1}_{\mathcal{G}} \nabla_\Gamma \hat{y}_\Gamma) + \mu_{1,t} \hat{z}_\Gamma &\text{ on }\Sigma,\\
			z^*\big|_\Gamma =z_\Gamma^*&\text{ on }\Sigma,\\
			(z^*(\cdot,T),z_\Gamma^*(\cdot,T))=(0,0)&\text{ in }\Omega\times \Gamma.
		\end{cases}
	\end{align}
	
	Now, notice that for each $\sigma,\kappa\geq \frac{5}{2}$, we obtain  $\frac{1}{2(r_*-2)}<1$. Then, straightforward computations show that there exists a constant $C>0$ such that 
	\begin{align}
		\label{regu:sol:04}
		\mu_{1,t}\leq Ce^{s\hat{\beta}}\text{ in }(0,T).
	\end{align}
	
	From \eqref{regu:sol:01} and \eqref{regu:sol:04}, we deduce that 
	\begin{align*}
		(f_*^0 + \mu_{1,t} \hat{y}, f_{\Gamma,*}^0 + \mu_{1,t} \hat{y}_\Gamma)\in L^2(0,T;\mathbb{L}^2),\quad 
		(f_* ^1 + \mu_{1,t} \hat{z}, 
		f_{\Gamma,*}^1 + \mu_{1,t} \hat{z}_\Gamma) \in L^2(0,T;\mathbb{L}^2).
	\end{align*}
	
	This means that $(y^*,y_\Gamma^*,z^*,z_\Gamma^*)$ is indeed a strong solution of \eqref{regu:sol:02}-\eqref{regu:sol:03} with the regularity property $		(y^*,y_\Gamma^*,z^*,z_\Gamma^*)\in \mathbb{D}^2
	$. Similarly, we can prove that $\mu_2 (\hat{y},\hat{y}_\Gamma,\hat{z},\hat{z}_\Gamma)\in \mathbb{D}^2$. Therefore, $(\hat{y},\hat{y}_\Gamma,\hat{z},\hat{z}_\Gamma)\in \mathcal{C}_{\sigma,\kappa}$, which completes the proof of Proposition \ref{proposition:null:control:C}.
\end{proof}

%%%%%%%%%%%%%%%%%%%%%%%%%%%%%%%%%%%%%%%%%%%%%%%%%%%%%
\section{Proof of the main result}
\label{section:proof:main:theorem}
In this section, we prove the local null controllability of the coupled system \eqref{Coupled:system:nonlinear:y}-\eqref{Coupled:system:nonlinear:z}. In order to do this, we recall a local inversion mapping theorem in Banach spaces (see \cite{Alekseev1987Optimal}, page 107). 

\begin{theorem}
	\label{thm:Alexseev}
	Let $\mathcal{B}_1$ and $\mathcal{B}_2$ be two Banach spaces and let $\mathcal{A}:\mathcal{B}_1\to \mathcal{B}_2$ be a $C^1(\mathcal{B}_1; \mathcal{B}_2)$ function. 
	Suppose that $b_1\in \mathcal{B}_1$, $b_2=\mathcal{A}(b_1)$ and that 
	$\mathcal{A}'(b_1):\mathcal{B}_1\to \mathcal{B}_2$ is surjective. Then, there exists
	$\delta>0$ such that, for every $b'\in \mathcal{B}_2$ satisfying $\|b'-b_2\|_{\mathcal{B}_2 }<\delta$, there exists a solution of the equation 
	\begin{align*}
		\mathcal{A}(b)=b',\quad b\in \mathcal{B}_1.
	\end{align*}
	
	Moreover, there exists a constant $C>0$ such that 
	\begin{align*}
		\|b_1-b\|_{\mathcal{B}_1}\leq C\|b_2-b'\|_{\mathcal{B}_2}.
	\end{align*}
	
\end{theorem}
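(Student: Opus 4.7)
The plan is to prove this as a version of the Lyusternik–Graves surjection theorem via a Newton-type iteration, reducing first to the case $b_1=0$, $b_2=0$ by the obvious translation $\tilde{\mathcal{A}}(b):=\mathcal{A}(b_1+b)-b_2$, which is still $C^1$ with $\tilde{\mathcal{A}}'(0)=\mathcal{A}'(b_1)=:L$ surjective. So from now on I assume $b_1=0$ and $b_2=0$, and I look for a small $b\in\mathcal{B}_1$ with $\mathcal{A}(b)=b'$ whenever $\|b'\|_{\mathcal{B}_2}<\delta$.

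The first key ingredient is the open mapping theorem applied to $L:\mathcal{B}_1\to\mathcal{B}_2$. Surjectivity of $L$ implies that the induced map $\mathcal{B}_1/\ker L\to\mathcal{B}_2$ is a Banach isomorphism; equivalently, there exists a constant $C_0>0$ such that, for every $y\in\mathcal{B}_2$, we can pick some $x\in\mathcal{B}_1$ (not necessarily unique or linear in $y$) with $Lx=y$ and $\|x\|_{\mathcal{B}_1}\leq C_0\|y\|_{\mathcal{B}_2}$. This is the substitute for a bounded linear right inverse, and it is all the iteration will need. The second ingredient comes from $\mathcal{A}\in C^1$: there exists $r>0$ such that for every $b\in\mathcal{B}_1$ with $\|b\|_{\mathcal{B}_1}\leq r$ one has $\|\mathcal{A}'(b)-L\|_{\mathcal{L}(\mathcal{B}_1,\mathcal{B}_2)}\leq\varepsilon$, where $\varepsilon$ is chosen small enough that $C_0\varepsilon\leq 1/2$.

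With these in hand, I would set up the iteration $b^{(0)}=0$ and, having defined $b^{(n)}$, choose $\delta^{(n)}\in\mathcal{B}_1$ satisfying
\begin{equation*}
L\delta^{(n)}=b'-\mathcal{A}(b^{(n)}),\qquad \|\delta^{(n)}\|_{\mathcal{B}_1}\leq C_0\|b'-\mathcal{A}(b^{(n)})\|_{\mathcal{B}_2},
\end{equation*}
and set $b^{(n+1)}:=b^{(n)}+\delta^{(n)}$. The heart of the proof is the identity
\begin{equation*}
\mathcal{A}(b^{(n+1)})-b'=\mathcal{A}(b^{(n)}+\delta^{(n)})-\mathcal{A}(b^{(n)})-L\delta^{(n)}=\int_0^1\bigl(\mathcal{A}'(b^{(n)}+t\delta^{(n)})-L\bigr)\delta^{(n)}\,dt,
\end{equation*}
which, provided the iterates stay in the ball of radius $r$, yields
\begin{equation*}
\|\mathcal{A}(b^{(n+1)})-b'\|_{\mathcal{B}_2}\leq\varepsilon\|\delta^{(n)}\|_{\mathcal{B}_1}\leq C_0\varepsilon\|\mathcal{A}(b^{(n)})-b'\|_{\mathcal{B}_2}\leq\tfrac{1}{2}\|\mathcal{A}(b^{(n)})-b'\|_{\mathcal{B}_2},
\end{equation*}
and hence $\|\delta^{(n)}\|_{\mathcal{B}_1}\leq C_0\,2^{-n}\|b'\|_{\mathcal{B}_2}$. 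This shows that the tail $\sum_{k\geq n}\|\delta^{(k)}\|_{\mathcal{B}_1}\leq 2C_0\,2^{-n}\|b'\|_{\mathcal{B}_2}$ is a geometric series, so $\{b^{(n)}\}$ is Cauchy; denoting its limit by $b$, continuity of $\mathcal{A}$ gives $\mathcal{A}(b)=b'$, and summing the geometric series from $n=0$ yields $\|b\|_{\mathcal{B}_1}\leq 2C_0\|b'\|_{\mathcal{B}_2}$, which is the desired estimate with $C=2C_0$ after translating back.

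The delicate point, and the main obstacle, is the bootstrapping needed to guarantee that every $b^{(n)}$ remains in the ball $\{\|b\|_{\mathcal{B}_1}\leq r\}$ where the bound $\|\mathcal{A}'(\cdot)-L\|\leq\varepsilon$ holds; this is not automatic because the geometric-decay estimate was itself derived under that hypothesis. I would handle this by induction: fix $\delta:=r/(4C_0)$, and assume $\|b'\|_{\mathcal{B}_2}<\delta$; then inductively $\|b^{(n)}\|_{\mathcal{B}_1}\leq\sum_{k<n}\|\delta^{(k)}\|_{\mathcal{B}_1}\leq 2C_0\|b'\|_{\mathcal{B}_2}<r/2<r$, which keeps the iteration inside the good ball and closes the argument. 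Everything else—the continuity of $\mathcal{A}'$ at $b_1$, the Bochner integral representation of $\mathcal{A}(b^{(n)}+\delta^{(n)})-\mathcal{A}(b^{(n)})-L\delta^{(n)}$, and the completeness of $\mathcal{B}_1$—is standard.
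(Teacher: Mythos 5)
Your proof is correct: the translation to $b_1=0$, the open-mapping selection of approximate right inverses with uniform constant $C_0$, the Newton-type iteration with the mean-value integral identity, and the bootstrap keeping the iterates (and the connecting segments) inside the ball where $\|\mathcal{A}'(\cdot)-L\|\leq\varepsilon$ are all sound, and they yield both the solvability and the Lipschitz estimate with $C=2C_0$. The paper itself gives no proof of this theorem --- it is quoted from the reference [Alekseev--Tikhomirov--Fomin, p.~107] --- and your argument is essentially the classical Lyusternik--Graves iteration proved there, so there is nothing to reconcile.
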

	
Now, let us prove the main theorem of this paper:
	
\begin{proof}[Proof of Theorem \ref{thm:nonlinear:controllability:coupled:system}]
	We just analyze the case $d=3$. The case $d=2$ is left to the reader. For $p,q$ given in {\bf (H1)}, we define the Banach spaces
	\begin{align*}
		\mathcal{B}_1:=\mathcal{C}_{p,q},\quad \mathcal{B}_2:= L^2(e^{2s\beta^*}(\gamma^*)^{-3}(0,T);\mathbb{L}^2)\times L^2(e^{2s\beta^*}(\gamma^*)^{-4}(0,T);\mathbb{L}^2)\times \mathbb{L}^2\times \mathbb{L}^2.
	\end{align*}
	
	Consider the operator $\mathcal{A}:\mathcal{B}_1\to \mathcal{B}_2$ defined by 
	\begin{align*}
		\mathcal{A}(y,y_\Gamma,z,z_\Gamma,v):=&(L(y)+|y|^{p-1}y-f-\mathbbm{1}_\omega v,L_\Gamma (y,y_\Gamma)+|y_\Gamma|^{q-1}y_\Gamma-f_\Gamma,\\
		&L^*(z)+p|y|^{p-2}yz-\mathbbm{1}_{\mathcal{O}}y, L_\Gamma^*(z,z_\Gamma)+q|y_\Gamma|^{q-2}y_\Gamma z_\Gamma -\nabla_\Gamma \cdot (\mathbbm{1}_{\mathcal{G}} \nabla_\Gamma y_\Gamma),\\
		&y(\cdot,0),y_\Gamma(\cdot,0),z(\cdot,T),z_\Gamma(\cdot,T)).
	\end{align*}	
	
	In addition, we choose $b_1=(0,0,0,0,0)$ and $b_2=(0,0,0,0,0,0)$. Then, to apply the Proposition \ref{proposition:null:control:C}, it is sufficient to check the following two assertions:
	\begin{itemize}
		\item[(a)] The mapping $\mathcal{A}'(0,0,0,0,0):\mathcal{B}_1\to \mathcal{B}_2$ is surjective,
		\item[(b)] $\mathcal{A}$ is an operator of class $C^1$ from $\mathcal{B}_1$ to $\mathcal{B}_2$.  
	\end{itemize} 
	
	For each $(y,y_\Gamma,z,z_\Gamma,v),(\hat{y},\hat{y}_\Gamma,\hat{z},\hat{z}_\Gamma,\hat{v})\in \mathcal{B}_1$, we see that 
	\begin{align*}
	%\label{computation:A:Frechet}
	\begin{split}
		&[\mathcal{A}'(y,y_\Gamma,z,z_\Gamma,v)](\hat{y},\hat{y}_\Gamma,\hat{z},\hat{z}_\Gamma,\hat{v})\\
		=&(L(\hat{y})+py^{p-1}\hat{y}-\mathbbm{1}_\omega \hat{v},L_\Gamma (\hat{y},\hat{y}_\Gamma)+qy_\Gamma^{q-1}\hat{y}_\Gamma,L^*(\hat{z})+p(p-1)y^{p-2}\hat{y}z+py^{p-1}\hat{z}-\mathbbm{1}_{\mathcal{O}} \hat{y},\\
		 &L_\Gamma^*(\hat{z},\hat{z}_\Gamma)+q(q-1)y_\Gamma^{q-2}\hat{y}_\Gamma z_\Gamma + qy_\Gamma^{q-1}\hat{z}_\Gamma -\nabla_\Gamma \cdot (\mathbbm{1}_{\mathcal{G}}\nabla_\Gamma \hat{y}_\Gamma),\hat{y}(\cdot,0),\hat{y}_\Gamma(\cdot,0),\hat{z}(\cdot,T),\hat{z}_\Gamma(\cdot,T))
	\end{split}
	\end{align*}
	
	In particular, for $(y,y_\Gamma,z,z_\Gamma,v)=(0,0,0,0,0)$, we have
	\begin{align*}
		&\mathcal{A}'(0,0,0,0,0)(\hat{y},\hat{y}_\Gamma,\hat{z},\hat{z}_\Gamma,\hat{v})\\
		=& (L(\hat{y})-\mathbbm{1}_\omega \hat{v},L_\Gamma(\hat{y},\hat{y}_\Gamma),L^*(\hat{z})-\mathbbm{1}_\mathcal{O} \hat{y},L_\Gamma^*(\hat{z},\hat{z}_\Gamma)-\nabla_\Gamma (\mathbbm{1}_{\mathcal{G}} \nabla_\Gamma \hat{y}_\Gamma),\\
		&\hat{y}(\cdot,0),\hat{y}_\Gamma (\cdot,0),\hat{z}(\cdot,T),\hat{z}_\Gamma(\cdot,T)).
	\end{align*} 
	
	Then, assertion (a) follows from the null controllability property of the linear system in Proposition \ref{proposition:null:control:C}.
	
	To check (b), we need to prove the following assertions: 
	\begin{itemize}
		\item[$(i)$] $e^{s\beta^*}(\gamma^*)^{-3/2}(y^{p-1}\tilde{y},y_\Gamma^{q-1}\tilde{y}_\Gamma)\in L^2(0,T;\mathbb{L}^2)$,
		\item[$(ii)$] $e^{s\beta^*}(\gamma^*)^{-2}(y^{p-2}\tilde{y}z+y^{p-1}\tilde{z},y_\Gamma^{q-2}\tilde{y}_\Gamma,z_\Gamma+y_\Gamma^{q-1}\tilde{z}_\Gamma)\in L^2(0,T;\mathbb{L}^2)$. 
	\end{itemize}
	
	We introduce the temporary notation $E(X):=C^0([0,T];H^1(X)\cap L^2(0,T;H^2(X))$, where $X=\Omega$ or $X=\Gamma$. Let us prove the assertion $(i)$. Firstly, by H\"older's inequality we have  
	\begin{align*}
		&\|e^{s\beta^*}(\gamma^*)^{-3/2} y^{p-1}\tilde{y}\|_{L^2(Q)}\\
		\leq & \|e^{\frac{3}{4}s\beta^*} (\gamma^*)^{-9/8}y^{p-1}\|_{L^{8/3}(Q)}\cdot \|e^{\frac{s\beta^*}{4}} (\gamma^*)^{-3/8} \tilde{y}\|_{L^8(Q)}\\
		\leq & \left\|\exp\left(\frac{3s\beta^*}{4(p-1)} \right)(\gamma^*)^{-\frac{8}{9(p-1)}}y\right\|_{L^{\frac{8}{3}(p-1)}(Q)}^{p-1}\cdot \|e^{\frac{s\beta^*}{4} (\gamma^*)^{-3/8}} \tilde{y}\|_{L^8(Q)}
	\end{align*}
	From the continuous embeddings $\mathbb{D}\hookrightarrow L^{8}(0,T;\mathbb{L}^8)$, we deduce that 
	\begin{align*}
		&\|e^{s\beta^*}(\gamma^*)^{-3/2} (y^{p-1}\tilde{y},y_\Gamma^{q-1}\tilde{y}_\Gamma)\|_{L^2(0,T;\mathbb{L}^2)}\\
		\leq & C \left( \left\| \exp\left(\dfrac{3s\beta^*}{4(p-1)}\right)(\gamma^*)^{-\frac{9}{8(p-1)}} y \right\|_{E(\Omega)}^{p-1} 
		+\left\|\exp \left(\dfrac{3s\beta^*}{4(q-1)}\right)(\gamma^*)^{-\frac{9}{8(q-1)}}y_\Gamma  \right\|_{E(\Gamma)}^{q-1}\right)\\
		&\cdot \|(\tilde{y},\tilde{y}_\Gamma)\|_{\mathcal{C}_{p,q}},
	\end{align*}
	which proves $(i)$. In the same manner, it is easy to see that 
	\begin{align}
		\label{ineq:ii:01}
		\begin{split} 
		&\left\| e^{s\beta^*}(\gamma^*)^{-2}(y^{p-1}\tilde{z},y_\Gamma^{q-1}\tilde{z}_\Gamma) \right\|_{L^2(0,T;\mathbb{L}^2)}\\
		\leq & C\left( \left\|\exp\left(\frac{3s\beta^*}{4(p-1)} \right) (\gamma^*)^{-\frac{3}{2(p-1)}} y \right\|_{E(\Omega)}^{p-1} + \left\| \exp\left(\frac{3s\beta^*}{4(q-1)} \right) (\gamma^*)^{-\frac{3}{2(q-1)}}y_\Gamma \right\|_{E(\Gamma)}^{q-1} \right)\\
		&\cdot \|(\tilde{z},\tilde{z}_\Gamma)\|_{\mathcal{C}_{p,q}} 
		\end{split}  
	\end{align}
	
	On the other hand, we notice that, by H\"older's inequality we have 
	\begin{align*}
		&\|e^{s\beta^*}(\gamma^*)^{-2}y^{p-2}\tilde{y}z\|_{L^2(Q)}\\
		\leq & \|e^{\frac{1}{2}s\beta^*}(\gamma^*)^{-1}y^{p-2}\|_{L^4(Q)}\cdot \|e^{\frac{1}{4}}(\gamma^*)^{-1/2}\tilde{y} \|_{L^8(Q)}\cdot \|e^{\frac{1}{4}s\beta^*}(\gamma^*)^{-1/2}z\|_{L^8(Q)}\\
		\leq & C \left\|\exp\left(\dfrac{s\beta^*}{2(p-2)} \right)(\gamma^*)^{-\frac{1}{p-2}}y  \right\|_{E(\Omega)}^{p-2}\|e^{\frac{s\beta^*}{4}(\gamma^*)^{-1/2}}\tilde{y}\|_{E(\Omega)}\cdot \|e^{\frac{s\beta^*}{4}}(\gamma^*)^{-1/2}z\|_{E(\Omega)}
	\end{align*}
	
	Then, we conclude that
	\begin{align}
		\label{ineq:ii:02}
		\begin{split}
			&\|e^{s\beta^*}(\gamma^*)^{-2}(y^{p-2}\tilde{y}z,y_\Gamma^{q-2}\tilde{y}_\Gamma z_\Gamma) \|_{L^2(0,T;\mathbb{L}^2)}\\
			\leq &C \left(\left\| \exp \left(\dfrac{s\beta^*}{2(p-2)} \right) (\gamma^*)^{-\frac{1}{p-2}}y \right\|_{E(\Omega)}^{p-2} + \left\| \exp\left(\dfrac{s\beta^*}{2(q-2)}\right)(\gamma^*)^{-\frac{1}{q-2}} y_\Gamma  \right\|_{E(\Gamma)}^{q-2} \right)\\
			&\cdot \|(\tilde{y},\tilde{y}_\Gamma)\|_{\mathcal{C}_{p,q}}\cdot \|(z,z_\Gamma)\|_{\mathcal{C}_{p,q}}.
		\end{split}
	\end{align}
	
	From \eqref{ineq:ii:01} and \eqref{ineq:ii:02}, we proved the assertion $(ii)$. 

This proves that $\mathcal{A}\in C^1(\mathcal{B}_1,\mathcal{B}_2)$. Finally, the Theorem \ref{thm:Alexseev} guarantees the existence of $\delta>0$ such that
	\begin{align*}
		\|e^{s\beta^*}(\gamma^*)^{-3/2}(f,f_\Gamma)\|_{L^2(0,T;\mathbb{L}^2)}\leq \delta,
	\end{align*}
	one can find a couple $(y,y_\Gamma,z,z_\Gamma,v)\in \mathcal{C}_{p,q}$. In particular, this means that the control $v$ drives the associated trajectory $(y,y_\Gamma,z,z_\Gamma)$ to the state
	\begin{align*}
		z(\cdot,0)=0\text{ in }\Omega\text{ and }z_\Gamma(\cdot,0)=0\text{ on }\Gamma.
	\end{align*}  
	
	This ends the proof of the Theorem \ref{thm:nonlinear:controllability:coupled:system}.
\end{proof}

%%%%%%%%%%%%%%%%%%%%%%%%%%%%%%%%%%%%%%%%%%%%%%%%%%%%%
\section{Further comments}
\label{section:further:comments}
In this paper, we have considered an insensitizing control problem that involves tangential gradient terms for the heat equation with dynamic boundary conditions. Nevertheless, there are many interesting and challenging problems related to the insensitizing control problems for such systems. Some of them are mentioned below.
%%%%%%%%%%%%%%%%%%%%%%%%%%%%%%%%%%%%%%%%%%%%%%%%%%%%
\subsection{On the initial data}

In our main results, we have chosen a particular initial data on \eqref{eq:intro:01} to insensitize the functional $\Phi$, namely $(y^0,y_\Gamma^0)=(0,0)$. As we said before, a study of possible initial conditions for which $\Phi$ is insensitive can also be considered following the steps of \cite{deteresa2009Identification}. However, with a slight modification on the functional $\Phi$, it is possible to establish an insensitizing control result for any initial data in $\mathbb{L}^2$. More precisely, for fixed $t_0>0$ and $\mathcal{O}\subseteq \Omega$ and $\mathcal{G}\subseteq \Gamma$ being two nonempty subsets, we define the functional
\begin{align*}
	\Phi_1(y,y_\Gamma)=\dfrac{1}{2}\int_{t_0}^T\int_{\mathcal{O}} |y(x,t;\tau_1,\tau_2,v)|^2\,dx\,dt + \dfrac{1}{2}\int_{t_0}^T\int_{\mathcal{G}} |\nabla y_\Gamma(x,t;\tau_1,\tau_2,v)|^2\,dS\,dt. 
\end{align*} 

In this case, the notion of insensitizing controls for $\Phi_1$ is analogous to the Definition \ref{def:Phi:insen}. Then we can establish an analogous result as in Theorem \ref{Thm:main:result}.
% Then, we have the following result:
%\begin{corollary}
%	Assume the same conditions as in Theorem \ref{Thm:main:result}. Then, there exist two constants $M_1>0$ and $\delta>0$ such that for any pair $(f,f_\Gamma)\in L^2(0,T;\mathbb{L}^2)$ verifying
%	\begin{align*}
%		\|e^{\frac{M_1}{t}}(f,f_\Gamma)\|_{L^2(0,T;\mathbb{L}^2)}\leq \delta,\quad\text{ and }(f,f_\Gamma)=(0,0)\text{ for all}t\in (0,t_0),
%	\end{align*}
%	and for all $(y^0,y_\Gamma^0)\in \mathbb{L}^2$, one can find a control $v\in L^2(Q_\omega)$ in \eqref{eq:intro:01} that insensitizes the functional $\Phi_1$.
%\end{corollary}

%%%%%%%%%%%%%%%%%%%%%%%%%%%%%%%%%%%%%%%%%%%%%%%%%
\subsection{A sentinel involving the gradient and the tangential gradient}

We can consider a situation where the sentinel depends on the gradient in the bulk and the tangential gradient on the boundary. More precisely, given $\mathcal{O}\subset \Omega$ and $\mathcal{G}\subset \Gamma$ be two nonempty subsets, we consider the quadratic functional $\Psi$ defined by 
\begin{align*}
	\Psi(y,y_\Gamma):=\dfrac{1}{2}\iint_{Q_{\mathcal{O}}} |\nabla y(x,t;\tau_1,\tau_2,u)|^2\,dx\,dt + \dfrac{1}{2}\iint_{\Sigma_{\mathcal{G}}}|\nabla_\Gamma y_\Gamma(x,t;\tau_1,\tau_2,u)|^2\,dS\,dt,
\end{align*}
with $(y,y_\Gamma)$ being the corresponding solution of \eqref{eq:intro:01}. In this case, the insensitivity notion is the same as in Definition \ref{def:Phi:insen} applied to $\Psi$. Besides, this notion is equivalently to prove a controllability result for the coupled system 
\begin{align}
	\label{Coupled:insen:02:y}
	\begin{cases}
		L(y)+|y|^{p-1}y=f+\mathbbm{1}_\omega v&\text{ in }Q,\\
		L_\Gamma(y,y_\Gamma)+|y_\Gamma|^{q-1}y_\Gamma=f_\Gamma&\text{ on }\Sigma,\\
		y\big|_\Gamma=y_\Gamma &\text{ on }\Sigma,\\
		(y(\cdot,0),y_\Gamma(\cdot,0))=(y^0,y_\Gamma^0)&\text{ in }\Omega\times \Gamma.
	\end{cases}
\end{align}
\begin{align}
	\label{Coupled:insen:02:z}
	\begin{cases}
		L^*(z)+p|y|^{p-2}yz=\nabla\cdot (\mathbbm{1}_{\mathcal{O}}\nabla y)&\text{ in }Q,\\
		L_\Gamma^*(z,z_\Gamma) + q|y_\Gamma|^{q-2}y_\Gamma z_\Gamma=\nabla_\Gamma \cdot (\mathbbm{1}_{\mathcal{G}} \nabla_\Gamma y_\Gamma )&\text{ on }\Sigma,\\
		z\big|_\Gamma =z_\Gamma&\text{ on }\Sigma,\\
		(z(\cdot,T),z_\Gamma(\cdot,T))=(0,0)&\text{ in }\Omega\times \Gamma.
	\end{cases}
\end{align}

More precisely, we are interested in finding conditions to guarantee the existence of a control $v\in L^2(Q_\omega)$ such that the associated coupled system \eqref{Coupled:insen:02:y}-\eqref{Coupled:insen:02:z} satisfies
\begin{align*}
	z(\cdot,0)=0\text{ in }\Omega\text{ and }z_\Gamma(\cdot,0)=0\text{ on }\Gamma.
\end{align*}

Then, following the steps in \cite{guerrero2007null}, we are interested in deriving a Carleman estimate for the equations satisfied by $(\zeta,\zeta_\Gamma)=(\Delta \varphi,\Delta_\Gamma \varphi)$. However, the relation of these terms on the boundary is given by the following identity: 
\begin{align*}
	\Delta \varphi=\Delta_\Gamma \varphi_\Gamma+H\pnu \varphi +\partial_{\nu \nu} \varphi \quad \forall\, (\varphi,\varphi_\Gamma)\in H^3(\Omega)\times H^2(\Gamma)\text{ with }\varphi\big|_\Gamma=\varphi_\Gamma, 
\end{align*}
where $H$ is the mean curvature of $\Gamma$ and $\partial_{\nu\nu} \varphi$ is the second normal derivative, see for instance \cite[Proposition 5.4.12]{henrot2018Shape}. This means that we need to perform Carleman estimates where the side condition $\zeta\big|_\Gamma=\zeta_\Gamma$ is not satisfied. This seems to be a challenging problem and it will be considered in a forthcoming paper.

%To the best of our knowledge, these estimates are not available currently for parabolic equations with dynamic boundary conditions.

\section*{Acknowledgments}

Maur\'icio Santos has been funded by CNPq, CAPES and MathAmSud SCIPinPDEs. Nicol\'as Carre\~{n}o has been funded by ANID FONDECYT 1211292. Roberto Morales has been funded under the Grant QUALIFICA by Junta de Andaluc\'ia grant number QUAL21 005 USE. 

%%%%%%%%%%%%%%%%%%%%%%%%%%%%%%%%%%%%%%%%%%%%%%%%%%%%%%

  \bibliography{biblio.bib}
  \bibliographystyle{plain}

\end{document}